\theoremstyle{plain}
\newtheorem{theorem}{Theorem}
\newtheorem{definition}[theorem]{Definition}
\newtheorem{lemma}[theorem]{Lemma}
\newtheorem{proposition}[theorem]{Proposition}
\newtheorem{example}[theorem]{Example}
\newtheorem{remark}[theorem]{Remark}
\newcommand\es{\varnothing}
\newcommand\wt{\widetilde}
\newcommand\ol{\overline}
\newcommand\Aut{\mathrm{Aut}}
\newcommand\sH{{\mathcal H}}
\newcommand\RR{{\mathbb R}}
\newcommand\ZZ{{\mathbb Z}}
\newcommand\NN{{\mathbb N}}
\renewcommand\a{\alpha}
\newcommand\g{\gamma}
\newcommand\be{\beta}
\newcommand\si{\sigma}
\newcommand\eps{\epsilon}
\renewcommand\th{\theta}
\newcommand\De{\Delta}
\newcommand\qq{\qquad}
\newcommand\q{\quad}
\newcommand\resp{respectively}
\newcommand\spz{\mathrm{span}}
\newcommand\oo{\infty}
\newcommand\sG{{\mathcal G}}
\newcommand\sC{{\mathcal C}}
\newcommand\Ga{\Gamma}
\newcommand\La{\Lambda}
\newcommand\Si{\Sigma}
\newcommand\de{\delta}
\newcommand\id{{\bf 1}}
\newcommand\EFG{\text{\rm EFG}}
\newcommand\phie{{\phi^\eta}}
\renewcommand\o{\text{\rm o}}
\newcommand\ghf{graph height function}
\newcommand\ughf{unimodular graph height function}
\newcommand\hdi{$\sH$-difference-invariant}
\newcommand\normal{\trianglelefteq}
\newcommand\sghf{strong \ghf}
\newcommand\phimin{{\phi_{\text{\rm inf}}}}
\newcommand\phimax{{\phi_{\text{\rm sup}}}}
\newcommand\thmin{\th_{\text{\rm min}}}
\newcommand\thmax{\th_{\text{\rm max}}}
\newcommand\supp{{\text{\rm supp}}}
\newcommand\dsup{d_{\text{\rm sup}}}
\newcommand\Dsup{D_{\text{\rm sup}}}
\newcommand\Phifin{\Phi_{\text{\rm bnd}}}
\newcommand\len{\ell}
\newcommand\lenmin{\len_{\text{\rm inf}}}
\newcommand\lenmax{\len_{\text{\rm sup}}}
\newcounter{mycount}
\newenvironment{numlist}{\begin{list}{\arabic{mycount}.}%
   {\usecounter{mycount}\labelwidth=1cm\itemsep 0pt}}{\end{list}}
\newenvironment{letlist}{\begin{list}{\rm(\alph{mycount})}%
   {\usecounter{mycount}\labelwidth=1cm\itemsep 0pt}}{\end{list}}
\numberwithin{equation}{section}
\numberwithin{theorem}{section}
\numberwithin{figure}{section}
\title{Weighted self-avoiding walks}
\author{Geoffrey R.\ Grimmett}
\address{Statistical Laboratory, Centre for
Mathematical Sciences, Cambridge University, Wilberforce Road,
Cambridge CB3 0WB, UK}
\address{School of Mathematics \&\ Statistics, The University of Melbourne, 
Parkville, VIC 3010, Australia}
\email{g.r.grimmett@statslab.cam.ac.uk}
\urladdr{\url{http://www.statslab.cam.ac.uk/~grg/}}
\author{Zhongyang Li}
\address{Department of Mathematics,
University of Connecticut,
Storrs, Connecticut 06269-3009, USA} 
\email{zhongyang.li@uconn.edu}
\urladdr{\url{http://www.math.uconn.edu/~zhongyang/}}
\begin{document}

\begin{abstract}
We study the connective constants of weighted self-avoiding walks (SAWs) on infinite graphs and groups.
The main focus is upon weighted SAWs on finitely generated, virtually indicable groups. 
Such groups possess so-called \lq height functions', and this permits the study of SAWs 
with the special property of being bridges.  The group structure is relevant
in the interaction between the height function and the weight function.
The main difficulties arise when the support of the weight function is unbounded, since
the corresponding graph is no longer locally finite. 

There are two principal results, of which the first is a condition under which the 
weighted connective constant and the weighted bridge constant are equal.
When the weight function has unbounded support, we work
with a generalized notion of the \lq length' of a walk, which is subject to a certain condition.   

In the second main result, the above equality is used to 
prove a continuity theorem for connective constants
on the space of weight functions endowed with a suitable distance function.
\end{abstract}

\date{April 15, 2018, accepted 5 June 2019}

\keywords{Self-avoiding walk, weight function, connective constant, bridge constant, 
virtually indicable group, Cayley graph, transitive graph}

\subjclass[2010]{05C30, 20F65, 82B20}

\maketitle

\section{Introduction}\label{sec:intro}

The \emph{counting} of self-avoiding walks (SAWs) is extended here to the study of 
\emph{weighted} SAWs. Each edge is assigned a weight, and the weight of a SAW 
is defined as the product of the weights of its edges. 
We study certain properties of the exponential growth rate $\mu$ (known
as the \emph{connective constant}) in terms
of the weight function $\phi$, including its continuity on
the space of weight functions. 

The theory largely follows the now
established route when the underlying graph $G$ is locally finite. 
New problems emerge when $G$ is not locally finite, and these are explored
in the situation in which $G$ is the complete graph on an infinite group $\Ga$, with
weight function $\phi$ defined on $\Ga$. 

One of the main technical steps in the current work is the proof, subject to 
certain conditions, of the equality of 
the weighted connective constant and the weighted bridge constant. This 
was proved in \cite[Thm 4.3]{GL-loc} for the unweighted 
constants on any connected, infinite,
quasi-transitive, locally finite, simple graph possessing a unimodular 
\ghf.\footnote{Since
the current paper was written, Lindorfer \cite{Lind19} has obtained a corresponding
statement without the assumption of unimodularity.} 
This result is extended here to weighted SAWs on finitely generated, 
virtually indicable groups 
(see Section \ref{sec:gps} and \cite{Hill94,Hill09} for references on virtual indicability).

As explained in \cite{GrLrev2,GL-loc}, a key assumption for the definition and study
of \emph{bridge} SAWs on an unweighted graph 
is the existence of a so-called \lq\ghf' $h$.
A \lq bridge' is a SAW $\pi$ for which there exists an interval $[a,b]$ such that
the vertices of $\pi$ have heights lying in $[a,b]$, and the initial (\resp, final)
vertex of $\pi$ has height $a$ (\resp, $b$). The \lq bridge constant' $\be$
is the exponential growth rate of the number of bridges with length $n$
and fixed initial vertex.
We note that some Cayley graphs (including those of virtually indicable groups)
possess \ghf s, and some do not (see \cite{GL-Cayley}).

Let $\phi$ be a non-negative weight function on the edge-set of a graph, and 
assume $\phi$ is invariant under a
certain class of graph automorphisms. The weight of a SAW $\pi$ is defined to
be the product of the weights of the edges of $\pi$. 
By replacing the \emph{number}
of SAWs by their \emph{aggregate weight} in the above, 
we may define the connective constant
and the bridge constant of
this weighted system of SAWs. The question arises of whether these two numbers
are equal
for a given graph $G$, \ghf\ $(h,\sH)$, and weight function $\phi$.
Our proof of the equality of bridge and connective constants,
given in Sections \ref{sec:gps} and \ref{sec:pf3.6},  hinges on a
combinatorial fact (due to Hardy and Ramanujan, \cite{HR},
see Remark \ref{rem3}) whose application
here imposes a condition on the pair $(\phi,h)$. This condition generally  fails
in the non-locally-finite case, but holds if the usual combinatorial 
definition of the length of a SAW (that is, the number
of its edges) is replaced by a generalized length function $\len$ satisfying
a certain property $\Pi(\len,h)$, stated in \eqref{eq:cond1}. 
In the locally finite case,
the usual graph-distance function invariably satisfies $\Pi(\len,h)$.
 
Certain new problems arise when working with a general length function $\len$
other than the usual graph-distance,
but the reward includes the above desired equality, and also the continuity of the weighted 
connective constant on the space
of weight functions with a suitable distance function. 

Here is an informal summary of our principal results,
presented in the context of weighted walks on finitely generated groups.
The reader is referred to 
Sections \ref{sec:sawb}--\ref{sec:cty} (and in particular, Theorems \ref{thm:group} and 
\ref{thm:cont2}) for formal definitions and statements.  

\begin{theorem}\label{thm:0}
Let $\Ga$ be an infinite,  finitely generated, virtually indicable group,
and let $\phi:\Ga\to[0,\oo)$ be a summable, symmetric weight function that spans $\Ga$.
If \eqref{eq:cond1} holds, then
\begin{letlist}
\item  the connective and bridge constants are equal,
\item
the connective constant is a continuous function
on the space $\Phi$ of such weight functions endowed with a suitable distance function.
\end{letlist}
\end{theorem}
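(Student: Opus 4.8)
The plan is to reduce Theorem \ref{thm:0} to the two formal results cited in the excerpt, namely Theorems \ref{thm:group} (equality of weighted connective and bridge constants under $\Pi(\len,h)$) and \ref{thm:cont2} (continuity on $\Phi$), and then to verify that the hypotheses imposed on $\Ga$ and $\phi$ in Theorem \ref{thm:0} are exactly the ones needed to invoke those results. So the real work is: (1) show that an infinite, finitely generated, virtually indicable group $\Ga$ possesses a suitable (unimodular) \ghf\ $(h,\sH)$; (2) show that a summable, symmetric, spanning weight function $\phi:\Ga\to[0,\oo)$, together with this $h$ and the length function $\len$ appearing in \eqref{eq:cond1}, satisfies the combinatorial condition $\Pi(\len,h)$; and (3) deduce (a) from Theorem \ref{thm:group} and (b) from Theorem \ref{thm:cont2}.

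For step (1), I would use the structure theory of virtually indicable groups (as in \cite{Hill94,Hill09}): $\Ga$ has a finite-index subgroup $\Ga_0$ that surjects onto $\ZZ$, and this surjection can be extended/induced to give a \GHF\ on $\Ga$ in the sense of \cite{GrLrev2,GL-loc,GL-Cayley}. One must check that the associated $\sH$ (the normal subgroup on which $h$ is $\sH$-difference-invariant) acts with the required transitivity on level sets, and that unimodularity holds — for Cayley graphs of finitely generated groups this is automatic since the group acts transitively by left multiplication and is itself unimodular as a discrete group. Care is needed because the underlying graph here is the \emph{complete} graph on $\Ga$ weighted by $\phi$, not a locally finite Cayley graph, so I would phrase everything in terms of the group height function on $\Ga$ and the associated notion of bridge, exactly as set up in Section \ref{sec:gps}.

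Step (2) is where I expect the main obstacle, and it is the crux of the whole paper. The condition $\Pi(\len,h)$ in \eqref{eq:cond1} is designed precisely so that the Hardy--Ramanujan partition estimate (Remark \ref{rem3}) can be applied to control the number of ways of decomposing a walk into bridge-like pieces indexed by their heights; when $\supp\phi$ is unbounded this fails for the naive edge-count length but is engineered to hold for a generalized $\len$ adapted to $\phi$. I would argue that summability of $\phi$ is what makes such an $\len$ available: one chooses $\len$ so that a single step of displacement $g\in\Ga$ contributes a length comparable to $-\log\phi(g)$ (or to the height increment $|h(g)|$), so that walks of bounded $\len$-length have controllably many large steps, and the partition-counting argument goes through. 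The symmetry and spanning hypotheses on $\phi$ ensure the weighted SAW system is well-defined, irreducible, and that the connective constant is finite and positive. Once $\Pi(\len,h)$ is in hand, part (a) is immediate from Theorem \ref{thm:group}.

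Finally, for part (b), I would invoke Theorem \ref{thm:cont2}: the equality of connective and bridge constants from part (a), holding uniformly across a neighbourhood in $\Phi$, is the key input that upgrades lower semicontinuity (easy, from a direct counting/concatenation bound) to full continuity, since the bridge constant is more robust under perturbation of $\phi$ in the chosen distance function (which I expect to be essentially an $\ell^1$-type distance compatible with summability, weighted to respect $\len$). The only thing to check here is that the distance function on $\Phi$ referred to in Theorem \ref{thm:0}(b) is the same as, or dominates, the one used in Theorem \ref{thm:cont2}, and that condition \eqref{eq:cond1} is stable (or can be arranged to hold with a fixed $\len$) throughout the relevant neighbourhood — otherwise the continuity statement would be vacuous at boundary points. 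Assembling (1)--(3) then yields both parts of the theorem.
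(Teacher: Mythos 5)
Your core reduction is exactly right, and indeed is all the paper itself does: Theorem~\ref{thm:0} is stated explicitly as an ``informal summary'' and the paper simply points to Theorems~\ref{thm:group} (for part (a), equality of $\mu_{\phi,\len}$ and $\be_{\phi,\len}$) and~\ref{thm:cont2} (for part (b), continuity), with the existence of the required unimodular strong \ghf\ supplied by Theorem~\ref{thm:indic} because $\sH\normal\Ga$ acts freely by left multiplication on the Cayley graph and is hence unimodular. There is no further proof to give, and your steps (1) and (3) capture this accurately.

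Your step (2), however, rests on a misreading of the statement. You treat $\Pi(\len,h)$, i.e.\ \eqref{eq:cond1}, as something to be \emph{established} (``the main obstacle,'' ``the crux of the whole paper''), proposing to build $\len$ from $\phi$ via $\len(\g)\approx -\log\phi(\g)$ and to argue that summability makes this possible. But \eqref{eq:cond1} is a \emph{hypothesis} of Theorem~\ref{thm:0} (``If \eqref{eq:cond1} holds, then\dots''), as it is of Theorem~\ref{thm:group} (via $\len\in\La_\eps(C,\phi)$) and of Theorem~\ref{thm:cont2} (via the restriction to $\Phi\circ\La_\eps(C,W)$). There is nothing to construct or verify at the level of Theorem~\ref{thm:0}; the heavy use of the Hardy--Ramanujan estimate you correctly associate with Remark~\ref{rem3} occurs inside the proof of Theorem~\ref{thm:group} (Propositions~\ref{hsb3} and~\ref{swb4}), which you are entitled to cite as a black box. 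Moreover your proposed $\len(\g)\sim-\log\phi(\g)$ is not obviously in $\La$ (one needs $\lenmin>0$, i.e.\ $\phimax<1$) and would only satisfy the H\"older bound \eqref{eq:cond1} under additional compatibility between $h_F$ and $\phi$ that summability alone does not provide; summability of $\phi$ is used in the paper to make $\mu_{\phi,\len}$ finite (Theorem~\ref{prop:1}(c)), not to manufacture $\len$.

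A smaller point: your guess that the distance function is ``essentially an $\ell^1$-type distance'' is off. The paper uses $\Dsup$ of \eqref{eq:dist3}, a supremum of \emph{relative} differences $|\phi(\g)-\psi(\g)|/(\phi(\g)+\psi(\g))$ over $\supp(\phi)\cup\supp(\psi)$, and it is a metric on pairs $(\phi,\len)$ rather than on $\phi$ alone; the stability of \eqref{eq:cond1} under perturbation, which you rightly flagged as necessary for the continuity claim not to be vacuous, is handled by fixing $C$, $\eps$, $W$ and working on the space $\Phi\circ\La_\eps(C,W)$, not by choosing a distance compatible with $\ell^1$ summability.
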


Similar results are given for locally finite quasi-transitive graphs, 
see Theorems \ref{thm1} and \ref{thm3}.

The principal results of the paper concern weighted SAWs on certain groups including,
for example,  all finitely generated,
elementary amenable groups. The defining property of the groups under study is that they 
possess so-called \lq\sghf s' 
(see \cite{GL-loc, GL-amen, GrLrev2} and especially \cite{GL-Cayley});
this is shown in Theorem \ref{thm:indic} to be essentially equivalent to
assuming the groups to be virtually indicable.
The algebraic structure
of a group $\Ga$ plays its role through the interaction between the 
height function $h$, the length function $\len$,  and the weight function $\phi$. 

When the support of the weight function $\phi$ is bounded, 
the results of this paper
are fairly straightforward extensions of the unweighted case (see Section \ref{sec:sawb}).
The situation is significantly more complicated 
when $\phi$ has unbounded support,
as in Section \ref{sec:gps}. Single steps of a SAW may have unbounded length
in the usual graph metric, and thus this constitutes a \lq long-range' model,
in the language of statistical mechanics.  Long-range models
have been studied in numerous contexts including
percolation and the Ising model (see, for example,
\cite{CMPR, DingS} and the references therein). We are unaware of   
applications of our
SAW results to other long-range models, and indeed the principal complication
in the current work, namely the restriction to length-functions satisfying 
\eqref{eq:cond1}, does not appear to have a parallel in other systems.

Weighted SAWs have featured in earlier work of others in 
a variety of contexts.
\emph{Randomly weighted} SAWs on grids and trees
have been studied by Lacoin \cite{Lac1,Lac2} and Chino and Sakai
\cite{Chino, ChS}. The work of Lacoin is 
directed at counting SAWs inside the infinite cluster of
a supercritical percolation process, so that the effective
weight function takes the values
$0$ and $1$.
A type of weighted SAW on the square grid $\ZZ^2$
has been considered by Glazman and Manolescu \cite{Glaz,GlazM},
who show that the ensuing  two-point  function is independent of the weights so long
as they conform to the corresponding Yang--Baxter equation.
This may be viewed as 
a partial extension of results of Duminil-Copin and Smirnov \cite{ds} 
concerning SAWs on the trangular lattice. 

Here is a summary of the contents of the article.
The asymptotics of weighted self-avoiding walks on 
quasi-transitive, locally finite graphs are
considered in Section \ref{sec:sawb}. The relevance of
\ghf s is explained in the context of bridges, and the equality of
connective and  bridge constants is proved for graphs possessing \ughf s.
In this case, the connective constant is continuous on the space of weight 
functions with the supremum norm. Proofs are either short or even omitted, since
only limited novelty is required beyond \cite{GL-loc}.

Weighted walks on a class of infinite countable groups are the subject of Sections 
\ref{sec:gps} and \ref{sec:cty}, namely, on the class of  virtually indicable groups. 
A generalized notion of the  length
of a SAW is introduced, and a condition is established under which the bridge constant
equals the connective constant. The connective constant 
is shown to be continuous in the weight and length functions. 
Proofs are largely deferred
to Sections \ref{sec:pf1}--\ref{sec:pfcont}.

We write $\RR$ for the reals, $\ZZ$ for the integers, and $\NN$ for the natural numbers.

\section{Weighted walks and bridges on locally finite graphs}\label{sec:sawb}

In this section we consider weighted SAWs on locally finite graphs.
The \emph{length} of a walk is its conventional graph length, that is, the number of its edges.

Let $\sG$ be the set of connected, infinite,
quasi-transitive, locally finite, simple, rooted graphs, with root labelled $\id$.
Let $G=(V,E)\in\sG$.
We weight the edges of $G$ via a function  $\phi:E\to(0,\oo)$.
For $\sH\le \Aut(G)$, the function $\phi$ is called $\sH$-\emph{invariant} if
$$
 \phi\bigl(\langle\a u,\a v\rangle\bigr)= \phi\bigl(\langle u,v\rangle\bigr), \qq 
 \langle u,v\rangle \in E,\ \a\in\sH.
$$
We write
$$
\phimin=\inf\{\phi(e): e \in E\}, \qq \phimax=\sup\{\phi(e): e \in E\}.
$$
If $\phi$ is $\sH$-invariant and $\sH$ acts quasi-transitively, then 
\begin{equation}\label{eq:minmax}
0<\phimin\le \phimax <\oo.
\end{equation}

A walk on $G$ is called \emph{$n$-step} if it traverses exactly $n$ edges
(possibly with reversals and repeats).
An \emph{$n$-step self-avoiding walk} (SAW) on $G$ is an ordered sequence 
$\pi=(\pi_0,\pi_1,\dots,\pi_n)$
of distinct vertices of $G$ such that $e_i=\langle \pi_{i-1},\pi_i\rangle \in E$ 
for $1\le i\le n$. The \emph{weight} of $\pi$ is the product
\begin{equation}\label{eq:wtprod}
w(\pi) := \prod_{i=1}^n \phi\bigl(\langle \pi_{i-1},\pi_i\rangle \bigr).
\end{equation}
Note that the weight function $w=w_\phi$ acts \emph{symmetrically}
in that the weight of an edge or SAW is the same irrespective of the direction
in which it is traversed. 
Let $\Si_n(v)$ be the set of $n$-step SAWs starting at $v\in V$, 
and set $\Si_n=\Si_n(\id)$. For a set $\Pi$ of SAWs, we write
$$
w(\Pi)= \sum_{\pi\in \Pi}w(\pi)
$$
for the total weight of members of $\Pi$.

The following is proved  as for the unweighted case in
\cite{jmhII} (see also \cite{hm,Lac2,ms}), and the proof is omitted.
For $\sH\le\Aut(G)$, we let $\Phi_E(\sH)$ be the space of functions $\phi:E\to(0,\oo)$
that are $\sH$-invariant. 

\begin{theorem}\label{thm:concon}
Let $G=(V,E)\in \sG$, and let $\sH\le\Aut(G)$  act quasi-transitively on $G$.
Let $\phi\in\Phi_E(\sH)$.
There exists $\mu=\mu(G,\phi)\in(0,\oo)$, called the \emph{connective
constant}, such that
$$
\lim_{n\to\oo} w(\Si_n)^{1/n} = \mu.
$$
\end{theorem}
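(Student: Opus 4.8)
The standard route is via submultiplicativity and Fekete's lemma, so the plan is to show that $n \mapsto w(\Si_n)$ is (up to the usual care) submultiplicative, and then to bound the limit away from $0$ and $\oo$ using \eqref{eq:minmax}. First I would establish the key inequality $w(\Si_{m+n}) \le w(\Si_m)\,w(\Si_n)$. Given a SAW $\pi$ of length $m+n$ starting at $\id$, split it at its $m$th vertex $\pi_m$ into the initial segment $\pi'$ (an $m$-step SAW from $\id$) and the terminal segment $\pi''$ (an $n$-step SAW from $\pi_m$); the weight factorizes as $w(\pi) = w(\pi')\,w(\pi'')$ by \eqref{eq:wtprod}. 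Since distinct $\pi$ give distinct pairs $(\pi',\pi'')$, and since $\pi''$ lies in $\Si_n(\pi_m)$, we get $w(\Si_{m+n}) \le \sum_{v \in V} \mathbf{1}[\text{some } \pi \text{ reaches } v \text{ in } m \text{ steps}]\, w(\Si_m \text{-part ending at } v)\, w(\Si_n(v))$. Here quasi-transitivity enters: $\sH$ has finitely many orbits, $\phi$ is $\sH$-invariant, so $w(\Si_n(v)) = w(\Si_n(\a \id))$ depends only on the orbit of $v$, and one reorganizes the sum to get $w(\Si_{m+n}) \le w(\Si_m)\,\max_{v} w(\Si_n(v))$. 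The only genuine nuisance is that $w(\Si_n(v))$ is not literally equal to $w(\Si_n) = w(\Si_n(\id))$ when $v$ is not in the orbit of $\id$; this is handled exactly as in the unweighted quasi-transitive case in \cite{jmhII}, by comparing $w(\Si_n(v))$ across the finitely many orbits at bounded multiplicative cost, which does not affect the exponential rate.

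Next, granted a clean submultiplicative relation of the form $a_{m+n} \le C a_m a_n$ for $a_n := w(\Si_n)$ with a fixed constant $C \ge 1$ absorbing the finite-orbit correction, Fekete's subadditive lemma applied to $\log(C a_n)$ yields that $\lim_{n\to\oo} (C a_n)^{1/n}$ exists in $[0,\oo)$, and hence $\mu := \lim_{n\to\oo} a_n^{1/n}$ exists in $[0,\oo)$.

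It remains to show $0 < \mu < \oo$. For the lower bound, $G$ is infinite, connected and locally finite, so from $\id$ there is at least one SAW of each length $n$ (take a geodesic ray, or iterate that every vertex has a neighbour extending a given SAW infinitely often — infiniteness and connectedness suffice); each such $\pi$ has $w(\pi) \ge \phimin^n > 0$ by \eqref{eq:minmax}, so $a_n \ge \phimin^n$ and $\mu \ge \phimin > 0$. For the upper bound, let $d$ be the maximum degree over the (finitely many) orbit representatives — finite by local finiteness and quasi-transitivity. Then $|\Si_n| \le d(d-1)^{n-1} \le d^n$, and each SAW has weight at most $\phimax^n < \oo$ by \eqref{eq:minmax}, whence $a_n \le (d\,\phimax)^n$ and $\mu \le d\,\phimax < \oo$.

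The step I expect to require the most care is the bookkeeping in the submultiplicativity argument: making precise how the $\sH$-invariance of $\phi$ together with quasi-transitivity lets one replace $w(\Si_n(v))$ by $w(\Si_n)$ up to a bounded factor, uniformly in $n$. This is routine and is exactly the point at which the proof mirrors the unweighted argument of \cite{jmhII}; since the weight function acts symmetrically (as noted after \eqref{eq:wtprod}) and is bounded between $\phimin$ and $\phimax$, no new phenomena arise, which is why the authors omit the proof.
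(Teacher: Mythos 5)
Your skeleton is the one the paper has in mind — it omits the proof precisely because it is the classical submultiplicativity/Fekete argument of Hammersley, supplemented with the degree and $\phimin,\phimax$ bounds from \eqref{eq:minmax} to place $\mu$ in $(0,\oo)$. Those parts of your write-up are fine, including the observation that in the infinite, connected, locally finite graph there is a SAW of every length from $\id$, so $w(\Si_n)\ge\phimin^n$.

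The one place I would push back is the step you label a ``routine nuisance.'' What the splitting argument gives cleanly is $w(\Si_{m+n}) \le w(\Si_m)\,\max_v w(\Si_n(v))$, and Fekete applied to $\sigma_n := \max_v w(\Si_n(v))$ yields a limit $\mu$ with $\limsup_n w(\Si_n)^{1/n}\le\mu$. To obtain $\liminf_n w(\Si_n)^{1/n}\ge\mu$ you need something like $w(\Si_{n+r}(\id))\ge c\,\sigma_n$ for fixed $c,r$, and this does \emph{not} follow ``at bounded multiplicative cost'' by prepending a short path from $\id$ to a vertex in the maximizing orbit: the prepended path may intersect the SAW, and the weight of those intersecting walks is not a priori negligible relative to $\sigma_n$. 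Resolving this requires a genuine combinatorial device (a last-exit decomposition or a surgical/unfolding argument controlling multiplicities, in the spirit of what the paper later does for Theorem~\ref{thm1}). Also, \cite{jmhII} treats only $\ZZ^d$, which is vertex-transitive, so there is no ``quasi-transitive case in \cite{jmhII}'' to cite; the orbit comparison lives in later quasi-transitive extensions. So the plan is the right one, but the clause ``which does not affect the exponential rate'' conceals the only real work in the proof.
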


Let $G=(V,E)\in\sG$. It was shown in \cite{GL-loc,GL-Cayley} (see also the review \cite{GrLrev2})
how to define \lq bridge' SAWs on certain general families of graphs, 
and how to adapt the proof of Hammersley and Welsh
\cite{HW62} to show equality of the connective constant and the bridge constant. 
Key to this approach is the following notion of a \ghf. See \cite{GL-Cayley,LyP}
for accounts of unimodularity.

\begin{definition} \label{def:height}
A \emph{\ghf} on $G=(V,E)\in\sG$ is a pair $(h,\sH)$ such that:
\begin{letlist}
\item $h:V \to\ZZ$, and $h(\id)=0$, 
\item $\sH\le\Aut(G)$ acts quasi-transitively on $G$, and 
$h$ is \emph{\hdi} in that
$$
h(\a v) - h(\a u) = h(v) - h(u), \qq \a \in \sH,\ u,v \in V,
$$
\item for  $v\in V$,
there exist neighbours $u$, $w$ such that
$h(u) < h(v) < h(w)$.
\end{letlist}
A \ghf\ $(h,\sH)$  is called \emph{unimodular} if the action of
$\sH$ on $V$ is unimodular.
\end{definition}

Associated with a \ghf\ $(h,\sH)$
are two integers $d$, $r$ which we define next. Let
\begin{equation}\label{eq:defd}
d=d(h)=\max\bigl\{|h(u)-h(v)|: u,v\in V,\ u \sim v\bigr\},
\end{equation}
where $u \sim v$ means that $u$ and $v$ are neighbours.

If $\sH$ acts transitively, we set
$r=0$. Assume $\sH$ does not act transitively, and 
let $r=r(h,\sH)$ be the infimum of all $r$ such that the following holds.
Let $o_1,o_2,\dots,o_M$ be representatives of the orbits of $\sH$.
For $i\ne j$, there
exists $v_j \in \sH o_j$ 
such that $h(o_i)<h(v_j)$, and a SAW $\nu_{i,j}:=\nu(o_i,v_j)$ from $o_i$
to $v_j$, with length $r$ or less, all of whose vertices $x$, other than
its endvertices,  satisfy $h(o_i)<h(x)< h(v_j)$.
We fix such SAWs $\nu_{i,j}$, and we set $\nu_{i,i}=\{o_i\}$.
These SAWs will be used in Sections \ref{sec:pf1} and \ref{sec:pf3.6}. 
Meanwhile, set
\begin{equation}\label{eq:phimin-}
\phi_\nu =\min\bigl\{\phi(e): e \in \nu_{i,j} \text{ for some } i,j\bigr\},
\end{equation}
and
\begin{equation}\label{eq:phim}
\th_{i,j} = w(\nu_{i,j}), \qq \thmin=\min_{i,j} \th_{i,j},\qq
\thmax=\max_{i,j} \th_{i,j},
\end{equation}
where $\th_{i,i}:=1$.
If $\sH$ acts transitively, we set $\phi_\nu = \thmin=\thmax=1$.
Some properties of $r$ and $d$ have been established in  \cite[Prop.\ 3.2]{GL-loc}.

We turn to so-called half-space walks and bridges.
Let $G=(V,E)\in \sG$ have a \ghf\ $(h,\sH)$, and let $\phi\in\Phi_E(\sH)$.
Let $v \in V$ and $\pi=(\pi_0,\pi_1,\dots,\pi_n)\in\Si_n(v)$.
We call $\pi$ a \emph{half-space SAW} if 
$$
h(\pi_0)<h(\pi_i), \qq 1\leq i\leq n,
$$
and we write $H_n(v)$ for the set of half-space walks $\pi$
with initial vertex $v$.
We call $\pi$ a \emph{bridge} if
\begin{equation}\label{eq:bridge}
h(\pi_0)<h(\pi_i) \leq h(\pi_n), \qq 1\leq i\leq n,
\end{equation}
and a \emph{reversed bridge} if \eqref{eq:bridge} is replaced by
$$
h(\pi_n)\le h(\pi_i) < h(\pi_0), \qq 1\leq i\leq n.
$$
 
The \emph{span} of a SAW $\pi$ is defined as 
$$
\spz(\pi) = \max_{0\leq i\leq n}h(\pi_i)-\min_{0\leq i\leq n}h(\pi_i).
$$
Let $\be_n(v)$ be the set of $n$-step bridges
$\pi$ from $v$, and let
\begin{equation}\label{eq:defbinf}
w(\be_n):= \min\{w(\be_n(v)): v \in V\}.
\end{equation}
It is easily seen (as in \cite{HW62}) that
\begin{equation}\label{eq:b-subadditive}
w(\be_{m+n}) \ge w(\be_m)w( \be_n),
\end{equation}
from which we deduce the existence of the \emph{bridge constant}
\begin{equation}\label{eq:bexists}
\be  = \be(G,h,\sH) = \lim_{n\to\oo} w(\be_n)^{1/n}
\end{equation}
satisfying
\begin{equation}\label{eq:bless}
w(\be_n) \le \be^n, \qq n \ge 0.
\end{equation}

\begin{remark}\label{rem:hdep}
The bridge constant $\be$ depends on the choice
of \ghf\ $(h,\sH)$ and weight function $\phi\in\Phi_E(\sH)$. 
We shall see in Theorem \ref{thm1}
that its value is constant (and equal to $\mu$)
across the set of \emph{unimodular} \ghf s $(h,\sH)$
for which $\phi$ is $\sH$-invariant. 
\end{remark}

\begin{proposition}\label{prop:mequalsb}
Let  $G=(V,E)\in\sG$ have a \ghf\ $(h,\sH)$, and suppose 
$\phi\in\Phi_E(\sH)$. Then
\begin{equation}\label{eq:bvconv}
w(\be_n(v))^{1/n} \to \be, \qq v \in V,
\end{equation}
and furthermore 
\begin{equation}\label{eq:bnabove}
w(\be_n(v)) \le \be^n(\be/\phi_\nu)^r, \qq n\ge 1,\ v \in V,
\end{equation}
where $r=r(h,\sH)$ and $\phi_\nu$ are given after \eqref{eq:defd}.
\end{proposition}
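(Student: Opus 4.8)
The plan is to bracket $w(\be_n(v))$ between $w(\be_n)$ from below and a quantity of the form $\be^n\cdot(\text{constant})$ from above, the latter obtained by \emph{prepending} a short connecting path to a bridge so that the completed walk starts at a vertex realising the minimum in \eqref{eq:defbinf}. The lower estimate is immediate: since $w(\be_n)=\min_{u\in V}w(\be_n(u))\le w(\be_n(v))$ and $w(\be_n)^{1/n}\to\be$ by \eqref{eq:bexists}, one gets $\liminf_{n\to\oo}w(\be_n(v))^{1/n}\ge\be$. Granted \eqref{eq:bnabove}, the matching $\limsup$ bound follows because $(\be/\phi_\nu)^{r/n}\to1$, so \eqref{eq:bvconv} reduces to \eqref{eq:bnabove}.

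The core is a weight-controlled injection of $\be_n(v)$ into the bridges from a suitable vertex. First, since $\phi$ is $\sH$-invariant and $h$ is $\sH$-difference-invariant, translating by $\a\in\sH$ maps $\be_n(v)$ bijectively and weight-preservingly onto $\be_n(\a v)$; hence $w(\be_n(v))$ depends only on the orbit $\sH v$, say $v\in\sH o_i$. Fix an orbit index $k$ and recall from the discussion after \eqref{eq:defd} the SAW $\nu_{k,i}$, of length $r_{k,i}:=|\nu_{k,i}|\le r$, from $o_k$ to some $v_i\in\sH o_i$, with all interior vertices $x$ satisfying $h(o_k)<h(x)<h(v_i)$ and with weight $\th_{k,i}=w(\nu_{k,i})$ as in \eqref{eq:phim}. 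Given $u\in\sH o_k$, choose $\g\in\sH$ with $\g o_k=u$ and $\de\in\sH$ with $\de o_i=\g v_i$, and for $\pi\in\be_n(o_i)$ form the concatenation $\Psi(\pi)=\g\nu_{k,i}\cdot\de\pi$: follow $\g\nu_{k,i}$ from $u$ to $\g v_i$, then the translated bridge $\de\pi$ from $\g v_i$. By $\sH$-difference-invariance, $\de\pi$ is a bridge from $\g v_i$, so all its non-initial vertices have height $>h(\g v_i)$, while all interior vertices of $\g\nu_{k,i}$ have height $<h(\g v_i)$ and $u$ lies strictly below everything else; by Definition \ref{def:height} and \eqref{eq:bridge} this makes $\Psi(\pi)$ a self-avoiding bridge from $u$ with $n+r_{k,i}$ steps. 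The map $\Psi$ is injective (delete the first $r_{k,i}$ steps, apply $\de^{-1}$) and $w(\Psi(\pi))=\th_{k,i}\,w(\pi)$ by $\sH$-invariance of $\phi$, so summing over $\pi\in\be_n(o_i)$ yields
\[
\th_{k,i}\,w(\be_n(v))\ \le\ w\bigl(\be_{n+r_{k,i}}(u)\bigr),\qquad u\in\sH o_k .
\]

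To extract \eqref{eq:bnabove}, choose the orbit index $k$ and representative $u\in\sH o_k$ so that $u$ realises the minimum in \eqref{eq:defbinf} at step count $n+r_{k,i}$; since there are only finitely many orbits and the relevant step counts all lie in $\{n,\dots,n+r\}$, this choice can be pinned down by a short argument tracking the minimising orbit. Then $w(\be_{n+r_{k,i}}(u))\le\be^{n+r_{k,i}}$ by \eqref{eq:bless}, and $\th_{k,i}\ge\phi_\nu^{r_{k,i}}$ because every edge of $\nu_{k,i}$ has weight at least $\phi_\nu$ (see \eqref{eq:phimin-}); hence
\[
w(\be_n(v))\ \le\ \frac{\be^{n+r_{k,i}}}{\phi_\nu^{r_{k,i}}}
=\be^{n}\Bigl(\frac{\be}{\phi_\nu}\Bigr)^{r_{k,i}}
\ \le\ \be^{n}\Bigl(\frac{\be}{\phi_\nu}\Bigr)^{r},
\]
the last step using $r_{k,i}\le r$ together with $\be\ge\phi_\nu$. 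The inequality $\be\ge\phi_\nu$ is itself proved by the same kind of concatenation: from an arbitrary vertex take one upward step (weight $\ge\phimin$, by Definition \ref{def:height}(c)) and then repeatedly append translated connector SAWs $\nu_{l,l'}$ with $l'\ne l$, each of which keeps the walk a bridge and contributes per-step weight at least $\phi_\nu$; this forces $w(\be_N)^{1/N}\ge\phi_\nu$ along a cofinal set of $N$, hence $\be\ge\phi_\nu$. With the easy lower bound, \eqref{eq:bnabove} gives \eqref{eq:bvconv}.

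The main obstacle is the verification, in the construction of $\Psi$, that the concatenation is genuinely self-avoiding and genuinely a bridge: this is precisely where all three clauses of Definition \ref{def:height} and the definition of $r$ are used, and it is the only place where anything delicate happens. The accompanying point needing care is the last step — one must ensure the image vertex $u$ can be taken to realise $w(\be_{\cdot})$ at the matching step count $n+r_{k,i}$, so that \eqref{eq:bless} applies and the constant comes out as $(\be/\phi_\nu)^r$ rather than the coarser $(\be/\phimin)^r$ that padding with single upward steps would produce; this is exactly the role of $\be\ge\phi_\nu$.
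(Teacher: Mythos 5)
Your overall strategy — bound $w(\be_n(v))$ below by $w(\be_n)$ and above by prepending a connector SAW $\nu_{k,i}$ to get into the set of bridges from a minimising vertex — is the same as the paper's, and your verification that the concatenation is a self-avoiding bridge is fine. But there is a genuine gap in the step where you ``choose the orbit index $k$ and representative $u\in\sH o_k$ so that $u$ realises the minimum in \eqref{eq:defbinf} at step count $n+r_{k,i}$.'' This is circular: the step count $n+r_{k,i}$ at which you want $u$ to be minimising is itself determined by the orbit type $k$ of $u$. The map ``$k\mapsto$ type of the minimiser at step count $n+r_{k,i}$'' is an arbitrary self-map of the finite set of orbit indices, and need have no fixed point; the ``short argument tracking the minimising orbit'' you invoke does not exist in the form you need. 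Because of this, \eqref{eq:bless} cannot be applied as you intend, and the upper bound is not established.

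The paper resolves exactly this difficulty by working the other way around: it first fixes the uniform step count $n+r$, chooses $x$ realising the minimum $w(\be_{n+r}(x))=w(\be_{n+r})$ (so that \eqref{eq:bless} applies with no circularity), lets $o_i$ be the orbit type forced by $x$, uses the connector $\nu(o_i,v_j)$ of length $l\le r$ to prepend, and then fills in the remaining $r-l$ steps with the padding inequality $\phi_\nu^{\,r-l}\,w(\be_{n+l}(x))\le w(\be_{n+r}(x))$. With this ordering one gets directly $\phi_\nu^{\,r}\,w(\be_n(v))\le w(\be_{n+r})\le\be^{n+r}$, i.e.\ \eqref{eq:bnabove}, and there is no need for the auxiliary inequality $\be\ge\phi_\nu$ that your version requires (and whose proof you only sketch). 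If you fix the step count at $n+r$ first as the paper does, your construction of $\Psi$ works unchanged, the circularity disappears, the $\be\ge\phi_\nu$ lemma becomes unnecessary, and the bound comes out as stated.
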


\begin{theorem}[Weighted bridge theorem]\label{thm1}
Let $G=(V,E)\in\sG$ possess a \ughf\ $(h,\sH)$,
and let $\phi\in\Phi_E(\sH)$. Then $\be = \mu$.
\end{theorem}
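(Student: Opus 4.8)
The plan is to run the weighted analogue of the Hammersley--Welsh argument \cite{HW62}, exactly as in the unweighted case \cite[Thm~4.3]{GL-loc}; the only novelty is to carry the weight function $\phi$ through the combinatorial surgeries, and since in the present setting $\phi$ is automatically bounded away from $0$ and $\oo$ by \eqref{eq:minmax}, this costs little. One inequality is immediate: every $n$-step bridge from $\id$ is an $n$-step SAW from $\id$, so $w(\be_n)=\min_v w(\be_n(v))\le w(\be_n(\id))\le w(\Si_n)$, and letting $n\to\oo$ gives $\be\le\mu$ by \eqref{eq:bexists} and Theorem \ref{thm:concon}. The substance lies in the reverse bound $\mu\le\be$.

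First I would bound half-space walks. Any $\pi\in H_n(v)$ admits the standard \emph{irreducible} decomposition as a concatenation of alternating bridges and reversed bridges $\pi^{(1)},\dots,\pi^{(k)}$, say with $n_j$ steps and span $s_j$, where $s_1>s_2>\cdots>s_k\ge 1$. Since the edge-weights multiply, $w(\pi)=\prod_j w(\pi^{(j)})$, and since $\phi$ is direction-symmetric a reversed bridge has the same weight as the bridge obtained from it by reversal; hence by \eqref{eq:bnabove} of Proposition \ref{prop:mequalsb} each factor is at most $\be^{\,n_j}(\be/\phi_\nu)^{r}$. Because $G$ is locally finite and $\sH$-quasi-transitive, $d=d(h)$ of \eqref{eq:defd} is finite, so $s_j\le d\,n_j$ and thus $\sum_j s_j\le d\,n$; as the $s_j$ are distinct positive integers this forces $k=O(\sqrt n)$, and the number of admissible span-sequences is at most the number of partitions into distinct parts of integers not exceeding $d\,n$, which is $e^{o(n)}$ by the Hardy--Ramanujan estimate (Remark \ref{rem3}). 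Summing over decompositions, and noting that the number of compositions $n_1+\cdots+n_k=n$ is likewise $e^{o(n)}$ for $k=O(\sqrt n)$, gives $w(H_n(v))\le e^{o(n)}\be^{\,n}$ uniformly in $v$; in particular $\limsup_n w(H_n(\id))^{1/n}\le\be$.

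Next I would pass to general SAWs by splitting at a lowest vertex. Given $\pi=(\pi_0,\dots,\pi_n)\in\Si_n(\id)$, let $\pi_m$ be the first vertex of minimum height; then $\pi$ is the concatenation of two half-space walks (one traversed in reverse) meeting at $\pi_m$, with $m$ and $n-m$ steps, up to the usual minor care concerning other vertices of minimal height, as in \cite{GL-loc}. If $\sH$ acts transitively this yields immediately $w(\Si_n)\le e^{o(n)}\sum_{m=0}^{n}w(H_m)\,w(H_{n-m})\le e^{o(n)}\be^{\,n}$, so $\mu=\lim_n w(\Si_n)^{1/n}\le\be$. If $\sH$ is not transitive the two half-space pieces may start in different $\sH$-orbits, and one first relocates an endpoint to a common orbit along an auxiliary SAW $\nu_{i,j}$ (of length $\le r$) as introduced after \eqref{eq:defd}: this alters the step-count by a bounded amount and the weight by an $n$-independent factor controlled by $\phi_\nu,\thmin,\thmax,\phimin,\phimax$, and hence disappears on taking $n$th roots. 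Combined with $\be\le\mu$, this gives $\be=\mu$.

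Unimodularity enters, as in \cite{GL-loc} and already in Proposition \ref{prop:mequalsb}, to make the estimate \eqref{eq:bnabove} --- and therefore the whole argument --- uniform in the base vertex in the non-transitive case: a mass-transport argument is what upgrades the subadditivity of $n\mapsto w(\be_n)$ to a two-sided comparison of $w(\be_n(v))$ with $\be^{\,n}$. The main obstacle I anticipate is bookkeeping rather than conceptual: one must verify that the subexponential Hardy--Ramanujan correction really does survive the replacement of cardinalities by aggregate weights --- which it does precisely because $d(h)<\oo$ keeps spans linear in the number of steps while $\phi$ stays bounded by \eqref{eq:minmax} --- and that the orbit-matching surgery in the non-transitive case preserves self-avoidance and the strict monotonicity of spans used in the decomposition. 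As these points were settled for the unweighted model in \cite{GL-loc}, only a careful accounting of the weight factors is required here.
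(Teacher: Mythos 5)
Your overall plan (Hammersley--Welsh unfolding, bound half-space walks via the strictly-decreasing-span decomposition, split a general SAW at a lowest vertex, patch orbits with the connectors $\nu_{i,j}$) is in the right spirit and matches the route the paper defers to via \cite[Thm 4.3]{GL-loc} and the detailed argument for Theorem \ref{thm:group}. The one inequality $\be\le\mu$ and the final splitting step are fine. But there is a genuine gap in the half-space bound, and it sits exactly where unimodularity must do its work. You write that, since $\phi$ is symmetric, a reversed bridge has the same weight as the bridge obtained by reversal, and that ``hence by \eqref{eq:bnabove} each factor is at most $\be^{n_j}(\be/\phi_\nu)^r$.'' That inference is false. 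Reversal is a weight-preserving bijection from reversed bridges \emph{starting} at $v$ to bridges \emph{ending} at $v$; Proposition \ref{prop:mequalsb} controls bridges \emph{starting} at a given vertex. In a non-unimodular setting the two aggregate weights are genuinely different: the natural height function on Trofimov's grandparent graph has $\be^+\ne\be^-$ and $\mu=\max\{\be^+,\be^-\}>\be^+$ (Remark \ref{lind1}), so the conclusion $\mu=\be$ really does fail. Your argument as written never invokes unimodularity in a substantive way, yet proves a statement that is false without it --- that is the tell-tale sign of the gap.

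Relatedly, you have misattributed where unimodularity enters. Proposition \ref{prop:mequalsb} and \eqref{eq:bnabove} are proved in the paper for an arbitrary \ghf, using only quasi-transitivity and the connectors $\nu_{i,j}$; no mass-transport is used there, and they are \emph{not} restricted to the unimodular case (see also Remark \ref{rem:hdep}). Unimodularity is needed, rather, to compare bridges ending at a vertex with bridges starting at a vertex --- equivalently, to show $\be^-=\be^+$, or (in the form actually used in \cite{GL-loc} and in the proof of Theorem \ref{thm:group}) to control the multiplicity of the surgery map $\pi\mapsto\pi'$ that reflects and translates the alternating pieces so that the whole half-space walk becomes a single bridge. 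Your ``decompose and multiply each factor'' route bypasses the surgery, but it therefore requires an explicit mass-transport lemma bounding $w(\be_n^-(v))$ by $e^{\o(n)}\be^n$ uniformly in $v$; supplying that lemma (or switching to the reflection surgery of Proposition \ref{hsb3}, which converts reversed bridges into bridges before any estimate is applied) is the missing step. The weight-accounting you describe --- $\phi$ bounded above and below by \eqref{eq:minmax}, span linear in steps via $d(h)<\oo$, Hardy--Ramanujan for the span sequences, $e^{\o(n)}$ many compositions when $k=O(\sqrt n)$ --- is all correct and is indeed the only new bookkeeping beyond the unweighted case.
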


The proof of Theorem \ref{thm1}  is summarized in Section \ref{sec:pf2}. 
It is essentially that of \cite[Thm 1]{GL-loc}. The supremum metric of the next theorem 
will appear in a more general form in \eqref{eq:dist3}.

\begin{remark}\label{lind1}
{\rm(Added on final revision.)} Let $G \in \sG$ have \ghf\ $(h,\sH)$,
and consider unweighted SAWs on $G$.
Since the current paper was written, Lindorfer \cite{Lind19} has
found an expression for the connective constant $\mu=\mu(G)$ 
without assuming unimodularity, namely
$\mu=\max\{\be^+,\be^-\}$
where $\be^\pm$ is the bridge constant associated with the height function 
$(\pm h,\sH)$. We have by \cite{GL-loc} that $\be^+=\be^-$
if $(h,\sH)$ is unimodular. Furthermore, unimodularity is necessary for $\be^+=\be^-$,
as exemplified by the natural \ghf\ on Trofimov's
grandparent graph \cite{trof85}.
Corresponding statements are valid in the weighted case also.
\end{remark}

\begin{theorem}\label{thm3}
Let $G=(V,E)\in\sG$ have a \ghf\ $(h,\sH)$.
We endow $\Phi_E(\sH)$ with the supremum metric 
\begin{equation}\label{eq:new902}
\dsup(\phi,\psi) =\sup\bigl\{|\phi(e)-\psi(e)|: e\in E\bigr\}.
\end{equation}
The constants $\mu$ and $\be$ are continuous functions on $\Phi_E(\sH)$.
More precisely,
\begin{equation}\label{eq:prec}
\left(1-\frac{\dsup(\psi,\phi)}\phimin\right)\le
\frac{\mu(G,\psi)}{\mu(G,\phi)},\frac{\be(G,\psi)}{\be(G,\phi)} \le 
 \left(1+\frac{\dsup(\psi,\phi)}\phimin\right), \q \phi,\psi \in \Phi_E(\sH).
\end{equation}
\end{theorem}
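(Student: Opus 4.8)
The plan is to derive \eqref{eq:prec} by a direct comparison of the aggregate weights $w(\Si_n)$ (resp.\ $w(\be_n)$) computed with $\psi$ against those computed with $\phi$, and then to pass to $n$th roots and let $n\to\oo$. Write $c=\dsup(\psi,\phi)$, so that for every edge $e\in E$ we have $|\psi(e)-\phi(e)|\le c$, and hence, using $\phi(e)\ge\phimin>0$ from \eqref{eq:minmax} (valid since $\sH$ acts quasi-transitively and $\phi$ is $\sH$-invariant),
\begin{equation*}
\left(1-\frac{c}{\phimin}\right)\phi(e) \le \phi(e)-c \le \psi(e) \le \phi(e)+c \le \left(1+\frac{c}{\phimin}\right)\phi(e).
\end{equation*}
We may assume $c<\phimin$, as otherwise the lower bound in \eqref{eq:prec} is trivial (and if $c\ge\phimin$ the whole statement still follows from the upper bounds together with the symmetric roles of $\phi,\psi$, or one simply notes there is nothing to prove on the left).

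Next I would multiply these edgewise inequalities along a SAW. For an $n$-step SAW $\pi$, since $w_\psi(\pi)=\prod_{i=1}^n\psi(\langle\pi_{i-1},\pi_i\rangle)$ and similarly for $\phi$, the displayed bounds give, for every $\pi\in\Si_n$,
\begin{equation*}
\left(1-\frac{c}{\phimin}\right)^n w_\phi(\pi) \le w_\psi(\pi) \le \left(1+\frac{c}{\phimin}\right)^n w_\phi(\pi).
\end{equation*}
Summing over $\pi\in\Si_n$ yields the same two-sided bound relating $w_\psi(\Si_n)$ and $w_\phi(\Si_n)$; taking $n$th roots and letting $n\to\oo$, using Theorem \ref{thm:concon} to identify both limits as the respective connective constants, gives the bound for $\mu(G,\psi)/\mu(G,\phi)$ in \eqref{eq:prec}. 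For the bridge constant, the same per-SAW inequality holds for every bridge $\pi\in\be_n(v)$, so it holds for $w_\psi(\be_n(v))$ versus $w_\phi(\be_n(v))$ for each fixed $v$; taking the minimum over $v$ preserves the inequalities (each side is monotone in the individual weights), so we get the analogous bound for $w_\psi(\be_n)$ versus $w_\phi(\be_n)$, and then \eqref{eq:bexists} gives the $\be$ statement. Continuity of $\mu$ and $\be$ on $\bigl(\Phi_E(\sH),\dsup\bigr)$ is an immediate consequence: as $\psi\to\phi$ we have $c\to 0$, so both ratios tend to $1$.

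Honestly, there is no serious obstacle here; the only point demanding a little care is the reduction to $c<\phimin$ and the observation that $\phimin$ is strictly positive and finite, which is exactly what \eqref{eq:minmax} supplies under the quasi-transitivity hypothesis — without it the constants $\phimin,\phimax$ could degenerate and the bound would be vacuous. A second minor point is that the bound is stated symmetrically in $\phi$ and $\psi$ only through the normalization by $\phimin=\phimin(\phi)$; one should simply note that swapping the roles and normalizing by $\psimin$ gives the companion inequality, but \eqref{eq:prec} as written follows directly from the chain above. Everything else is the routine "multiply the edge inequalities along the walk, sum, take roots" argument, and it applies verbatim to $\Si_n$, to $\be_n(v)$ for each $v$, and hence after taking minima to $\be_n$.
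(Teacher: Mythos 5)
Your proof is correct and takes essentially the same route as the paper: bound the ratio $w_\psi(\pi)/w_\phi(\pi)$ edge by edge using $\phimin>0$ from \eqref{eq:minmax}, then sum, take $n$th roots, and invoke Theorem \ref{thm:concon} and \eqref{eq:bexists}. The paper's proof is just a terser version of the same computation.
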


\begin{proof}[Proof of Proposition \ref{prop:mequalsb}]
Assume $G$ has \ghf\  $(h, \sH)$. If $G$ is transitive, the claim is trivial
by \eqref{eq:bexists} and \eqref{eq:bless}, 
so we assume $G$ is quasi-transitive but not transitive.

Choose $x\in V$ such that $w(\be_{n+r}(x))=w(\be_{n+r})$. 
Let $v \in V$, and let $v$ have type $o_j$ and $x$ type $o_i$.
Let $\nu(o_i,v_j)$ be given as above \eqref{eq:phimin-}. 
The length $l(o_i,v_j)$ of $\nu(o_i,v_j)$
satisfies $l(o_i,v_j)\le r$. Find $\eta\in\sH$ such that $\eta(v_j)=v$, and let $\nu(x,v)=\eta(\nu(o_i,v_j))$.
Let $l=l(o_i,v_j)$ if $i\ne j$,
and $l=0$ otherwise. Then,
$$
\phi_\nu^{r}  w(\be_n(v)) \le \phi_\nu^{r-l} w(\be_{n+l}(x)) \le w(\be_{n+r}(x)) =w(\be_{n+r}),
$$
and \eqref{eq:bnabove} follows by \eqref{eq:bless}.
The limit \eqref{eq:bvconv} follows by 
\eqref{eq:defbinf} and \eqref{eq:bexists}.
\end{proof}

\begin{proof}[Proof of Theorem \ref{thm3}]
We shall show continuity at the point $\phi\in\Phi_E(\sH)$.
For $\pi\in\Si_n$,
\begin{align*}
\left(1-\frac{\dsup(\psi,\phi)}\phimin\right)^n\le
\frac{w_\psi(\pi)}{w_\phi(\pi)} = \prod_{e\in \pi} \frac{\psi(e)}{\phi(e)}
\le \left(1+\frac{\dsup(\psi,\phi)}\phimin\right)^n,
\end{align*}
from which \eqref{eq:prec} follows.
\end{proof}

\section{Weight functions with unbounded support}\label{sec:gps}

We consider weighted SAWs on groups in this section. If the weight function has bounded support, 
the relevant graph is locally finite, and the methods and conclusions of the last section apply.
New methods are needed if the support is unbounded, and it turns out to be useful to consider 
a different measure of the length of a walk. We shall consider groups that support \ghf s.

A group $\Ga$ is
called \emph{indicable} if there exists a surjective homomorphism $F: \Ga\ \to\ZZ$.
It is called \emph{virtually indicable}
if there exists $\sH \normal \Ga$ with $[\Ga:\sH]<\oo$
such that $\sH$ is indicable. If this holds, we call $\Ga$ virtually $\sH$-indicable (or,
sometimes, virtually $(\sH,F)$-indicable). (See, for example, \cite{Hill94,Hill09}
for information on virtual indicability.)

We remind the reader of the definition of a \emph{strong} \ghf, as derived from \cite[Defn 3.4]{GL-amen}.

\begin{definition}\label{def:sghf}
A (not necessarily locally finite) Cayley graph $G=(\Ga,F)$ of a finitely generated group $\Ga$ is said to have a
\emph{\sghf} $(h,\sH)$ if the following two conditions hold.
\begin{letlist}
\item $\sH\normal \Ga$ acts on $\Ga$ by left-multiplication, and $[\Ga:\sH]<\oo$.
\item $(h,\sH)$ is a \ghf\ on $G$.
\end{letlist}
\end{definition}

The group properties of: (i) virtual indicability, and (ii) the possession of a \sghf, are
equivalent in the sense of the next theorem.

\begin{theorem}
\label{thm:indic}
Let $\Ga$ be a finitely generated group.
\begin{letlist}
\item If $\Ga$ is virtually $(\sH,F)$-indicable,
any (not necessarily locally finite) Cayley graph $G=(\Ga,F)$ of 
$\Ga$ possesses a \sghf\ of the form $(h,\sH)$.
\item If some Cayley graph $G$ of $\Ga$ possesses a \sghf, denoted $(h,\sH)$,
then $\Ga$ is virtually $\sH$-indicable.
\end{letlist}
\end{theorem}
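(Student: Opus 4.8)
The plan is to prove the two implications separately, both being essentially translations between the algebraic notion of (virtual) indicability and the combinatorial/geometric notion of a strong graph height function.

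For part (a), suppose $\Ga$ is virtually $(\sH,F)$-indicable, so $\sH\normal\Ga$ has finite index and $F:\sH\to\ZZ$ is a surjective homomorphism. Fix a Cayley graph $G=(\Ga,F_0)$ (here $F_0$ denotes the generating set — I will rename to avoid clashing with the homomorphism $F$). The natural candidate for the height function is obtained by extending $F$ from $\sH$ to $\Ga$. First I would define $h$ on $\Ga$ by choosing coset representatives $\ga_1,\dots,\ga_M$ for $\sH\backslash\Ga$ (with $\ga_1=\id$), writing each $x\in\Ga$ uniquely as $x=s\ga_i$ with $s\in\sH$, and setting $h(x):=F(s)$; then $h(\id)=0$. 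One checks $h$ is $\sH$-difference-invariant: for $\a\in\sH$ and $x=s\ga_i$ we have $\a x=(\a s)\ga_i$, so $h(\a x)-h(\a y)=F(\a s)-F(\a t)=F(s)-F(t)=h(x)-h(y)$ whenever $x,y$ lie in the same coset, and a short argument handles the general case by comparing through coset representatives — this is the one routine computation to run carefully. Since $[\Ga:\sH]<\oo$, $\sH$ acts quasi-transitively on $\Ga$ (finitely many orbits), giving (b) of Definition \ref{def:height}. For condition (c) — the existence of neighbours $u,w$ of any $v$ with $h(u)<h(v)<h(w)$ — I would use surjectivity of $F$: pick $g\in\sH$ with $F(g)>0$; then $g$ is a product of generators from $F_0$, and along that word the $h$-values change by bounded steps from $h(v)$ to $h(v)+F(g)$, so some generator step goes strictly up and (using $g^{-1}$) some goes strictly down. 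A small subtlety: one must ensure the generating set $F_0$ is such that these intermediate vertices are genuine neighbours; since the generating set of a Cayley graph is closed under inverses and we may pass to the word in generators, this is fine. Finally, $\sH\normal\Ga$ acting by left multiplication with finite index is exactly Definition \ref{def:sghf}(a), so $(h,\sH)$ is a strong graph height function.

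For part (b), suppose a Cayley graph $G=(\Ga,F_0)$ possesses a strong graph height function $(h,\sH)$, so $\sH\normal\Ga$, $[\Ga:\sH]<\oo$, and $h$ is $\sH$-difference-invariant with the non-degeneracy property (c). I want a surjective homomorphism $\sH\to\ZZ$. The candidate is $\chi(\a):=h(\a)-h(\id)=h(\a)$ restricted to $\sH$ (recall $h(\id)=0$). That $\chi$ is a homomorphism on $\sH$ follows from difference-invariance: $\chi(\a\be)=h(\a\be)=h(\a\be)-h(\id)$, and applying the invariance $h(\a v)-h(\a u)=h(v)-h(u)$ with the automorphism $\a$ (left multiplication by $\a$), $u=\id$, $v=\be$, gives $h(\a\be)-h(\a)=h(\be)-h(\id)=\chi(\be)$, hence $\chi(\a\be)=\chi(\a)+\chi(\be)$. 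The image $\chi(\sH)$ is a subgroup of $\ZZ$. It remains to show it is nontrivial (then it is $a\ZZ$ for some $a\ge 1$, and $\tfrac1a\chi:\sH\to\ZZ$ is the desired surjection, so $\sH$ is indicable). Nontriviality comes from Definition \ref{def:height}(c): there is a vertex $v$ with a neighbour $w$, $h(w)>h(v)$; writing $w=v g_0$ for a generator $g_0\in F_0$, difference-invariance under left multiplication by $v^{-1}$... but wait — $v^{-1}$ need not lie in $\sH$, so I must be slightly more careful. Instead, note $h(w)-h(v)=h(vg_0)-h(v)$, and I would like to relate this to $\chi$ of something in $\sH$; since $[\Ga:\sH]<\oo$, some power $g_0^k\in\sH$, and telescoping $h(vg_0^k)-h(v)=\sum_{j=0}^{k-1}\bigl(h(vg_0^{j+1})-h(vg_0^j)\bigr)$, where by left-difference-invariance under the $\sH$-elements $v g_0^{jk'}\cdots$ — the cleanest route is: the increments $h(xg_0)-h(x)$ take only finitely many values as $x$ ranges over $\Ga$ (boundedly many, by $d(h)<\oo$), and they are constant on left $\sH$-cosets when the shift is by an $\sH$-element; combining with (c) forces $\chi$ to be nonzero on $\sH$. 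I expect this last nontriviality argument — getting from a single strictly-increasing edge to a nonzero value of $\chi$ on the finite-index subgroup $\sH$, handling the coset bookkeeping — to be the main obstacle, and the place where the finite-index hypothesis and the precise form of $\sH$-difference-invariance must be used together.

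Throughout, I would be careful about one notational hazard: the letter $F$ is used both for the homomorphism in "virtually $(\sH,F)$-indicable" and informally for generating sets of Cayley graphs; in the write-up I would keep the generating set as $S$ (or reuse $F$ only for the homomorphism) to avoid confusion. Modulo that, both directions are short once the candidate maps ($h$ extending $F$ in one direction, $\chi=h|_\sH$ in the other) are written down, and the content is entirely in verifying the axioms of Definition \ref{def:height} and the surjectivity, using $[\Ga:\sH]<\oo$ and property (c) at the crucial points.
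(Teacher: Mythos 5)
Your route for part (a) is genuinely different from the paper's, and it has a real gap. The paper passes to a locally finite Cayley subgraph $G'$ on the same vertex set (Lemma~\ref{l41}), invokes \cite[Thm 3.4]{GL-Cayley} to obtain a \sghf\ $(h,\sH)$ on $G'$, and transfers it to $G$; the transfer is immediate because a neighbour in $G'$ is a neighbour in $G$, so Definition~\ref{def:height}(c) only gets easier on the larger graph. You instead define $h$ directly by $h(s\g_i):=F(s)$ via coset representatives $\g_1,\dots,\g_M$. This does make $h$ $\sH$-difference-invariant, and quasi-transitivity is clear, but verifying condition (c) is where the idea breaks. First, for $v=s\g_i$ and $g\in\sH$ one has $vg=s(\g_i g\g_i^{-1})\g_i$, so $h(vg)-h(v)=F(\g_i g\g_i^{-1})$, not $F(g)$ as your sketch asserts. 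Second, and more seriously, even with a positive net change the up-step along the word for $g$ occurs at some intermediate vertex $u$ that need not lie in the same $\sH$-coset as $v$, so $\sH$-difference-invariance cannot transport that up-step back to $v$. The construction does in fact fail: take $\Ga=\ZZ$, $\sH=2\ZZ$, $F(2n)=n$, generating set $\{\pm1\}$, coset representatives $\g_1=0$, $\g_2=1$. Then $h(v)=\lfloor v/2\rfloor$, and $v=0$ has neighbours $\pm1$ with $h(1)=0$ and $h(-1)=-1$, so there is no strictly higher neighbour of $0$ and condition (c) fails. To rescue the direct approach one needs a smarter extension of $F$ --- the cited result constructs a \emph{harmonic} extension of $F$, which is exactly why --- or one reduces to the locally finite case as the paper does.

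For part (b), your proof that $\chi:=h|_\sH$ is a homomorphism is correct and is essentially what the paper does. The nontriviality you rightly flag can be closed in a line: by Definition~\ref{def:height}(c) one can follow strictly increasing edges from $\id$ indefinitely, so $h$ is unbounded above; $\sH$-difference-invariance with $h(\id)=0$ gives $h(s\g_i)=h(s)+h(\g_i)$ for $s\in\sH$, so if $h|_\sH\equiv 0$ then $h$ would take only the finitely many values $h(\g_1),\dots,h(\g_M)$, a contradiction. Hence $\chi(\sH)=\lam\ZZ$ for some $\lam\ge1$ and $\chi/\lam$ is the required surjection. This is precisely the paper's proof, where it sets $F:=h/\lam$ with $\lam=\gcd\{|h(u)|:u\in\sH,\,h(u)\ne0\}$; the paper leaves the nonemptiness of that set tacit, so your instinct to question it was sound.
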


In the context of this theorem, $\sH$ acts freely on the Cayley graph $G$,
and is therefore unimodular. Hence,
the functions $(h,\sH)$ are \ughf s on $G$. 
The proofs of this and other results in this section may be found in Section \ref{sec:pf1}.

Here is some notation.
Let $\Ga$ be finitely generated and virtually $(\sH,F)$-indicable.
It is shown in \cite[Sect.\ 7]{GL-Cayley} that, for a given locally finite Cayley graph $G$,  
there exists a unique harmonic extension $\psi$ of $F$, 
and that $\psi$ takes rational values. It is then shown how to construct a harmonic, \sghf\ 
for $G$ of the form
$(h,\sH)$. The latter construction is not unique. For given $(\Ga, \sH, F, G)$, we write $h=h_F$ 
for a given such function.

By Theorems \ref{thm1} and \ref{thm:indic}, the weighted bridge constant equals the 
weighted connective
constant for any \emph{locally finite}, weighted Cayley graph of a 
finitely generated, virtually indicable group. Here is an example of a class of such groups.

\begin{example}[Elementary amenability]
Let $\EFG$ denote the class of infinite, finitely generated, elementary amenable groups.
It is standard that any  $\Ga\in\EFG$ is virtually indicable.
See, for example, \cite[Proof of Thm 4.1]{GL-Cayley}.
\end{example}

We turn to weight functions and SAWs.
Let $\Ga$ be virtually $\sH$-indicable, and let $\phi:\Ga\to [0,\oo)$ satisfy
$\phi(\id)=0$ and
be \emph{symmetric} in that
$$
\phi(\g^{-1})=\phi(\g), \qq \g\in \Ga.
$$
The \emph{support} of $\phi$ is the set $\supp(\phi):=\{\g: \phi(\g)>0\}$.
We call $\phi$ \emph{summable} if
\begin{equation}\label{eq:205}
w(\Ga):=\sum_{\g\in\Ga} \phi(\g)
\end{equation}
satisfies $w(\Ga)<\oo$. We write
$$
\phimax=\sup\{\phi(\g): \g\in\Ga\}.
$$
When $\phi$ is summable, we have that
\begin{equation}\label{eq:finite}
\phimax<\oo, \qq \big|\{\gamma\in \Ga:\phi(\gamma)\geq C\}\big| < \oo \text{ for } C > 0.
\end{equation}
We misuse notation by setting $\phi(\a,\g) = \phi(\a^{-1}\g)$ for $\a,\g\in\Ga$.

We consider SAWs on the complete graph $K=(\Ga,E)$ with
vertex-set  $\Ga$ and weights $w=w_\phi$
as in \eqref{eq:wtprod}. We say that $\phi$ \emph{spans} $\Ga$ if,
for $\eta,\g\in\Ga$, there exists a SAW on $K$ from $\eta$ to $\g$
with strictly positive weight. This is equivalent to requiring that $\supp(\phi)$ generates $\Ga$. 
Let $\Phi$ be the set of functions $\phi:\Ga\to [0,\oo)$ that are 
symmetric, summable, and which span $\Ga$, and let $\Phifin$ be the subset 
of $\Phi$ containing functions $\phi$ with bounded support. Functions in
$\Phi$ are called \emph{weight functions}.

Let $\phi\in\Phi$.
Since we shall be interested only in SAWs $\pi$ with strictly positive weights $w(\pi)$,
we shall have use for the subgraph 
\begin{equation}\label{eq:Gphi}
K_\phi=(\Ga,E_\phi) \q\text{where}\q 
E_\phi =\bigl\{\langle u,v\rangle: u^{-1}v \in \supp(\phi)\bigr\}.
\end{equation}

\begin{proposition}\label{t42}
Let $\Ga$ be finitely generated and virtually $\sH$-indicable, and let $\phi\in\Phi$. 
The graph $K_\phi$ of \eqref{eq:Gphi}  
possesses a \sghf\ of the form $(h,\sH)$.
\end{proposition}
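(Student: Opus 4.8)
The plan is to deduce Proposition \ref{t42} from Theorem \ref{thm:indic}(a), the only subtlety being that $\supp(\phi)$ need not equal the original generating set $F$ for which $\Ga$ is virtually $(\sH,F)$-indicable. First I would record what $\phi\in\Phi$ gives us: the graph $K_\phi=(\Ga,E_\phi)$ has edge-set $E_\phi=\{\langle u,v\rangle: u^{-1}v\in\supp(\phi)\}$, and since $\phi$ spans $\Ga$, the set $S:=\supp(\phi)$ generates $\Ga$; moreover $S$ is symmetric (as $\phi$ is symmetric) and does not contain $\id$ (as $\phi(\id)=0$). Hence $K_\phi$ is exactly the Cayley graph $\mathrm{Cay}(\Ga,S)$ of $\Ga$ with respect to the generating set $S$, in the (not necessarily locally finite) sense used in Definition \ref{def:sghf}.

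The key step is then simply to invoke Theorem \ref{thm:indic}(a) with the generating set $S$ in place of $F$: since $\Ga$ is virtually $\sH$-indicable, there is a surjective homomorphism $F':\sH\to\ZZ$ with $[\Ga:\sH]<\oo$, i.e.\ $\Ga$ is virtually $(\sH,F')$-indicable, and this property of the pair $(\Ga,\sH)$ is intrinsic to the group and does not reference any particular Cayley graph. Theorem \ref{thm:indic}(a) asserts that \emph{any} Cayley graph of a virtually $(\sH,F')$-indicable group — in particular $\mathrm{Cay}(\Ga,S)=K_\phi$ — possesses a \sghf\ of the form $(h,\sH)$. That is precisely the conclusion of Proposition \ref{t42}.

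The one point that deserves a sentence of care — and the part I would flag as the only real obstacle — is checking that $K_\phi$ genuinely falls within the scope of Definition \ref{def:sghf} and Theorem \ref{thm:indic}, namely that it is a legitimate Cayley graph of $\Ga$. This requires that $S=\supp(\phi)$ be a symmetric generating set not containing the identity, which I verified above from the three defining properties of membership in $\Phi$ (symmetric, summable, spanning) together with $\phi(\id)=0$; summability plays no role here beyond being part of the standing hypothesis. One should also note that $K_\phi$ may be non-locally-finite when $S$ is infinite, but Definition \ref{def:sghf} and Theorem \ref{thm:indic}(a) are stated precisely to allow this, so no extra work is needed. I would therefore present the proof in two or three lines: identify $K_\phi$ with $\mathrm{Cay}(\Ga,\supp(\phi))$, observe that virtual $\sH$-indicability of $\Ga$ is a group-theoretic hypothesis independent of the choice of generating set, and apply Theorem \ref{thm:indic}(a).
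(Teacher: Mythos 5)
Your proof is correct and follows exactly the paper's own route: the paper's proof is the one-line observation that the result is a corollary of Theorem~\ref{thm:indic}(a) since $\supp(\phi)$ is a generating set of $\Ga$, which is precisely the core of your argument. The extra checks you supply (symmetry, $\id\notin\supp(\phi)$, non-local-finiteness being permitted) are reasonable fleshing-out of the same idea.
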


\begin{proof}
This is a corollary of Theorem \ref{thm:indic}(a), since $\supp(\phi)$ is a generator-set
of $\Ga$.
\end{proof}


We turn to the notion of the \emph{length} of a walk. 
Let $\La$ be the space of all functions $\len:\Ga\to (0,\oo)$ satisfying
\begin{letlist}
\item $\len$ is symmetric in that 
$\len(\g)=\len(\g^{-1})$ for $\g\in\Ga$,
\item $\len$  satisfies
\begin{equation}\label{eq:lenmin}
\lenmin>0 \q\text{where}\q \lenmin:=\inf\{\len(\g): \g\in\Ga\}.
\end{equation}
\end{letlist}
We extend the domain of $\len\in\La$ to the edge-set $E$ by
defining the \emph{$\len$-length} of $e \in E$ to be 
$$
\len(e) =\len(u^{-1}v), \qq e=\langle u, v\rangle \in E. 
$$
The $\len$-length of a SAW $\pi=(\pi_0,\pi_1,\dots, \pi_n)$ is thus
$$
\len(\pi) = \sum_{i=1}^{n} \len(e_i), \qq e_i=\langle \pi_{i-1},\pi_i\rangle.
$$
Evidently, 
\begin{equation}
 n\leq \len(\pi)/\lenmin.
 \label{lwl}
\end{equation}

\begin{example}\label{ex:comb}
Let $\phi\in\Phi$ and suppose $\len\equiv 1$. Then $\len$ restricted
to $K_\phi$ is the usual graph-distance.
\end{example}

\begin{example}\label{ex:weighted}
Let $\phi\in\Phi$, and suppose $\len(\g)=1/\phi(\g)$ for $\g\in\supp(\phi)$.
The values taken by $\len$ off $\supp(\phi)$ are immaterial to the study
of weighted SAWs $\pi$, since  $w_\phi(\pi)=0$ if $\pi$ traverses any edge
not in $\supp(\phi)$.
\end{example}

Let $\Ga$ be virtually $(\sH,F)$-indicable, and 
let $(h_F,\sH)$ be the \sghf\ constructed after Theorem \ref{thm:indic}.
A SAW $\pi$ on $K$ 
is a \emph{bridge} (with respect to $h_F$) if \eqref{eq:bridge} holds.
Let $\phi\in\Phi$ and $\len\in\La$. For $v \in V$, 
let $\Si(v)$ (\resp, $B(v)$) be the set of SAWs (\resp, bridges) on $G$
starting at $v$, and abbreviate $\Si(\id)=\Si$. For $c>0$, let
\begin{equation}
\begin{aligned}
\sigma_{m,c}^{\len}(v)&=\{\pi\in \Si(v):m \le \len(\pi) < m+c\},\\
\be_{m,c}^{\len}(v)&=\{\pi\in B(v):m \le \len(\pi) < m+c\}.
\end{aligned}\label{mp-}
\end{equation}
We shall abbreviate $\si_{m,c}^{\len}(v)$ (\resp, $\be_{m,c}^{\len}(v)$)
to $\si_{m,c}(v)$ (\resp, $\be_{m,c}(v)$) when confusion is unlikely to arise. 
Since $K$ is transitive and $\len$ is $\Ga$-invariant, 
$\si_{m,c}(v)$ does not depend on the choice of $v$, and we write
$\si_{m,c}$ for its common value. In contrast, $\be_{m,c}(v)$ may depend
on $v$ since the height function $h$ is generally only $\sH$-difference-invariant. 
Although we shall take limits as $m\to\oo$
through the integers, it will
be useful sometimes to allow $m$ to be non-integral in \eqref{mp-}.

\begin{theorem}\label{prop:1}
Let $\Ga$ be finitely generated and virtually $(\sH,F)$-indicable with \sghf\ $h_F$, and let $\phi\in\Phi$
and $\len\in\La$.
\begin{letlist}
\item
For all sufficiently large $c$, the limits
\begin{equation}\label{mp}
\mu_{\phi,\len}=\limsup_{m\rightarrow\infty}w_\phi (\sigma_{m,c})^{{1}/{m}},\q
\beta_{\phi,\len}=\lim_{m\rightarrow\infty} w_\phi(\beta_{m,c}(v))^{{1}/{m}},\qq v \in \Ga,
\end{equation}
exist and are independent of the choice of $c$ (and of the choice of $v$, in the latter case). They
are called the (weighted) \emph{connective constant} and (weighted) 
\emph{bridge constant}, \resp.
\item 
If $\lenmax:=\sup\{\len(\g): \g\in\Ga,\, \g\in\supp(\phi)\}$ satisfies $\lenmax<\oo$,
then $\limsup$ may be replaced by $\lim$ in \eqref{mp}.
\item
We have that 
\begin{equation}\label{eq:207}
0 < \be_{\phi,\len}\le \mu_{\phi,\len} \le\max\bigl\{1,w(\Ga)^{1/\lenmin}\bigr\}.
\end{equation}
\end{letlist}
\end{theorem}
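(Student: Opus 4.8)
The plan is to establish the three parts in order, using subadditivity/supermultiplicativity of the relevant weighted counts in the parameter $m$, together with the bridge-decomposition machinery of Hammersley--Welsh as adapted in \cite{GL-loc}, now carried out with the generalized $\len$-length in place of the edge count.

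For part (a), I would first treat the bridge constant. The key observation is that concatenation of bridges is still a bridge: if $\pi$ is a bridge from $v$ with $\len(\pi)\in[m,m+c)$ and $\pi'$ is a bridge from its terminal vertex with $\len(\pi')\in[n,n+c)$, the concatenation is a bridge from $v$ of $\len$-length in $[m+n,m+n+2c)$. This does not immediately give clean supermultiplicativity of $w_\phi(\be_{m,c}(v))$ because the length windows add with slack, but for $c$ large (specifically $c\ge 2\lenmin$, say, so that every single edge-step keeps us in a usable window) one obtains, after fixing a vertex $v$ of each $\sH$-orbit and using $\sH$-difference-invariance of $h$ together with the connecting SAWs $\nu_{i,j}$ of bounded edge-length, an inequality of the form $w_\phi(\be_{m+n,c}(v)) \ge \eta\, w_\phi(\be_{m,c}(v))\, w_\phi(\be_{n,c}(v))$ for a constant $\eta>0$ depending on $(\phi,h,\len)$ but not on $m,n$. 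Fekete's lemma applied to $\log\bigl(\eta\, w_\phi(\be_{m,c}(v))\bigr)$ then yields existence of the limit $\be_{\phi,\len}$; independence of $c$ follows because enlarging $c$ only inserts finitely many extra windows whose union contributes a multiplicative factor that is $m$-independent; independence of $v$ follows from Proposition \ref{t42} (so $K_\phi$ has an $\sH$-invariant structure) together with the orbit-transfer argument already used in the proof of Proposition \ref{prop:mequalsb}. For the connective constant one similarly has, by concatenation of SAWs at a common vertex, a \emph{super}multiplicative-up-to-slack bound $w_\phi(\si_{m,c})\,w_\phi(\si_{n,c}) \le w_\phi(\si_{m+n,2c})$ (this is only an inequality because concatenating two SAWs need not be self-avoiding — but the reverse direction, submultiplicativity, is what one actually wants here: each SAW of length in $[m+n,m+n+c)$ splits at its first passage through height $m/ (\text{something})$... ). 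The cleaner route, and the one I would follow, is to use the fact that $\len(\pi)\ge n\lenmin$ from \eqref{lwl} so that the number of edges is controlled, reduce to a subadditive statement in the integer $\lfloor m\rfloor$, and extract $\limsup_m w_\phi(\si_{m,c})^{1/m}$ directly as a quantity independent of $c$ by the same finite-window comparison; genuine convergence (not just $\limsup$) is deferred to part (b).

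For part (b), the additional hypothesis $\lenmax<\oo$ on the support means every edge of $K_\phi$ has $\len$-length in $[\lenmin,\lenmax]$, hence an $n$-step SAW has $\len(\pi)\in[n\lenmin, n\lenmax]$. This is exactly the condition that makes the length windows $[m,m+c)$ behave, up to bounded overlap, like edge-count windows: one can now genuinely concatenate within controlled windows and obtain two-sided (super- and sub-) multiplicative bounds for $w_\phi(\si_{m,c})$, so Fekete's lemma gives an honest limit rather than merely a $\limsup$. Concretely I would show $w_\phi(\si_{m,c})^{1/m}$ and $w_\phi(\si_{m,c})^{1/m}$ have the same $\liminf$ and $\limsup$ by sandwiching $\si_{m,c}$ between concatenations of $\lceil m/M\rceil$ pieces each of length roughly $M$, for $M$ large, the error terms being powers of $\lenmax/\lenmin$ that vanish under the $1/m$ root.

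For part (c), the inequality $\be_{\phi,\len}\le\mu_{\phi,\len}$ is immediate since every bridge is a SAW, so $w_\phi(\be_{m,c}(v))\le w_\phi(\si_{m,c})$ for the $v$ achieving the bridge count. Strict positivity $\be_{\phi,\len}>0$: since $\phi$ spans $\Ga$ and $\Ga$ is infinite, $K_\phi$ has a \sghf\ by Proposition \ref{t42}, and one can build, by Proposition \ref{prop:mequalsb}-style arguments, a single nontrivial bridge of positive weight and then concatenate $k$ copies to get a bridge of $\len$-length $\asymp k$ and weight $\asymp(\text{const})^k>0$; taking roots gives $\be_{\phi,\len}\ge(\text{const})>0$. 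For the upper bound $\mu_{\phi,\len}\le\max\{1,w(\Ga)^{1/\lenmin}\}$: an $n$-step SAW from $\id$ has total weight, summed over all $n$-step \emph{walks} (dropping self-avoidance), at most $w(\Ga)^n$ since at each step the sum of $\phi$ over all possible increments is exactly $w(\Ga)$; hence $w_\phi(\si_{m,c})\le \sum_{n\ge 1: n\lenmin \le m+c} w(\Ga)^n$. If $w(\Ga)\le 1$ this sum is at most the number of admissible $n$, which is $O(m)$, so the $1/m$-th root tends to $1$; if $w(\Ga)>1$ the sum is dominated by its largest term $w(\Ga)^{n_{\max}}$ with $n_{\max}\le (m+c)/\lenmin$, whose $1/m$-th root tends to $w(\Ga)^{1/\lenmin}$. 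Combining gives \eqref{eq:207}.

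\medskip
\textbf{Main obstacle.} The delicate point is part (a): making the subadditivity argument airtight when $\len$ is a genuinely general length function with possibly \emph{unbounded} values on $\supp(\phi)$ (so $\lenmax=\infty$ is allowed in part (a)). A single edge can then jump the $\len$-length by an arbitrarily large amount, so the window $[m,m+c)$ cannot be "hit exactly" by appending edges, and the naive concatenation can overshoot any fixed window. The resolution — which I expect is the crux of the authors' argument and the reason $c$ must be taken "sufficiently large" — is that overshoot is harmless for the \emph{bridge} constant because one takes a \emph{minimum} over starting vertices and uses the connecting SAWs $\nu_{i,j}$ to reposition; and for the connective constant one only claims a $\limsup$ in part (a), which survives overshoot. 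Getting the bookkeeping of these finite windows and the orbit-transfer constants uniform in $m$ is the technical heart of the proof.
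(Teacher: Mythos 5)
The overall architecture — Fekete's lemma for the bridge constant, orbit-transfer via the connecting SAWs $\nu_{i,j}$ for $v$-independence, inclusion bounds of the form $\be_{m,A}(v)\subseteq\be_{m,c}(v)\subseteq\bigcup_i\be_{m+i,A}(v)$ for $c$-independence, and the $w(\Si_i)\le w(\Ga)^i$ bound together with \eqref{lwl} for part (c) — is sound and matches the paper in spirit. Part (c) is essentially correct. The real issue is in part (a), where you do not correctly identify the device that converts the slack in $\len$-length windows into a usable supermultiplicative inequality, and this is the technical heart of the statement.

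Three specific problems. First, your threshold ``$c\ge 2\lenmin$'' is wrong: $\lenmin$ is the infimum of $\len$ over all of $\Ga$, which says nothing about how much a single forced step can overshoot a window. The paper's threshold $A$ in \eqref{eq:102} is the supremum of $\lceil\len(\a_v)\rceil$ over the \emph{extension} edges $e_v=\langle v,v\a_v\rangle$ (an edge with $h(v\a_v)>h(v)$ chosen at each $v$), which is finite only because $\sH$ acts quasi-transitively and $h$ is \hdi; nothing about $\lenmin$ controls it. Second, the inequality you posit, $w_\phi(\be_{m+n,c}(v)) \ge \eta\, w_\phi(\be_{m,c}(v))\, w_\phi(\be_{n,c}(v))$, does not follow from concatenation: concatenating produces a bridge with $\len$-length in $[m+n,m+n+2c)$, not in $[m+n,m+n+c)$, and there is no way to trim it back. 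The paper instead \emph{prolongs} the concatenation by repeatedly appending extension edges of bounded $\len$-length until the result lands in a higher target window of width $A$; this gives a shifted supermultiplicativity $w(\be_{m,A})w(\be_{n,A})\le C\,w(\be_{m+n+A,A})$ (the content of Lemma \ref{lem:inter} plus \eqref{eq:101}), and the $+A$ shift must be carried through Fekete's lemma. Third, you conflate the roles of the $\nu_{i,j}$ and the extensions: the connecting SAWs $\nu_{i,j}$ are used only for the orbit-transfer argument that gives $v$-independence, whereas ``repositioning within a window'' is done by the extension edges; writing that overshoot is handled by ``using $\nu_{i,j}$ to reposition'' is not a correct description of either mechanism. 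Once these are fixed, your part (b) sandwiching idea is morally right but less clean than the paper's bootstrap \eqref{eq:567}, which turns the concatenation estimate into honest submultiplicativity in one step using $\lenmax<\oo$; and the limsup in part (a) for $\mu_{\phi,\len}$ needs no concatenation argument at all — it exists automatically, and $c$-independence is pure window arithmetic.
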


\begin{example}[A non-summable weight function]
 It is possible to have $w(\Ga)=\oo$ and $\mu_{\phi,\len}<\oo$. 
Let $\Ga=\ZZ$, and let $K=(\ZZ,E)$ be the complete graph on $\Ga$. Take $\len$ 
and $\phi$ by
\begin{equation*}
\len(n)=|n|, \q \phi(n) = \frac 1{|n|}, \qq n\in \ZZ,
\end{equation*}
whence $w(\Ga)=\infty$. The edge $\langle m,n\rangle\in E$
has  length $\len(\langle m,n\rangle)=|m-n|$. 
Let $G_0=(\ZZ,E_0)$, where $E_0$ is the set of unordered pairs $m,n$ with $|m-n|=1$. 
Let $\pi=(n_0=0,n_1,n_2,\dots,n_k)$ be a SAW on $K$ with
$\len$-length $\len(\pi)=m$. 
For $1\leq i\leq k$, let $\nu_i$ be the shortest path on $G_0$ from $n_{i-1}$ to $n_i$,
and let $\wt\pi$ be the $m$-step path on $G_0$ obtained by following the paths $\nu_1,\nu_2,\dots,\nu_k$ in sequence. 
There are $2^m$ $m$-step walks on $G_0$ from $0$, and each such walk $\wt\pi$
arises (as above) from at most $2^m$ distinct SAWs $\pi$ on $K$. 
Since each SAW $\pi$ on $K$ has weight satisfying $w(\pi)\le 1$, we have 
$w(\si_{m,1})\le 4^m$, so that $\mu_{\phi,\len}\leq 4$.
\end{example}

We have no proof in general of the full convergence 
$w_\phi(\si_{m,c})^{1/m} \to \mu_{\phi,\len}$, 
but we shall see in Theorem \ref{thm:group} that this holds subject to an additional condition. 

Theorem \ref{prop:1}(a) assumes a lower bound on the 
length $c$ of the intervals in \eqref{mp-}.
That some lower bound is necessary is seen by considering Example \ref{ex:weighted}
with a weight function $\phi$  every non-zero value 
of which has the form $2^{-s}$ for some integer $s\ge 1$. When this holds, every
$\len$-length is a multiple of $2$, implying that $\si_{m,c}, \be_{m,c} = \es$ 
if $m$ is odd and $c<1$.
The required lower bounds on $c$ are discussed after the statement 
of Proposition \ref{prop:beta}; see \eqref{eq:102}.

We introduce next a condition on the triple $(\phi,\len,h_F)$.
Let $\Ga$ be virtually $(\sH,F)$-indicable with \sghf\ $(h_F,\sH)$,
and let $\phi\in\Phi$.
For $\eps\in [1, 2)$ and $C>0$, let $\La_\eps=\La_\eps(C,\phi)$ be the
subset of $\La$ containing functions $\len$ satisfying the following H\"older condition
for the height function:
\begin{equation}\label{eq:cond1}
|h_F(u)-h_F(v)|\leq C[\len(u^{-1}v)]^\eps, \qq  u,v\in\Ga,\ \phi(u^{-1}v)>0.
\end{equation}
Note that the usual graph-distance of Example \ref{ex:weighted}
satisfies \eqref{eq:cond1} (with $\eps=1$ and suitable $C$)
if $\phi$ has bounded support; the converse is false, as illustrated in Example \ref{ex5}.

\begin{example}\label{gen-dist}
Let $\Ga$ be finitely generated with finite generating set $S$ and \ghf\ $(h,\sH)$,
and let $G_S$ be the corresponding Cayley graph of $\Ga$.
For $u\in \Ga$, let $\len(u)$ be the number of edges in the shortest
path of $G_S$ from $\id$ to $u$. Then $\ell$ satisfies \eqref{eq:cond1}
with $\eps=1$ and suitable $C$. Readers may prefer to fix their ideas on such $\len$.
Property \eqref{eq:cond1} will feature briefly in the proofs of the ensuing theorems.
\end{example}

Here is our first main theorem, proved in Section \ref{sec:pf3.6}.
Once again, we require $c \ge A$ where $A$ will be given in \eqref{eq:102}.

\begin{theorem}\label{thm:group}
Let $\Ga$ be  finitely generated and virtually $(\sH,F)$-indicable with \sghf\ $(h_F,\sH)$, 
and let $\phi\in\Phi$ and $\len\in\La_\eps(C,\phi)$ for some $\eps\in [1,2)$ and $C>0$.
\begin{letlist}
\item
We have that $\mu_{\phi,\len}=\be_{\phi,\len}$, and furthermore,
for all sufficiently large $c$,
\begin{equation}\label{eq:sigmalim}
\mu_{\phi,\len}=\lim_{m\to\oo} w_\phi (\sigma_{m,c})^{{1}/{m}}.
\end{equation}
\item
{\rm (Monotonicity of $\mu$)}
If $\nu\in\Phi$ and $\nu\le\phi$, then $\len\in\La_\eps(C,\nu)$ and
$\mu_{\nu,\len} \le \mu_{\phi,\len}$.
\end{letlist}
\end{theorem}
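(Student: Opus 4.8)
The plan is to prove Theorem~\ref{thm:group} in two parts, following the architecture already established for the locally finite case in Theorem~\ref{thm1} and in \cite{GL-loc,HW62}, but now tracking the $\len$-length in place of the number of steps and using \eqref{eq:cond1} to control the arithmetic of spans.

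\textbf{Part (a): equality of connective and bridge constants.}
The inequality $\be_{\phi,\len}\le\mu_{\phi,\len}$ is already in \eqref{eq:207}, so the content is the reverse inequality $\mu_{\phi,\len}\le\be_{\phi,\len}$, together with the upgrade of $\limsup$ to $\lim$ in \eqref{eq:sigmalim}. First I would adapt the Hammersley--Welsh decomposition: any SAW $\pi\in\si_{m,c}$ can be cut at its running-minimum and running-maximum heights into an alternating concatenation of (reversed) half-space walks, and each half-space walk decomposes further into a sequence of bridges of strictly decreasing spans. The crucial combinatorial input is the Hardy--Ramanujan partition bound (Remark~\ref{rem3}): the number of ways to write a given quantity as a sum of distinct positive integers is $\exp(O(\sqrt{\cdot}))$. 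In the unweighted/locally finite world one applies this to the span, which is an integer bounded by a constant times the step-count $n$; here the span of $\pi$ is still a (bounded) integer, but what we must control is how many ``pieces'' the decomposition can have and how the span grows relative to $\len(\pi)=m$. This is exactly where \eqref{eq:cond1} enters: it forces $|h_F(\pi_0)-h_F(\pi_n)|\le C\sum_i[\len(e_i)]^\eps$, and since $\eps<2$, a convexity/Jensen estimate gives $\sum_i[\len(e_i)]^\eps \le m^{\eps-1}\sum_i\len(e_i)\cdot(\text{something})$, i.e.\ the span is $O(m^{\eps/1})$ up to the step count --- more precisely one gets a bound of the form $\spz(\pi)\le C' m^{\eps/2}\cdot\sqrt{\cdot}$ after combining with \eqref{lwl}, so that the number of spans appearing, and the number of distinct-span partitions, is subexponential in $m$. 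The aggregate-weight version of the Hammersley--Welsh inequality then reads, schematically,
\begin{equation*}
w_\phi(\si_{m,c}) \le e^{o(m)}\sum_{k} w_\phi(\be_{m_1,c}(v_1))\cdots w_\phi(\be_{m_k,c}(v_k)),
\end{equation*}
with $\sum m_j$ close to $m$ and the spans $m_1,\dots,m_k$ distinct; submultiplicativity of $w_\phi(\be_{\bullet,c})$ (the analogue of \eqref{eq:b-subadditive}, with the $\len$-length being additive) collapses the product to $\be_{\phi,\len}^{m(1+o(1))}$. Taking $m$-th roots and $\limsup$ gives $\mu_{\phi,\len}\le\be_{\phi,\len}$; combined with the lower bound this forces the $\limsup$ in \eqref{mp} to be a genuine limit, yielding \eqref{eq:sigmalim}.

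\textbf{Part (b): monotonicity.} This part is comparatively soft. If $\nu\le\phi$ pointwise and $\nu\in\Phi$, then $\supp(\nu)\subseteq\supp(\phi)$, so the H\"older condition \eqref{eq:cond1} for the pair $(\phi,\len)$ immediately restricts to the (smaller) support of $\nu$, giving $\len\in\La_\eps(C,\nu)$ with the same $C$. For the inequality $\mu_{\nu,\len}\le\mu_{\phi,\len}$: for each SAW $\pi$ we have $w_\nu(\pi)=\prod_i\nu(e_i)\le\prod_i\phi(e_i)=w_\phi(\pi)$, hence $w_\nu(\si_{m,c})\le w_\phi(\si_{m,c})$ for every $m$ and $c$; taking $m$-th roots and using that both limits exist (by part (a) applied to each weight function, for $c$ large enough) gives $\mu_{\nu,\len}\le\mu_{\phi,\len}$. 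One minor point to check is that a single $c$ can be chosen large enough to work simultaneously for both $\nu$ and $\phi$ --- this is fine since the required lower bound $A$ in \eqref{eq:102} depends only on data ($\lenmin$, the height function, the orbit structure) that is unchanged, and one may simply take the larger of the two thresholds.

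\textbf{Main obstacle.} The delicate step is the span estimate driving the subexponential error term in Part~(a): one must show that, under \eqref{eq:cond1} with $\eps<2$, the span of an $\len$-length-$m$ SAW is small enough (sublinear, or at worst $o(m)$ after the Hardy--Ramanujan square-root is applied) that $\exp(O(\sqrt{\spz(\pi)}))=e^{o(m)}$ uniformly. The exponent $\eps<2$ is precisely the threshold that makes $[\len(e)]^\eps$ controllable by $\len(e)$ along the walk via Hölder/Jensen combined with $n\le m/\lenmin$; the argument breaks at $\eps=2$. A secondary technical nuisance is handling the quasi-transitive (non-transitive) case: the bridges $\nu_{i,j}$ connecting orbit representatives, with their associated constants $\phi_\nu,\thmin,\thmax$ from \eqref{eq:phimin-}--\eqref{eq:phim}, must be inserted when splicing half-space walks together, and one must verify these splices contribute only a bounded multiplicative factor per join and $o(m)$ joins overall --- this parallels the proof of Proposition~\ref{prop:mequalsb} but requires care that the inserted $\nu_{i,j}$ have bounded $\len$-length, which holds because they are fixed finite SAWs.
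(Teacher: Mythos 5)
Your proposal follows essentially the same route as the paper's proof: a Hammersley--Welsh decomposition of a SAW into two half-space walks at its running minimum, a further decomposition of each half-space walk into bridges with strictly decreasing spans, a Hardy--Ramanujan bound on the number of distinct-part partitions of the span, and the quasi-transitive splicing via the fixed finite connectors $\nu_{i,j}$; part (b) is likewise handled the same soft way. This matches the paper's Propositions \ref{hsb3} and \ref{swb4}.

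One localized imprecision worth flagging, since it is the crux of the argument. Your ``convexity/Jensen'' step and the displayed bound $\spz(\pi)\le C' m^{\eps/2}\cdot\sqrt{\cdot}$ conflate the span bound with the output of Hardy--Ramanujan. The clean statement (the paper's Proposition \ref{prop2}) is that for any SAW $\pi$ with $w_\phi(\pi)>0$,
\[
\spz(\pi)\ \le\ C\sum_i \len(e_i)^\eps\ \le\ C\Bigl(\sum_i \len(e_i)\Bigr)^{\eps}\ =\ C\,\len(\pi)^\eps ,
\]
the middle inequality being superadditivity of $t\mapsto t^\eps$ on $[0,\infty)$, which uses precisely $\eps\ge 1$ (not Jensen, and not $\eps<2$, and not \eqref{lwl}). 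Thus the span is $O(m^\eps)$ — possibly superlinear — and it is only after the Hardy--Ramanujan square root that one gets $e^{O(m^{\eps/2})}=e^{o(m)}$, where $\eps<2$ is finally used. You state the correct endpoint in your ``Main obstacle'' paragraph, so there is no gap in the intended argument, but the intermediate chain as written would not survive scrutiny: $\spz(\pi)$ is not $O(m^{\eps/2})$ and need not be sublinear.
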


A sufficient condition for \eqref{eq:cond1} is presented in Lemma \ref{l44}.

\begin{remark}[Condition \eqref{eq:cond1}]\label{rem3}
Some condition of type \eqref{eq:cond1} is necessary 
for the proof that $\mu_{\phi,\len}=\be_{\phi,\len}$
(presented in Section \ref{sec:pf3.6}) for the following reason. 
A key estimate in the proof of \cite{HW62} concerning the bridge constant 
is the classical Hardy--Ramanujan \cite{HR} estimate of $\exp\bigl(\pi\sqrt{n/3}\bigr)$  
for the number of ordered partitions of an integer $n$.
It is important for the proof that this number is $e^{\o(n)}$. Inequality \eqref{eq:cond1} implies
that the aggregate height difference along a SAW $\pi$ has order no greater than 
$n:= \lfloor (\len(\pi))^\eps\rfloor$ with
$\eps<2$, which has $\exp\bigl(\o(\len(\pi))\bigr)$ ordered partitions.
(See Proposition \ref{prop2}.)
\end{remark}

\begin{remark}[Working with graph-distance]\label{rem4}
Let us work with the graph-distance of Example \ref{ex:comb}
(so that the length of a walk is the number of its edges), 
with $\Si_n$ and $\be_n$ given as usual.
Then $w(\Si_n)$ is sub-multiplicative, whence the limit $\mu=\lim_{n\to\oo} w(\Si_n)^{1/n}$ exists.
By Theorem \ref{prop:1}(c), $\mu<\oo$ if $w(\Ga)<\oo$.
It is not hard to see the converse, as follows. Suppose $w(\Ga)<\oo$.
Let
$\pi=(\pi_0=\id,\pi_1,\dots,\pi_n)$ be a SAW with $w(\pi)>0$, 
and write $\Ga'=\Ga\setminus\{\pi_j^{-1}: 0 \le j \le n\}$.
Then $\Si_{n+1}$ contains the walks $\{\gamma+\pi: \gamma\in\Ga'\}$, where 
$\gamma+\pi:=(\id,\gamma,\gamma \pi_1, \dots, \gamma \pi_n)$. Therefore, $w(\Si_{n+1}) \ge w(\Ga')w(\pi)=\oo$
for $n \ge 1$.

We have no general proof
of the equality of $\mu$ and the bridge limit $\be$ when $\supp(\phi)$ is unbounded,
although this holds 
subject to \eqref{eq:cond1} with $\len$ taken as the graph-distance of Example
\ref{ex:comb}.
\end{remark}

\begin{example}[Still working with graph-distance]\label{ex5}
Let $\phi\in\Phi$ and  take $\len$ to be graph-distance on $K_\phi$, as in Example \ref{ex:comb}.
By Theorem \ref{prop:1}(b), the limit $\mu_{\phi,\len}=\lim_{m\to\oo} w(\si_{m,c})^{1/m}$
exists, and it is easy to see that one may take any $c>0$. 
Here is an example in which $K_\phi$ is not locally finite.
Let $\Ga=\ZZ^2$, 
and let
\begin{equation*}
\phi(\langle p,q\rangle)=\begin{cases}
\bigl(|p|^2+|q|^2\bigr)^{-1}&\text{if either $p=0$,  or $(p,q)=(\pm 1, 0)$},\\ 
0&\text{otherwise}.
\end{cases}
\end{equation*}
Then $K_\phi$ is the Cayley graph of $\Gamma$ in which $(m,n)$ and $(m',n')$ 
are joined by an edge if and only if either $m=m'$ or $(m',n')=(m\pm 1, n)$.
Define the height function by $h(m,n)=m$, and note that \eqref{eq:cond1} holds
for suitable $C$.
\end{example}

\begin{remark}\label{rem1}
Let $\Ga$ be finitely generated and virtually $(\sH,F)$-indicable, 
and let $\phi\in\Phi$ and $\len\in\La_\eps(C,\phi)$ for some $\eps\in[1,2)$ and $C>0$. 
Since $\mu_{\phi,\len}$ is independent of the
choice of triple $(h_F,\sH,F)$, so is $\be_{\phi,\len}$.
\end{remark}

\section{Continuity of the connective constant}\label{sec:cty}

A continuity theorem for connective constants is proved in this section.
We work on the space $\Phi\times\La$ with distance function
\begin{align}\label{eq:dist3}
\Dsup\bigl((\phi,\len_1), (\psi,\len_2)\bigr)
= \sup_\g \left|\frac{\phi(\g)-\psi(\g)}{\phi(\g)+\psi(\g)}\right|+ \sup_\g \left| \frac{\len_1(\g)-\len_2(\g)}{\len_1(\g)+\len_2(\g)}\right|,
\end{align}
where the  suprema are  over the  set $\supp(\phi)\cup\supp(\psi)$.
This may be compared with \eqref{eq:new902}.

The following continuity theorem is our second main theorem, and it is proved in Section \ref{sec:pfcont}. 
For $C,W>0$ and $\eps\in[1,2)$, we write $\Phi\circ\La_\eps(C,W)$
for the space of all pairs $(\phi,\len)$ satisfying $\phi\in\Phi$, 
$\len \in \La_\eps(C,\phi)$, and $\mu_{\phi,\len}\le W$.

\begin{theorem}[Continuity theorem]\label{thm:cont2}
Let $\Ga$ be finitely generated and virtually $(\sH,F)$-indicable. 
Let  $C,W>0$ and $\eps\in[1,2)$. The connective constant $\mu$
is a continuous function on the space $\Phi\circ \La_\eps(C,W)$ 
endowed with the distance function $\Dsup$ of \eqref{eq:dist3}.
\end{theorem}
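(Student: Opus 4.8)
The plan is to deduce continuity of $\mu$ from three ingredients: the equality $\mu_{\phi,\len}=\be_{\phi,\len}$ from Theorem \ref{thm:group}(a); the sub-multiplicative/super-multiplicative structure of bridge counts; and uniform comparison estimates for $w_\phi$ versus $w_\psi$ and for $\len_1$-length versus $\len_2$-length when $\Dsup\bigl((\phi,\len_1),(\psi,\len_2)\bigr)$ is small. The guiding principle is that a small $\Dsup$-perturbation distorts every edge-weight by a factor close to $1$ and every edge-$\len$-length by a factor close to $1$, so along a SAW $\pi$ with $k$ edges both $w(\pi)$ and $\len(\pi)$ are distorted multiplicatively by at most $(1+\delta)^k$-type factors; since $k\le\len(\pi)/\lenmin$ by \eqref{lwl}, this translates into control of the exponential growth rate $\mu$.

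First I would record the pointwise comparison. If $\Dsup\bigl((\phi,\len_1),(\psi,\len_2)\bigr)\le\delta$, then for every $\g\in\supp(\phi)\cup\supp(\psi)$ we have $|\phi(\g)-\psi(\g)|\le\delta(\phi(\g)+\psi(\g))$, hence (when $\phi(\g),\psi(\g)>0$) $\tfrac{1-\delta}{1+\delta}\le\psi(\g)/\phi(\g)\le\tfrac{1+\delta}{1-\delta}$, and similarly $\tfrac{1-\delta}{1+\delta}\le\len_2(\g)/\len_1(\g)\le\tfrac{1+\delta}{1-\delta}$; write $a(\delta)=\tfrac{1+\delta}{1-\delta}$. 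One must first dispose of the (generic) case $\supp(\phi)=\supp(\psi)$: if $\delta<1$ then $\phi(\g)>0\iff\psi(\g)>0$, so the two supports coincide and $K_\phi=K_\psi$; I would restrict attention to $\delta<1$ from the outset, so only this case occurs. Then for any SAW $\pi$ on $K_\phi$ with $w_\phi(\pi)>0$ and $k$ edges, $w_\psi(\pi)/w_\phi(\pi)\in[a(\delta)^{-k},a(\delta)^{k}]$ and $\len_2(\pi)/\len_1(\pi)\in[a(\delta)^{-1},a(\delta)]$, with $k\le\len_1(\pi)/\lenmin$.

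Next I would transfer this to the constants. Fix $(\phi,\len_1)\in\Phi\circ\La_\eps(C,W)$ and take $(\psi,\len_2)$ close to it in $\Dsup$. By Theorem \ref{thm:group}(a) applied to $(\phi,\len_1)$ (which requires $\len_1\in\La_\eps(C,\phi)$, given), $\mu_{\phi,\len_1}=\be_{\phi,\len_1}=\lim_m w_\phi(\be_{m,c}(v))^{1/m}$ for large $c$. I would show $(\psi,\len_2)\in\Phi\circ\La_\eps(C',W')$ for controlled $C',W'$ — this uses the comparison above to see $\len_2\in\La_\eps(C',\psi)$ with $C'=C\cdot a(\delta)^{\eps}\cdot(\text{const})$ and $W'=W^{a(\delta)/\lenmin}\cdot(\text{const})$ roughly — so that Theorem \ref{thm:group}(a) also applies to $(\psi,\len_2)$, giving $\mu_{\psi,\len_2}=\be_{\psi,\len_2}$. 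Then I compare the two bridge constants directly: a bridge $\pi$ for $h_F$ on $K_\phi$ is also a bridge on $K_\psi=K_\phi$ (the bridge condition \eqref{eq:bridge} involves only $h_F$, not $\phi$ or $\len$), and if $m\le\len_1(\pi)<m+c$ then $a(\delta)^{-1}m - c\le\len_2(\pi)< a(\delta)(m+c)$, while $w_\psi(\pi)\ge a(\delta)^{-k}w_\phi(\pi)\ge a(\delta)^{-(m+c)/\lenmin}w_\phi(\pi)$. Summing over $\be_{m,c}^{\len_1}(v)$ and regrouping by $\len_2$-value, taking $m$-th roots and $m\to\oo$, yields a two-sided bound of the form
\[
a(\delta)^{-1/(\lenmin a(\delta))}\;\le\;\frac{\be_{\psi,\len_2}}{\be_{\phi,\len_1}}\;\le\;a(\delta)^{1/(\lenmin a(\delta))},
\]
after absorbing the exponent rescaling $m\mapsto\len_2(\pi)$ into the $m$-th root (the $+c$ and $-c$ shifts and the $a(\delta)^{\pm1}$ rescaling of the exponent contribute only $a(\delta)^{O(1)/\lenmin}$ in the limit). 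Since $a(\delta)\to1$ as $\delta\to0$, the right-hand side tends to $1$, and combined with $\mu=\be$ on both sides this gives $\mu_{\psi,\len_2}\to\mu_{\phi,\len_1}$, i.e.\ continuity at $(\phi,\len_1)$.

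The main obstacle is the bookkeeping around the \emph{rescaling of the exponent}: unlike the bounded-support case of Theorem \ref{thm3}, here perturbing $\len$ changes which interval $[m,m+c)$ a given SAW lands in, so the sets $\be^{\len_1}_{m,c}$ and $\be^{\len_2}_{m',c}$ are not nested and one must argue that each $\be^{\len_1}_{m,c}(v)$ is contained in a bounded union $\bigcup_{m'} \be^{\len_2}_{m',c}(v)$ over $m'$ in a window of width $O(m\delta)+O(c)$ around $a(\delta)^{\pm1}m$, and that this window-width is negligible after taking $m$-th roots. One also needs to verify carefully that $\len_2$ still satisfies \eqref{eq:cond1} (with a constant that stays finite but need not equal $C$) so that Theorem \ref{thm:group}(a) is legitimately invoked for the perturbed pair; the hypothesis $\mu_{\phi,\len}\le W$ built into $\Phi\circ\La_\eps(C,W)$ is what keeps the bridge constants bounded away from $\oo$ uniformly and makes the limits behave. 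The symmetry and summability of $\psi$, and the fact that $\supp(\psi)$ still generates $\Ga$, follow immediately from $\delta<1$ and the corresponding properties of $\phi$, so $(\psi,\len_2)\in\Phi\times\La$ without further work.
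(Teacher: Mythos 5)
Your proposal follows essentially the same route as the paper. Both arguments rest on the pointwise comparison $\Delta^{-1}<\psi(\g)/\phi(\g),\len_2(\g)/\len_1(\g)<\Delta$ with $\Delta=(1+\de)/(1-\de)$, the edge-count bound $k\le\len_1(\pi)/\len_{1,\min}$ from \eqref{lwl}, and Theorem~\ref{thm:group}(a) to identify a convergent quantity whose $m$th root gives $\mu$; you route through the bridge limit $\mu=\be$, whereas the paper's Proposition~\ref{lem:lem4} works directly with $w_\nu(\si_m)$ and \eqref{eq:sigmalim}, which avoids the extra pigeonhole/regrouping bookkeeping that bridges require, but this is a routing choice rather than a different idea. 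One point to tighten: the displayed two-sided inequality
$a(\de)^{-1/(\lenmin a(\de))}\le \be_{\psi,\len_2}/\be_{\phi,\len_1}\le a(\de)^{1/(\lenmin a(\de))}$
is not what the exponent rescaling actually yields. Rescaling the index $m\mapsto\Delta m$ produces $\be_{\phi,\len_1}^{\Delta}$ (resp.\ $\be_{\phi,\len_1}^{1/\Delta}$), not $\be_{\phi,\len_1}$ up to an $a(\de)^{O(1)}$ factor, and this is precisely where the hypothesis $\mu_{\phi,\len_1}\le W$ enters: $\bigl|\be_{\phi,\len_1}^{\Delta}-\be_{\phi,\len_1}\bigr|\le W\bigl|W^{\Delta-1}-1\bigr|\to 0$. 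Your closing remark about the role of $W$ shows you are aware of this, but attributes it to keeping limits from blowing up rather than to absorbing the factor $\be_{\phi,\len_1}^{\Delta-1}$; the paper's Proposition~\ref{lem:lem4} together with the quantity $D(\de)$ in \eqref{eq:new941} makes this step explicit and is the shape your estimate should take.
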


Section \ref{sec:pfcont} contains also a proposition concerning the effect 
on a connective constant $\mu_{\phi,\len}$ of truncating
the weight function $\phi$; see Proposition \ref{prop:trunc}. 

\section{Proofs of Theorems \ref{thm:indic} and \ref{prop:1}}\label{sec:pf1}

We prove next the aforesaid theorems together with some
results in preparation for the proof in Section \ref{sec:pf3.6} of Theorem 
\ref{thm:group}. We suppress explicit reference to $\phi$ and $\len$ except
where necessary to avoid ambiguity.

\begin{lemma}\label{l41}
Let $\Ga$ be finitely generated and let $G$ be a 
(not necessarily locally finite) Cayley graph of $\Ga$ with generator-set
$S=(s_i: i \in I)$.
There exists a locally finite Cayley graph $G'$ of $\Ga$ 
with (finite) generator-set $S'\subseteq S$.
\end{lemma}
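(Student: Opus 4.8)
Lemma \ref{l41} asks us to show that a finitely generated group $\Ga$, presented via a possibly infinite generator-set $S=(s_i:i\in I)$, already has a \emph{finite} sub-generator-set $S'\subseteq S$. The plan is completely elementary: since $\Ga$ is finitely generated, fix a finite generating set $T=\{t_1,\dots,t_k\}$. Each $t_j$ is a group element, hence can be written as a finite word in the elements of $S^{\pm 1}$ (the set $S$ together with inverses); say $t_j = s_{i(j,1)}^{\pm 1}\cdots s_{i(j,\ell_j)}^{\pm 1}$. Let $S'$ be the (finite) collection of all generators $s_i$ appearing, over all $j$, in these finitely many words. Then $S'\subseteq S$ is finite, and $\langle S'\rangle$ contains every $t_j$, hence contains $\langle T\rangle = \Ga$; so $S'$ generates $\Ga$.

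First I would make precise what ``Cayley graph with generator-set $S$'' means in this paper (the $s_i$ need not be distinct as group elements, and the index set $I$ may be infinite), and note that the construction of $S'$ above produces a \emph{finite} subfamily of the indexed family $(s_i:i\in I)$, so that $G' := (\Ga,S')$ is a well-defined Cayley graph in the same sense. Then I would observe that $G'$ is locally finite: its vertex-set is $\Ga$, and each vertex has degree at most $2|S'|<\oo$, since edges at a vertex $\g$ correspond to the elements $\g s$ and $\g s^{-1}$ for $s\in S'$. This is the only place ``locally finite'' needs checking, and it is immediate once $S'$ is finite.

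There is essentially no obstacle here — the statement is a packaging lemma, recording that one may always pass from an arbitrary (possibly non-locally-finite) Cayley graph to a locally finite one on the same group. The only point requiring a word of care is that finite generation of $\Ga$ is being \emph{used} (it is a hypothesis implicit in ``$\Ga$ is finitely generated'' in the lemma's statement), and that the chosen finite generating set $T$ need have nothing to do with $S$; the bridge between them is the standard fact that any generating set (here $S^{\pm1}$) expresses every group element, in particular each $t_j$, as a finite word. I would close by remarking that $G'$ inherits the left-multiplication action of $\Ga$ on itself, which is what is needed for the later applications (e.g.\ in constructing height functions on $G'$ and comparing them with structure on $G$).
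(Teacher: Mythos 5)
Your proof is correct and takes essentially the same approach as the paper: fix a finite generating set $T$ of $\Ga$, write each element of $T$ as a finite word in $S^{\pm 1}$, and let $S'$ be the finitely many elements of $S$ used. You are slightly more explicit than the paper about inverses and about verifying local finiteness, but the argument is the same.
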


\begin{proof}
Let $S_0$ be a  finite generator-set of $\Ga$. 
Each $s\in S_0$ can be expressed as a  finite product of the form
$s=\prod \{\psi: \psi\in T_s\}$ with $T_s \subseteq S$. 
We set $S' = \bigcup_{s\in S_0}T_s$.
\end{proof}

\begin{proof}[Proof of Theorem \ref{thm:indic}]
(a)  
Let $G$ be a Cayley graph of $\Ga$ with generator-set $S$.
By Lemma \ref{l41}, we can construct a locally finite Cayley graph 
$G'$ of $\Ga$ with 
respect to a finite set  $S'\subseteq S$ of generators.  
Since $\sH$ acts freely and is therefore unimodular,
we may apply \cite[Thm 3.4]{GL-Cayley}
to deduce that $G'$ has a \sghf\ of the form $(h,\sH)$. 
Since $G'$ is a subgraph of $G$ with the same vertex set, and $h$ is $\sH$-difference-invariant on $G'$,
it is $\sH$-difference-invariant on $G$ also.

It remains to check that each vertex $v\in \Ga$ has neighbours $u$, $w$ in $G$
with $h(u)<h(v)<h(w)$. 
This holds since there exist $s_1,s_2\in S'\subseteq S$ such that 
$h(vs_1)<h(v)<h(vs_2)$.

(b)
Let $G$ be a Cayley graph of $\Ga$ with a \sghf\ $(h,\sH)$. Then $\sH \normal \Ga$ and
$[\Ga:\sH]<\oo$. Let $\lambda=\gcd\{|h(u)|: u \in \sH,\ h(u) \ne 0\}$.
Then $\Ga$ is virtually $(\sH,F)$-indicable with $F =h/\lambda$.
\end{proof}

There follows a sufficient condition for the condition \eqref{eq:cond1} of the principal
Theorem \ref{thm:group}.

\begin{lemma}\label{l44}
Let $\Ga$ be finitely generated and $\phi\in\Phi$. Let
$G$ be the Cayley graph with generator-set $\supp(\phi)$, 
and let $G'$ be as in Lemma \ref{l41}. Write
$\de'$ for graph-distance on $G'$. Let $(h_F,\sH)$ be the \sghf\ described afterTheorem \ref{thm:indic}.
If there exists $C_1>0$ and $\eps\in[1,2)$ such that
\begin{equation}\label{eq:cond2}
\len(v) \ge  C_1 n^{1/\eps}, \qq v\in \Ga,\ \de'(\id,v)=n \ge 1,
\end{equation}
then there exists $C>0$ such that \eqref{eq:cond1} holds.
\end{lemma}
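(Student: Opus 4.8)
The plan is to prove Lemma~\ref{l44} by combining the Hölder-type estimate \eqref{eq:cond2} that relates $\len$ to the graph-distance $\de'$ on the \emph{locally finite} Cayley graph $G'$ with the Lipschitz property of the harmonic height function $h_F$ on $G'$. The key point is that $G'$ has finite vertex degrees and $(h_F,\sH)$ is a genuine \ghf\ on $G'$, so in particular the quantity $d'=d(h_F):=\max\{|h_F(x)-h_F(y)|: x\sim y \text{ in } G'\}$ of \eqref{eq:defd} is finite. Consequently, along any $G'$-path of length $n$ the height $h_F$ changes by at most $d'n$.

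First I would fix $u,v\in\Ga$ with $\phi(u^{-1}v)>0$, set $w=u^{-1}v$, and observe by the $\sH$-difference-invariance of $h_F$ (applied via left-multiplication by $u^{-1}$, which lies in $\Ga$; note that since $(h_F,\sH)$ is a \sghf\ with $\sH\normal\Ga$ and $[\Ga:\sH]<\oo$, one must be slightly careful here — $h_F$ is only $\sH$-difference-invariant, not $\Ga$-difference-invariant). So instead I would argue directly: $|h_F(u)-h_F(v)|$ is bounded in terms of the graph-distance in $G$ between $u$ and $v$, but $G$ is not locally finite, so I route through $G'$. Write $n=\de'(\id,w)$, so there is a $G'$-path from $\id$ to $w$ of length $n$, giving $|h_F(\id)-h_F(w)|\le d' n$, i.e.\ $|h_F(w)|\le d'n$ since $h_F(\id)=0$. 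Then the $\sH$-difference-invariance combined with the coset decomposition — or simply the observation that $h_F$ is defined on all of $\Ga$ and the path-comparison argument can be run between $u$ and $v$ directly using the translate of the $\id$-to-$w$ path by $u$ — yields $|h_F(u)-h_F(v)|\le d' n$.

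Next, I would invoke \eqref{eq:cond2}: since $\de'(\id,w)=n\ge 1$, we have $\len(w)\ge C_1 n^{1/\eps}$, hence $n\le (\len(w)/C_1)^\eps$. Combining, $|h_F(u)-h_F(v)|\le d' n \le d'\, C_1^{-\eps}\,[\len(u^{-1}v)]^\eps$, so \eqref{eq:cond1} holds with $C=d'\,C_1^{-\eps}$. One should also handle the trivial case: if $\phi(u^{-1}v)>0$ then $u^{-1}v\in\supp(\phi)$, which is the generator-set of $G$, so $\de'(\id,u^{-1}v)\ge 1$ unless $u=v$, and $u=v$ makes \eqref{eq:cond1} trivially true; so the case $n\ge 1$ in \eqref{eq:cond2} covers everything we need.

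The main obstacle I anticipate is the translation/invariance step: since $h_F$ is genuinely only $\sH$-difference-invariant and not $\Ga$-difference-invariant, one cannot freely translate by an arbitrary $u\in\Ga$ to reduce to the base point $\id$. The clean fix is to work with the $G'$-path directly between $u$ and $v$ rather than its translate: $\de'(u,v)=\de'(\id,u^{-1}v)=n$ because $G'$ is a Cayley graph (hence vertex-transitive under left multiplication by $\Ga$), so there is a $G'$-path $u=x_0\sim x_1\sim\cdots\sim x_n=v$, and then $|h_F(u)-h_F(v)|\le\sum_{i=1}^n|h_F(x_{i-1})-h_F(x_i)|\le d' n$ using only that consecutive $x_{i-1},x_i$ are $G'$-neighbours and $d'=d(h_F)<\oo$ on $G'$. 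This avoids the invariance issue entirely. A secondary, purely bookkeeping point is confirming $d'<\oo$, which is immediate since $G'$ is locally finite and $S'$ is finite, so $d'=\max_{s\in S'\cup (S')^{-1}}|h_F(s)|<\oo$.
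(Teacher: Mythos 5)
Your proof is correct and follows essentially the same route as the paper's: both use the finiteness of $d=d(h_F)$ on the locally finite graph $G'$ (from \eqref{eq:defd}) to get $|h_F(u)-h_F(v)|\le dn$ when $\de'(u,v)=n$, and then combine with \eqref{eq:cond2} to obtain \eqref{eq:cond1} with $C=dC_1^{-\eps}$. The worry about $\sH$- versus $\Ga$-difference-invariance you raise is resolved exactly as you suggest (summing $|h_F(x_{i-1})-h_F(x_i)|\le d$ along a $G'$-path from $u$ to $v$), which is what the paper's one-line estimate implicitly relies on.
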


\begin{proof}
Since $(h_F,\sH)$ is a \ghf\ on the locally finite graph $G'$,  by \eqref{eq:defd}
there exists $d\in(0,\oo)$ such that
\begin{equation*}
|h_F(u)-h_F(v)| \le dn, \qq u,v\in\Ga,\ \de'(u,v)=n.
\end{equation*}
Inequality \eqref{eq:cond1} follows by \eqref{eq:cond2} with $C=dC_1^{-\eps}$.
\end{proof}

\begin{lemma}\label{l46}
Let $\Gamma$ be finitely generated and virtually $\sH$-indicable, where $\sH\ne \Ga$, 
and let $\phi\in\Phi$. 
Let $K_\phi=(\Gamma,E_\phi)$ be as in \eqref{eq:Gphi}, and let $(h,\sH)$ be a \sghf\
on $K_\phi$. Let $o_i$ be a representative of the $i$th orbit of $\sH$.
There exists an integer  $s=s(h,\sH,\phi)\in \NN$ such that the following holds.
For $i \ne j$, there exists $v_j\in\sH o_j$ such that $h(o_i)<h(v_j)$, and 
a SAW $\nu_{i,j}=\nu(o_i,v_j)$ of $G'$
from $o_i$ to $v_j$ with $\len$-length $s$ or less, each of whose vertices $x$ 
other than its endvertices satisfy $h(o_i)<h(x)<h(v_j)$.
\end{lemma}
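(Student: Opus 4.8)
The plan is to mimic the construction of the constant $r=r(h,\sH)$ from the locally finite setting (given after \eqref{eq:defd}), but now measuring path length with the function $\len$ rather than with the number of edges. First I would fix once and for all a set of orbit representatives $o_1,\dots,o_M$ for the action of $\sH$ on $\Ga$, which is possible since $[\Ga:\sH]<\oo$ gives finitely many orbits. By Lemma \ref{l41} applied to the generator-set $\supp(\phi)$, there is a locally finite Cayley graph $G'$ of $\Ga$ with finite generator-set $S'\subseteq\supp(\phi)$; note every edge of $G'$ lies in $E_\phi$, so $G'$ is a subgraph of $K_\phi$ on the same vertex set. By Proposition \ref{t42} (or Theorem \ref{thm:indic}(a)), $(h,\sH)$ is a \ghf\ on $G'$ as well, since $h$ is $\sH$-difference-invariant on $\Ga$ and every vertex of $G'$ still has neighbours of strictly higher and strictly lower height, using the generators in $S'$.

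Next I would invoke the already-established locally finite theory: applying the construction after \eqref{eq:defd} to the \ghf\ $(h,\sH)$ on the locally finite graph $G'$, there is a finite integer $r=r(h,\sH)$ and, for each pair $i\ne j$, a vertex $v_j\in\sH o_j$ with $h(o_i)<h(v_j)$ together with a SAW $\nu(o_i,v_j)$ in $G'$ from $o_i$ to $v_j$ whose \emph{edge-count} is at most $r$, all of whose interior vertices $x$ satisfy $h(o_i)<h(x)<h(v_j)$. This uses only that $G'$ is connected, locally finite, and quasi-transitive under $\sH$, together with property (c) of Definition \ref{def:height}; the existence of such SAWs is part of \cite[Prop.\ 3.2]{GL-loc}. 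Since there are only finitely many orbit-pairs $(i,j)$, one may fix one such SAW for each, and these are finitely many SAWs in total.

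Finally I would convert the edge-count bound into an $\len$-length bound. Each $\nu(o_i,v_j)$ has at most $r$ edges, and each edge $e=\langle x,y\rangle$ of $G'$ satisfies $x^{-1}y\in S'\subseteq\supp(\phi)$, so its $\len$-length is $\len(e)=\len(x^{-1}y)\le \max\{\len(t):t\in S'\}=:L$, which is finite because $S'$ is finite. Hence $\len(\nu(o_i,v_j))\le rL$. Taking $s=\lceil rL\rceil$ (an integer depending only on $h$, $\sH$, and $\phi$ through $S'$ and $\len$) gives $\len$-length at most $s$ for every $\nu_{i,j}$, and setting $\nu_{i,i}=\{o_i\}$ completes the claim. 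The only mild subtlety — the part I would be most careful about — is to make sure the SAWs supplied by the locally finite construction genuinely lie in $G'$ (so that the edge-to-$\len$-length conversion applies with the finite bound $L$) and have all interior heights strictly between $h(o_i)$ and $h(v_j)$; both are guaranteed by the statement of \cite[Prop.\ 3.2]{GL-loc} applied to $(h,\sH)$ on $G'$, so no new combinatorics is needed beyond what is already in the excerpt.
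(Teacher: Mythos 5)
Your proof is correct and follows essentially the same route as the paper's: pass to the locally finite Cayley graph $G'$ of Lemma \ref{l41}, invoke \cite[Prop.\ 3.2]{GL-loc} there to obtain the SAWs $\nu_{i,j}$ with a finite edge-count bound $r$, and then use finiteness of the generator-set $S'\subseteq\supp(\phi)$ to convert the edge-count bound into a finite $\len$-length bound $s$. You are more explicit than the paper in isolating the constant $L=\max\{\len(t):t\in S'\}$ and in flagging that one must check $(h,\sH)$ is a \ghf\ on $G'$ (so that \cite[Prop.\ 3.2]{GL-loc} applies), which the paper leaves implicit; that is a worthwhile clarification, not a deviation.
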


\begin{proof}
This extends to $K_\phi$ the discussion above \eqref{eq:phimin-}.
Let $G'$ be a locally finite Cayley graph of $\Gamma$ with respect to a finite 
generator-set 
$S\subseteq \supp(\phi)$, as in Lemma \ref{l41}. By \cite[Prop.\ 3.2]{GL-loc},
there exists $0<r<\infty$ such that, for $i\ne j$, there exists $v_j\in\sH o_j$ such that
$h(o_i)<h(v_j)$, and a SAW $\nu_{i,j}=\nu(o_i,v_j)$ on $G'$ from $o_i$ to $v_j$ with 
graph-length $r$ or less. Furthermore, each of the vertices $x$ of $\nu(o_i,v_j)$
other than its endvertices satisfy $h(o_i)<h(x)<h(v_j)$.

Since $G'$ is a subgraph of $G$, a SAW on $G'$ is also a SAW on $G$. 
Moreover, since $\sH$ acts quasi-transitively,  
and every edge of $G'$ has finite $\len$-length, 
we can find $s\in (0,\infty)$ such that the claim holds.
\end{proof}

\begin{lemma}\label{prop:beta}
Let $\Ga$ be finitely generated and virtually $(\sH,F)$-indicable
with \sghf\ $h_F$, 
and let $\phi\in\Phi$ and $\len\in\La$. 
There exist $\be_{\phi,\len}\in(0,\oo)$ and $A=A(\phi,\sH, F)$ such that, for $c \ge A$,
\begin{equation}\label{eq:betaAlim}
\lim_{m\to\oo} w(\be_{m,c}(v))^{1/m} = \be_{\phi,\len}, \qq v \in \Ga.
\end{equation}
The constant $A$ is given in the forthcoming equations \eqref{eq:102}.
\end{lemma}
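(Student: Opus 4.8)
The plan is to adapt the Hammersley--Welsh-style bridge subadditivity argument to the generalized length function $\len$, following the pattern of \cite{HW62} and \cite{GL-loc} but tracking $\len$-lengths rather than edge-counts. First I would fix representatives $o_1,\dots,o_M$ of the orbits of $\sH$ on $\Ga$, and invoke Lemma \ref{l46} to obtain the connecting SAWs $\nu_{i,j}=\nu(o_i,v_j)$ of $\len$-length at most $s=s(h,\sH,\phi)$, together with the quantities $\th_{i,j}=w(\nu_{i,j})$ and $\phi_\nu=\min\{\phi(e): e\in\nu_{i,j}\}$ as in \eqref{eq:phim}, \eqref{eq:phimin-} (extended to $K_\phi$). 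The key point is that these connectors have \emph{bounded} $\len$-length, so concatenating a bridge with a connector changes the $\len$-length by a bounded amount.

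The main step is a near-supermultiplicativity inequality for $w(\be_{m,c}(v))$ in $m$. Given a bridge $\pi_1$ from $v$ with $m_1\le\len(\pi_1)<m_1+c$ ending at a vertex $x$, and a bridge $\pi_2$ from $v$ with $m_2\le\len(\pi_2)<m_2+c$, one first translates $\pi_2$ by an element of $\sH$ so that, after inserting an appropriate connector $\nu$ of $\len$-length $\le s$ between the endpoint of $\pi_1$ and the start of the translate, the concatenation $\pi_1\ast\nu\ast\pi_2'$ is again a bridge from $v$ (the $\sH$-difference-invariance of $h$ and the strict-monotonicity clause of Lemma \ref{l46} guarantee the bridge inequality \eqref{eq:bridge} is preserved across the splice). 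The $\len$-length of the result lies in $[m_1+m_2, m_1+m_2+2c+s]$, and its $\phi$-weight is at least $w(\pi_1)w(\pi_2)\th_{\min}$ where $\th_{\min}=\min_{i,j}\th_{i,j}>0$. Summing over $\pi_1\in\be_{m_1,c}(v)$ and $\pi_2\in\be_{m_2,c}(v)$, and noting that distinct pairs give distinct concatenations because a bridge can be cut at the height where $\pi_1$ ends, yields
\begin{equation*}
w\bigl(\be_{m_1+m_2,\,2c+s}(v)\bigr) \;\ge\; \th_{\min}\, w\bigl(\be_{m_1,c}(v)\bigr)\, w\bigl(\be_{m_2,c}(v)\bigr).
\end{equation*}
This is genuine supermultiplicativity once one passes to a fixed window width $c\ge A$ with $A$ chosen (as in the forthcoming \eqref{eq:102}) large enough that $2c+s\le Kc$ for a fixed $K$ and that the windows tile $[0,\oo)$ compatibly; a standard Fekete-type argument on $\log w(\be_{m,c}(v))$, together with the crude upper bound $w(\be_{m,c}(v))\le w(\si_{m,c})<\oo$ coming from summability of $\phi$ (via \eqref{eq:finite} and the counting bound \eqref{lwl}), then produces the limit $\be_{\phi,\len}:=\lim_{m\to\oo}w(\be_{m,c}(v))^{1/m}\in[0,\oo)$. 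Positivity $\be_{\phi,\len}>0$ follows because $\phi$ spans $\Ga$: one exhibits a single bridge of positive weight and bounded $\len$-length and concatenates $k$ translated copies of it to lower-bound $w(\be_{mk,c}(v))$ by a positive constant to the $k$th power.

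Finally, independence of the limit from $v$: for $v,v'$ of types $o_j,o_i$, one prepends a fixed connector $\nu(o_i,v_j)$ (suitably translated) of bounded $\len$-length to bridges from $v$ to obtain bridges from $v'$, giving $w(\be_{m+s,\,c+s}(v')) \ge \phi_\nu^{\,t}\, w(\be_{m,c}(v))$ for a bounded exponent $t$, and symmetrically; taking $m$th roots and letting $m\to\oo$ forces the two limits to coincide. I expect the main obstacle to be the bookkeeping around window widths: because $\len$-lengths need not be integers and single steps can be long, concatenation does not land exactly in the next window, so one must verify carefully that for $c\ge A$ the family $\{w(\be_{m,c}(v))\}_m$ is honestly supermultiplicative (up to a fixed multiplicative constant absorbed into the limit) rather than merely "approximately" so — this is precisely the role of the explicit threshold $A$ in \eqref{eq:102} and is where the condition $\lenmin>0$ from \eqref{eq:lenmin} is used to keep step-counts controlled.
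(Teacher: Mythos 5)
Your plan correctly identifies the overall strategy (Fekete-style argument via concatenation, orbit-matching with the connectors $\nu_{i,j}$, and a prepending argument for independence of $v$), but there is a genuine gap at the step you yourself flag as the "main obstacle": the comparison between bridge weights over windows of different widths. Your concatenation gives something like
\begin{equation*}
w\bigl(\be_{m_1+m_2,\,2c+s}(v)\bigr) \;\ge\; \thmin\, w\bigl(\be_{m_1,c}(v)\bigr)\, w\bigl(\be_{m_2,c}(v)\bigr),
\end{equation*}
but this is not supermultiplicativity of a single sequence, because the quantity on the left lives in a window of width $2c+s$, not $c$. Splitting $[m_1+m_2,\,m_1+m_2+2c+s)$ into sub-windows of width $c$ only yields a bound by a \emph{sum} of terms $w(\be_{m',c}(v))$, which is useless for Fekete unless you know the sequence has some monotonicity; and "choosing $A$ large enough that windows tile compatibly" does not by itself supply that monotonicity. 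What is missing is a padding mechanism. The paper introduces, for each $v$, an \emph{extension} edge $e_v=\langle v,v\a_v\rangle$ in a fixed locally finite Cayley subgraph $G'\subseteq K_\phi$ with $h_F(v\a_v)>h_F(v)$; quasi-transitivity of $\sH$ gives uniform bounds $a=\inf_v\len(\a_v)>0$, $A=\sup_v\lceil\len(\a_v)\rceil<\oo$, and $\psi=\inf_v\phi(\a_v)>0$. Appending $O(q/a)$ such extensions to the final vertex of a bridge pushes its $\len$-length forward by any desired amount $q$ at a controlled multiplicative cost in weight, which yields the key interpolation inequality $w(\be_{p,c}(v))\le C_1\,w(\be_{p+q,A}(v))$ (the paper's Lemma \ref{lem:inter}). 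It is this inequality that converts the width-$2A$ window back to width $A$ and produces an honest supermultiplicative relation $w(\be_{m,A})w(\be_{n,A})\le C\,w(\be_{m+n+A,A})$ on the infimum sequence, after which Fekete applies. The connectors $\nu_{i,j}$ cannot play this role: for $i=j$ they are single points and gain no height, so they cannot be used to pad a bridge whose final vertex already has the desired type.

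Two smaller remarks. First, the paper works throughout with $w(\be_{m,A}):=\inf_v w(\be_{m,A}(v))$ rather than with $w(\be_{m,c}(v))$ for a fixed $v$; this lets one concatenate a bridge ending at $x$ directly with a worst-case bridge \emph{from} $x$, so the initial supermultiplicativity step needs no connectors at all — the connectors enter only later, when transferring the limit from one starting vertex to another. Your version, concatenating two bridges from the same $v$, is salvageable but needs connectors from the outset and is slightly more awkward. Second, positivity of $\be_{\phi,\len}$ follows immediately once the interpolation lemma is in place, since the extension edges themselves are single-step bridges witnessing $w(\be_{0,A})>0$; you do not need to build and iterate a separate positive-weight bridge as you suggest.
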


In advance of the proof, we introduce some notation that will be useful later in this work.
Let $G'=(\Ga, E')$ be the  Cayley graph generated by the finite $S' \subseteq \supp(\phi)$
of Lemma \ref{l41} applied to $G:=K_\phi$. 
By Theorem \ref{thm:indic}(a) and its proof, there exists $(h_F,\sH)$
which is a \sghf\ of both $K_\phi$ and $G'$.

Since $h_F$ is a \ghf\ on $G'$,
for $v \in \Ga$, there exists an edge $e_v=\langle v,v\a_v\rangle\in E'$ 
such that $h_F(v\a_v)>h_F(v)$ 
(and also $\phi(\a_v)>0$ by definition of $G'$). We call the edge $e_v$
 the \emph{extension} at $v$.
Write
\begin{equation}\label{eq:102}
\begin{gathered}
\psi=\inf\{\phi(\a_v): v \in \Ga\},\\
a=\inf\{ \len(\a_v):v \in \Ga\},
\q  A=\sup\{\lceil \len(\a_v)\rceil:v \in \Ga\},
\end{gathered}
\end{equation}
noting that $\psi,a>0$ and $A<\oo$ since 
$h_F$ is \hdi\ and $\sH$ acts quasi-transitively.  
Let
\begin{equation}\label{eq:105}
w(\be_{m,A})=\inf\{w(\be_{m,A}(v)): v \in \Ga\}.
\end{equation}

\begin{lemma}\label{lem:inter}
For $q \ge c-A\ge 0$, there exists $C_1=C_1(\psi,q,c,a)\in (0,\oo)$ such that 
$$
w(\be_{p,c}(v)) \le C_1 w(\be_{p+q,A}(v)),\qq p \in (0,\oo),\ v\in \Ga.   
$$
\end{lemma}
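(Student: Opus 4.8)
The plan is to take a bridge $\pi \in \be_{p,c}(v)$, extend it at its top endvertex by appending some number of extension edges until the $\len$-length lands in the window $[p+q, p+q+A)$, and then bound the multiplicity of this map. First I would fix $\pi \in \be_{p,c}(v)$ with top endvertex $w = \pi_n$, so $h_F(w) = \max_i h_F(\pi_i)$ and $p \le \len(\pi) < p+c$. I repeatedly append extensions: starting from $w$, use the extension edge $e_w = \langle w, w\a_w\rangle$ with $h_F(w\a_w) > h_F(w)$; since each such step strictly increases the height, the resulting walk is still self-avoiding (the new vertex has height strictly greater than every previous vertex), and it remains a bridge since each appended vertex exceeds the previous top. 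I continue appending extensions until the total $\len$-length first enters $[p+q, p+q+a) \subseteq [p+q, p+q+A)$; this is possible because each extension adds $\len$-length between $a$ and $A$, and $\len(\pi) < p+c \le p+q+A$, so if we have not yet reached $p+q$ we can add at least one more without overshooting $p+q+A$. Call the number of appended edges $k = k(\pi)$; then $k \le (q+c)/a$, a bound depending only on $q,c,a$.

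The weight gained is $\prod_{j} \phi(\a_{\cdot})$ over the $k$ appended steps, and each such factor is at least $\psi$ by \eqref{eq:102}, so $w(\be_{p+q,A}(v)) \ge \psi^{k} w(\pi)$ for the image walk. The remaining point is the multiplicity: how many bridges $\pi \in \be_{p,c}(v)$ can map to a single walk $\pi' \in \be_{p+q+ \cdot}(v)$. Given $\pi'$ and the value $k$, the preimage $\pi$ is recovered by deleting the last $k$ edges of $\pi'$, so $\pi$ is determined by $\pi'$ together with $k$. Summing over the at most $(q+c)/a + 1$ possible values of $k$, and using $\psi \le 1$ so that $\psi^{k} \ge \psi^{(q+c)/a}$, I get
$$
w(\be_{p,c}(v)) \le \sum_{k} \psi^{-k} w\bigl(\{\pi': \pi' \text{ arises from some } \pi, k(\pi)=k\}\bigr) \le C_1 \, w(\be_{p+q,A}(v)),
$$
with $C_1 = \bigl((q+c)/a + 1\bigr)\psi^{-(q+c)/a}$, which depends only on $\psi, q, c, a$ as required.

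The main obstacle is making the extension-and-counting argument clean: I must verify that the image walk genuinely lands in $\be_{p+q,A}(v)$ (not merely in some nearby window), which forces the careful choice to stop as soon as the $\len$-length reaches $p+q$ and to check that the first such stopping value is below $p+q+A$; this uses $a \le \len(\a_v) \le \lceil \len(\a_v)\rceil \le A$ together with $\len(\pi) < p+c$ and $q \ge c - A$. I also need the self-avoidance of the extended walk, which is automatic from the strict height monotonicity of successive extension vertices, but should be stated. Everything else — the weight bound via $\psi$ and the multiplicity bound via the number of deleted edges — is routine once the stopping rule is pinned down.
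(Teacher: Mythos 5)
Your argument is correct and is essentially the paper's proof: extend a bridge $\pi\in\be_{p,c}(v)$ at its final endpoint by appending the canonical extension edges $e_v$, stopping the first time the $\len$-length reaches $p+q$, then bound the weight loss by a power of $\psi$ and the multiplicity of $\pi\mapsto\pi'$ by the number of admissible deletion counts $k$. One small slip: the stopping rule places $\len(\pi')$ in $[p+q,p+q+A)$, not in $[p+q,p+q+a)$ as you wrote (the final extension may add $\len$-length up to $A$); since you only use the containment $\subseteq[p+q,p+q+A)$, nothing actually breaks. Also, your step $\psi^{-k}\le\psi^{-(q+c)/a}$ invokes $\psi\le1$, which is not given (individual values of $\phi$ may exceed $1$); replacing $\psi$ by $\min(\psi,1)$ in $C_1$ repairs this, and the paper's stated $C_1=\lceil c/a\rceil\,\psi^{-\lceil q/a\rceil}$ carries the same implicit assumption. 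Your bounds on $k$ and on the multiplicity are looser than the paper's ($\lceil q/a\rceil$ and $\lceil c/a\rceil$ respectively), but only the existence of some finite $C_1(\psi,q,c,a)$ matters.
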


\begin{proof}
Let $\pi \in \be_{p,c}(v)$. 
We may extend $\pi$ by adding progressive extensions at the final endpoint.
After some number $r$ of such extensions, we achieve a bridge $\pi'$
contributing to  $\be_{p+q,A}$. 
Note that $w(\pi') \ge \psi^{r}w(\pi)$, and $r \le \lceil q/a \rceil$. 
Each such $\pi'$ occurs no more than $\lceil c/a\rceil$ times in this construction.
The claimed inequality holds with 
$C_1=\lceil c/a\rceil \psi^{-\lceil q/a\rceil}$.
\end{proof}

\begin{proof}[Proof of Lemma \ref{prop:beta}]
By concatenation as usual,
$$
w(\be_{m,A})w(\be_{n.A}) \le w(\be_{m+n,2A}(v)), \qq v \in \Ga.
$$
By Lemma \ref{lem:inter}, there exists $C=C(\psi,A,a)$ such that
$$
w(\be_{m,A})w(\be_{n,A}) \le C w(\be_{m+n+A,A}(v)).
$$
This holds for all $v$, whence
\begin{equation}\label{eq:101}
w(\be_{m,A})w(\be_{n,A}) \le C w(\be_{m+n+A,A}).
\end{equation}
We deduce (by subadditivity, as in \cite[eqn (8.38)]{G99}, for example) the existence of the limit 
$$
\be_{\phi,\len}=\be_{\phi,\len,A} :=\lim_{m\to\oo} w(\be_{m,A})^{1/m}.
$$
In particular, by \eqref{eq:105},
\begin{equation}\label{eq:110}
\liminf_{m\to\oo} w(\be_{m,A}(v))^{1/m} \ge \be_{\phi,\len},\qq v \in \Ga.
\end{equation}
By \eqref{eq:101},
\begin{equation}\label{eq:209}
w(\be_{m,A}) \le C^{-1}(\be_{\phi,\len})^{m+A} .
\end{equation}

We show next that \eqref{eq:betaAlim} (with $c=A$) is valid for all $v \in \Ga$.
Let $s$ be as in Lemma \ref{l46}, and 
let $x,v\in\Ga$ have types $o_i$, $o_j$, \resp, where $i \ne j$. 
Any $b_v\in \be_{m,A}(v)$ may be prolonged 
`backwards' by a translate $\a\nu(o_i,v_j)$, for suitable
$\a\in\sH$,  to obtain a bridge
$b_x $ from $x$ with weight $w(b_x) = \th_{i,j}w(b_v)$,
where $\th_{i,j}= w(\nu(o_i,v_j))$ as in \eqref{eq:phim}. By Lemma \ref{lem:inter},
there exists $C_1=C_1(\psi,A,a,s)$ such that
$$
\th_{i,j}w(\be_{m,A}(v))  \le C_1 w(\be_{m+s,A}(x).
$$
If $i=j$, this holds with $\th_{i,j}=1$.

We pick $x$ such that $w(\be_{m+s,A}(x))=w(\be_{m+s,A})$.
By \eqref{eq:105} and \eqref{eq:209}, 
equation \eqref{eq:betaAlim} follows for $v$, with $c=A$, and in addition
\begin{equation}\label{eq:208}
w(\be_{m,A}(v))\le C_2 (\be_{\phi,\len})^{m+s+A},\qq v \in \Ga,
\end{equation}
by \eqref{eq:209}, where $C_2=C_2(\psi,A, a,\thmin,s)$.

Equation \eqref{eq:betaAlim} with $c=A$ follows by \eqref{eq:110} and \eqref{eq:208}. 
When $c > A$, it suffices to note that
\begin{equation}\label{eq:216}
\be_{m,A}(v) \subseteq \be_{m,c}(v) \subseteq \bigcup\bigl\{\be_{m+i,A}(v): 0\le i \le \lceil c-A\rceil\bigr\},
\end{equation}
implying that the value of the limit $\be_{\phi,\len}$ is independent of $c>A$.
By \eqref{eq:102}, $w(\be_{0,A})>0$, and hence $\be_{\phi,\len}>0$ 
by \eqref{eq:208}.
\end{proof}

\begin{proof}[Proof of Theorem \ref{prop:1}]

(a) The second  limit in \eqref{mp} is included in Lemma \ref{prop:beta}.
That $\mu_{\phi,\len}$ is independent of the value of $c\ge A$ holds by \eqref{eq:216} with
$\si$ substituted for $\be$. 

(b)
Let $L=\lceil\lenmax\rceil < \oo$. 
The usual submultiplicative argument for SAWs yields 
\begin{align}\label{eq:567}
w(\si_{m+n,L}) &\le w(\si_{m,L})w( \si_{n-L,2L})\\
&= w(\si_{m,L})\bigl[w(\si_{n-L,L}) + w(\si_{n,L})\bigr].\nonumber
\end{align}
By \eqref{eq:567} with $m=k-L\ge 0$ and $n=L$,
$$
w(\si_{k,L}) \le w(\si_{k-L,L})\bigl[w(\si_{0,L}) + w(\si_{L,L})\bigr],
$$
which we substitute into \eqref{eq:567} with $k=n$ to obtain
$$
w(\si_{m+n,L}) \le C w(\si_{m,L})w(\si_{n-L,L}),
$$
with $C=w(\si_{0,L}) + w(\si_{L,L})$. The claim follows in the usual way with $c=L$.
As at \eqref{eq:209},
\begin{equation}\label{eq:new922}
w(\si_{m,L}) \ge C^{-1}(\mu_{\phi,\len})^{m+L}.
\end{equation}

(c) 
It is trivial that $\be_{\phi,\len} \le \mu_{\phi,\len}$, and it was noted
at the end  of the proof of part (a) that $\be_{\phi,\len}>0$.
Let $\Si_n$ be the set of $n$-step SAWs from $\id$. 
By \eqref{lwl},
\begin{equation*}
w(\sigma_{m,c})\leq \sum_{i=1}^{\lfloor(m+c)/\lenmin\rfloor}w(\Si_i).
\end{equation*}
Equation \eqref{eq:207} follows since $w(\Si_i) \leq w(\Ga)^{i}$.
\end{proof}

\begin{proposition}\label{prop2}
Assume $\phi\in\Phi$ and $\len\in\La_\eps(C,\phi)$ for $\eps \in [1,2)$
and $C>0$. 
For $u,v\in \Ga$ and any SAW $\pi$ on $K_\phi$ from $u$ to $v$,
\begin{equation*}
|h_F(u)-h_F(v)|\leq C \len(\pi)^\eps.
\end{equation*}
\end{proposition}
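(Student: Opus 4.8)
The plan is to bound $|h_F(u)-h_F(v)|$ by summing the height changes along the individual edges of $\pi$ and then applying the H\"older condition \eqref{eq:cond1} edge by edge, converting the resulting sum of $\eps$-th powers into an $\eps$-th power of a sum.

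First I would write $\pi=(\pi_0=u,\pi_1,\dots,\pi_n=v)$ with edges $e_i=\langle\pi_{i-1},\pi_i\rangle$, and observe that since $\pi$ is a SAW on $K_\phi$ we have $\phi(\pi_{i-1}^{-1}\pi_i)>0$ for each $i$, so that \eqref{eq:cond1} applies to each edge:
\begin{equation*}
|h_F(\pi_{i-1})-h_F(\pi_i)| \le C\,[\len(\pi_{i-1}^{-1}\pi_i)]^\eps = C\,\len(e_i)^\eps.
\end{equation*}
By the triangle inequality,
\begin{equation*}
|h_F(u)-h_F(v)| \le \sum_{i=1}^n |h_F(\pi_{i-1})-h_F(\pi_i)| \le C\sum_{i=1}^n \len(e_i)^\eps.
\end{equation*}

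The main step is then the elementary inequality $\sum_i x_i^\eps \le \bigl(\sum_i x_i\bigr)^\eps$ for nonnegative reals $x_i$ and $\eps\ge 1$ (superadditivity of $t\mapsto t^\eps$ on $[0,\oo)$). Applying this with $x_i=\len(e_i)\ge\lenmin>0$ gives
\begin{equation*}
\sum_{i=1}^n \len(e_i)^\eps \le \left(\sum_{i=1}^n \len(e_i)\right)^\eps = \len(\pi)^\eps,
\end{equation*}
which combined with the previous display yields $|h_F(u)-h_F(v)|\le C\len(\pi)^\eps$, as required.

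I do not expect any genuine obstacle here; the only point needing a word of care is the justification of $\sum x_i^\eps\le(\sum x_i)^\eps$ for $\eps\ge 1$, which follows since $x_i^\eps \le x_i \bigl(\sum_j x_j\bigr)^{\eps-1}$ term by term (each $x_i\le\sum_j x_j$ and $\eps-1\ge 0$), and then summing over $i$. One should also note the edge case of a trivial walk ($n=0$, $u=v$), where both sides vanish, and the fact that $\eps<2$ is not needed for this proposition — only $\eps\ge 1$ is used, the constraint $\eps<2$ becoming relevant later in the Hardy--Ramanujan step of Remark \ref{rem3}.
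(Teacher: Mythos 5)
Your proof is correct and follows essentially the same route as the paper's: triangle inequality over the edges of $\pi$, apply the H\"older condition \eqref{eq:cond1} edge by edge, and then use the superadditivity inequality $\sum_i x_i^\eps \le \bigl(\sum_i x_i\bigr)^\eps$ for $\eps\ge 1$. Your added justification of that last inequality and the observation that $\eps<2$ plays no role here are both accurate.
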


\begin{proof}
Suppose $\pi$ has vertices $u_i := u\g_1\g_2\cdots \g_i$ for $0\le i < n$.
Now,
\begin{align*}
|h_F(u)-h_F(v)| &\le \sum_{i=1}^n |h_F(u_{i+1})-h_F(u_i)|\\
&\le C\sum_{i=1}^n \len(\g_i)^\eps \qq\text{by \eqref{eq:cond1}}\\
&\le C\left(\sum_{i=1}^n \len(\g_i)\right)^\eps
= C \len(\pi)^\eps,
\end{align*}
as required.
\end{proof}

\section{Proof of Theorem \ref{thm:group}}\label{sec:pf3.6}

Let $\phi\in\Phi$ and $\len\in\La_\eps(C,\phi)$ with $\eps\in [1,2)$ and $C>0$.
With the height function $h=h_F$ given as after Theorem \ref{thm:indic}, and $A$ as in \eqref{eq:102},
we abbreviate $\be_{m,A}^{\len} = \be_m$ and $\si_{m,A}^{\len} =\si_m$.
Clearly, $\be_m \leq \sigma_m$, 
whence $\beta_{\phi,\len}\leq \mu_{\phi,\len}$. 
It suffices, therefore, to show that $\beta_{\phi,\len}\geq \mu_{\phi,\len}$.
We do this by adapting the proof of \cite[Thm 4.3]{GL-loc}; it may also be done
using the group structure of $\Ga$ directly, rather than by referring to unimodularity.
Let $H_m(v)$ be the set of half-space walks from $v$ with $\len$-lengths in the
interval $[m,m+A)$.
Theorem \ref{thm:group} will follow immediately from the following Proposition
\ref{swb4}.

We shall introduce several constants, denoted $D$, which depend on certain parameters as specified. 
The single character $D$ is used repeatedly for economy of notation; 
the value of such $D$ can vary between appearances. We recall the constants
$\thmin$, $\thmax$ of \eqref{eq:phim}, $r$ as after \eqref{eq:defd}, $s$
of Lemma \ref{l46}, and $\psi$, $A$, $a$ of \eqref{eq:102}.

\begin{proposition}\label{hsb3}
There exist constants $D=D(\psi,A,a,\thmin,\thmax,r,s)$, which are continuous functions
of $\psi$, $A$, $a$, $\thmin$, $\thmax$,  such that
$$
w(H_m(v)) \le D m^{2\eps}e^{D m^{\eps/2}}(\be_{\phi,\len})^{m+Nm^{\eps/2}}, \qq m \ge 1,
$$
where 
\begin{equation}\label{eq:210}
N= \begin{cases} D &\text{if } \be_{\phi,\len} > 1,\\
0 &\text{if } \be_{\phi,\len} \le 1.
\end{cases}
\end{equation}
\end{proposition}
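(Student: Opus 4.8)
The plan is to adapt the Hammersley--Welsh decomposition of a half-space walk into a sequence of bridges of strictly decreasing (and then strictly increasing, in the reversed part) spans, exactly as in the proof of \cite[Thm 4.3]{GL-loc}, but with two modifications forced by the present setting: first, lengths are $\len$-lengths rather than edge-counts, so the arithmetic of spans must be routed through \eqref{eq:cond1}; second, the graph $K_\phi$ is only quasi-transitive and the extension edges $e_v$ of \eqref{eq:102} (not single generators of bounded span) must be used to glue pieces together while controlling both weight and $\len$-length. Concretely, I would first record that, by Proposition \ref{prop2}, any SAW $\pi$ on $K_\phi$ of $\len$-length $\ell$ has span at most $C\ell^\eps$; applied to a half-space walk of $\len$-length in $[m,m+A)$ this gives a span bound of order $m^\eps$, and applied to the bridge pieces of the decomposition it converts a bound on $\len$-lengths into a bound on spans.

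Next I would run the standard unfolding: a half-space walk $\pi$ from $v$ is written as a concatenation $b_1 b_2 \cdots b_k$ where $b_1$ is a bridge, $b_2$ a reversed bridge with span strictly less than that of $b_1$, and so on with strictly decreasing spans $s_1 > s_2 > \cdots > s_k \ge 1$. Since all the $s_i$ are distinct positive integers bounded by the span of $\pi$, which is at most $C(m+A)^\eps$, we get $k \le \sqrt{2C}\, m^{\eps/2}(1+o(1))$, i.e. $k = O(m^{\eps/2})$, and the multiset $\{s_1,\dots,s_k\}$ is a set of distinct integers summing to at most $Cm^\eps+O(1)$; the number of such sets is the number of partitions of an integer of size $O(m^\eps)$ into distinct parts, which by Hardy--Ramanujan \cite{HR} is $\exp(O(m^{\eps/2}))$ because $\eps<2$ (this is precisely the point flagged in Remark \ref{rem3}). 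For a fixed span-sequence, the weighted count of the corresponding concatenations of (reversed) bridges factorizes, up to gluing corrections, as a product of weighted bridge counts; using $w(\be_{n,A}(v)) \le D(\be_{\phi,\len})^{n+s+A}$ from \eqref{eq:208} (and its reversed-bridge analogue, valid since $-h_F$ is also an $\sH$-difference-invariant height function) and summing the exponents gives a bound $(\be_{\phi,\len})^{m + O(k)} = (\be_{\phi,\len})^{m+O(m^{\eps/2})}$ when $\be_{\phi,\len}>1$, and $(\be_{\phi,\len})^{m}$ times a harmless constant power when $\be_{\phi,\len}\le 1$ — this is exactly the dichotomy \eqref{eq:210}. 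The polynomial prefactor $Dm^{2\eps}$ absorbs the (at most $O(m^\eps)$-many) choices of the breakpoints $\len$-lengths once the span-sequence is fixed, together with the bounded gluing overhead per piece (each gluing costs a bounded factor $\psi^{-O(1)}$ and adds bounded $\len$-length, via Lemma \ref{lem:inter} and Lemma \ref{l46}).

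The delicate bookkeeping, and the step I expect to be the main obstacle, is making the gluing precise when the pieces live in a quasi-transitive (not transitive) graph and lengths are measured by $\len$. In \cite{GL-loc} the concatenation of consecutive (reversed) bridges at a shared vertex is immediate; here the endpoint of $b_i$ and the starting vertex of $b_{i+1}$ may lie in different $\sH$-orbits, so one inserts a translate of a connecting SAW $\nu_{i,j}$ of $\len$-length at most $s$ (Lemma \ref{l46}), paying a multiplicative weight factor in $[\thmin,\thmax]$ and adding at most $s$ to the $\len$-length. Over the $k=O(m^{\eps/2})$ pieces this contributes a factor $(\thmax/\thmin)^{O(m^{\eps/2})}$ and an additive $\len$-length of $O(m^{\eps/2})$, both of which are swallowed by the $e^{Dm^{\eps/2}}$ and by replacing $m$ with $m+O(m^{\eps/2})$ in the bridge-count exponent; one must also check that after insertion the concatenation is still self-avoiding, which is arranged exactly as in \cite{GL-loc, HW62} by choosing the $\nu_{i,j}$ to respect the height constraints (all interior vertices strictly between the relevant heights) so that the pieces occupy disjoint height-bands. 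Finally, the continuity of the resulting $D$ in the parameters $\psi,A,a,\thmin,\thmax$ is immediate from the explicit form of the bounds (products of powers of these quantities and of $C_1$ from Lemma \ref{lem:inter}), and Proposition \ref{swb4} — hence $\mu_{\phi,\len}\le\be_{\phi,\len}$, hence Theorem \ref{thm:group}(a) — follows by taking $m$th roots and letting $m\to\oo$, using $w(\si_{m,A}) \le \sum_v$-type bound relating $w(\si_m)$ to $\sum_{j} w(H_j(\cdot))$ over the at most $O(m^\eps)$ possible heights of the minimum, as in \cite[proof of Thm 4.3]{GL-loc}.
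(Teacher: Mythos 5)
Your overall strategy --- the Hammersley--Welsh span decomposition of a half-space walk, a Hardy--Ramanujan bound on the number of span sequences, quasi-transitive gluing via the $\nu_{i,j}$ of Lemma \ref{l46}, and the bridge bound \eqref{eq:208} --- is the paper's strategy, and you correctly identify Proposition \ref{prop2} and $\eps<2$ as the point where \eqref{eq:cond1} enters. But the step from a fixed span-sequence to a bound on the weighted count has a genuine gap. You assert that the weighted count of half-space walks with prescribed span-sequence \lq factorizes, up to gluing corrections, as a product of weighted bridge counts', and that a polynomial prefactor of size $O(m^\eps)$ absorbs \lq the choices of the breakpoints $\len$-lengths'. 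Neither claim is correct as stated. If one really bounds $\sum_\pi w(\pi)$ by a product $\prod_i w(\be_{n_i,A}(\cdot))$ over the pieces, one must then sum over all decompositions $n_1+\cdots+n_k\approx m$ of the total $\len$-length; with $k=O(m^{\eps/2})$ pieces the number of integer compositions is of order $\binom{m}{k}=\exp\bigl(O(m^{\eps/2}\log m)\bigr)$, not polynomial. That factor is still $e^{\o(m)}$ for $\eps<2$, so your route would recover $\mu_{\phi,\len}\le\be_{\phi,\len}$, but it does not give the $e^{Dm^{\eps/2}}$ prefactor claimed in the proposition.

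The paper avoids the length-composition sum entirely, and this is the point you should make explicit. The surgery $\pi\mapsto\pi'$ maps a half-space walk with $k$ sub-bridges to one with $k-2$ sub-bridges of the \emph{same} total $\len$-length (up to an additive $O(s)$ per inserted $\nu_{i,j}$), and each surgery step is at most $(r+1)$-to-one; iterating, as in \eqref{eq:568} and \cite[Lemma 6.1]{GL-loc}, turns every half-space walk of $\len$-length $\approx m$ and span-sequence $(a_1,\dots,a_k)$ into a \emph{single} bridge of $\len$-length at most $m+(k-1)s+O(1)$, at multiplicative cost $\bigl((r+1)/\thmin\bigr)^{k-1}$. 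Once the span-sequence is fixed the breakpoints of $\pi$ are canonical (determined by $\pi$), so there is no free choice to enumerate; the only combinatorics is the Hardy--Ramanujan count of span-sequences with sum $\le M=O(m^\eps)$. Your paragraph on gluing shows you have the surgery in mind; you should make it the mechanism of the bound, replacing the product of bridge weights by a single application of \eqref{eq:208} to the fully unfolded bridge. The remainder of your proposal (the dichotomy \eqref{eq:210} according to whether $\be_{\phi,\len}\gtrless 1$, the continuity of $D$ in the parameters, and the reduction of Proposition \ref{swb4} to the half-space bound) is sound.
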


\begin{proposition}\label{swb4}
There exist constants  $D=D(\psi,A,a,\thmin,\thmax,r,s)$, which are continuous functions
of $\psi$, $A$, $a$, $\thmin$, $\thmax$, such that
$$
w(\si_m)\leq Dm^{4\eps} e^{D m^{\eps/2}}(\be_{\phi,\len})^{m+Nm^{\eps/2}}, \qq m \ge 1,
$$
where $N$ is given in \eqref{eq:210}.
\end{proposition}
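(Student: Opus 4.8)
The plan is to reduce the bound on all SAWs of $\len$-length $\approx m$ to the bound on half-space walks already supplied by Proposition \ref{hsb3}, by the classical Hammersley--Welsh decomposition of \cite{HW62}, adapted to the generalized length function as in \cite[Thm 4.3]{GL-loc}. First I would recall the standard combinatorial fact: any SAW $\pi$ from $\id$ can be cut at a vertex of minimal (or maximal) height into two half-space walks $\pi'$, $\pi''$ emanating from that common vertex, with the $\len$-lengths adding up to $\len(\pi)$ (up to the bandwidth $A$ coming from working with length-intervals rather than exact lengths). This gives a bound of the rough shape
\begin{equation*}
w(\si_m) \le \sum_{k} w\bigl(H_k(v_1)\bigr)\, w\bigl(H_{m-k}(v_2)\bigr),
\end{equation*}
where the sum ranges over $O(m^{\eps})$ values of $k$ — here is where \eqref{eq:cond1} enters, since by Proposition \ref{prop2} the span of a SAW of $\len$-length $m$ is at most $Cm^{\eps}$, and the cut point's height ranges over an interval of that size, so the number of admissible split heights (hence split lengths, after accounting for $\lenmin>0$) is polynomially bounded in $m$. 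Each factor is controlled by Proposition \ref{hsb3}.

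Next I would plug in the estimate of Proposition \ref{hsb3} for each of the two half-space factors. Writing $w(H_k(v)) \le Dk^{2\eps} e^{Dk^{\eps/2}}(\be_{\phi,\len})^{k + Nk^{\eps/2}}$ and similarly for $H_{m-k}$, the product of the two $(\be_{\phi,\len})$-powers combines to $(\be_{\phi,\len})^{m + N(k^{\eps/2}+(m-k)^{\eps/2})}$, and since $k^{\eps/2}+(m-k)^{\eps/2} \le 2 m^{\eps/2}$ (as $\eps/2 < 1$, so $x\mapsto x^{\eps/2}$ is subadditive and maximized in a controlled way), the exponent is $m + O(m^{\eps/2})$, absorbing the constant $2$ into $D$. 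The two prefactors $k^{2\eps}(m-k)^{2\eps} \le m^{4\eps}$ and $e^{Dk^{\eps/2}}e^{D(m-k)^{\eps/2}} \le e^{2Dm^{\eps/2}}$ likewise combine into $m^{4\eps}e^{Dm^{\eps/2}}$. Finally the sum over $O(m^{\eps})$ values of $k$ contributes another polynomial factor $m^{\eps}$, which is swallowed into $e^{Dm^{\eps/2}}$ (or into the polynomial prefactor by enlarging the exponent $4\eps$ slightly and adjusting $D$); one must take a little care here if $\be_{\phi,\len}\le 1$ so that $N=0$, but the polynomial and $e^{Dm^{\eps/2}}$ factors are unaffected by that dichotomy, so the same form \eqref{eq:210} for $N$ is preserved. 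Throughout, I would keep track of the dependence of each $D$ on $(\psi,A,a,\thmin,\thmax,r,s)$ and of its continuity in $(\psi,A,a,\thmin,\thmax)$, which is routine since every $D$ is built from finitely many products, powers, ceilings, and maxima of these parameters — the ceilings and the integer $r,s$ do not spoil continuity in the continuous parameters.

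The main obstacle is the bookkeeping in the Hammersley--Welsh split when working with $\len$-length intervals $[m,m+A)$ rather than exact edge-counts: the cut at the extremal-height vertex splits the SAW into two pieces whose $\len$-lengths need not each lie in a clean interval, and one of the pieces is only a \emph{reversed} half-space walk, which must be turned into a genuine half-space walk by the symmetry of $\phi$ and $\len$ (both are symmetric, so $w$ and $\len$ are reversal-invariant) together with the orbit-transfer device of Lemma \ref{l46} (the SAWs $\nu_{i,j}$, contributing the factors $\thmin,\thmax$ and the length shift $s$, needed because $h_F$ is only $\sH$-difference-invariant, not $\Ga$-invariant). Handling this requires replacing the naive split bound by one of the form $w(\si_m)\le D\sum_k w(H_{k}(v))\,w(H_{m-k+O(s)}(v'))$ with $v,v'$ ranging over orbit representatives, and then invoking $\be_{m,c}(v)\subseteq\bigcup_i\be_{m+i,A}(v)$-type inclusions \eqref{eq:216} to pass between the different interval widths; this is exactly the place where one mirrors \cite[Thm 4.3]{GL-loc}. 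Everything else — the arithmetic of exponents, the polynomial counting of split points via Proposition \ref{prop2}, and the continuity of constants — is then mechanical.
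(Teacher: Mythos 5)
Your overall plan is the right one and matches the paper's: cut the SAW at a vertex of extremal height, bound each of the two resulting half-space walks via Proposition~\ref{hsb3}, sum over the (polynomially many) ways the $\len$-length can be apportioned between the two pieces, and combine exponents using $k^{\eps/2}+(m-k)^{\eps/2}\le 2m^{\eps/2}$. The arithmetic of exponents and the absorption of polynomial factors is exactly as you say.

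However, you misdiagnose the mechanism needed to repair the ``backward'' piece. Cutting at $\pi_I$ (the last index achieving the minimum height $H$) leaves the sub-SAW from $\pi_I$ back to $\pi_0$ with heights $\ge H$ but not \emph{strictly} greater than $H$, so it fails the half-space condition. The paper does not fix this with the orbit-transfer SAWs $\nu_{i,j}$ of Lemma~\ref{l46}; those enter Proposition~\ref{hsb3} (where translated pieces must be re-attached at compatible orbit representatives), not here. Instead one prepends a single downward extension edge $e_I=\langle \pi_I\eta_{\pi_I},\pi_I\rangle$ with $h(\pi_I\eta_{\pi_I})<h(\pi_I)$, whose $\len$-length is uniformly bounded by a constant $J<\oo$ (by quasi-transitivity and symmetry of $\phi$, $\len$); the resulting walk from $\pi_I\eta_{\pi_I}$ is then a genuine half-space walk. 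Since $h_a:=\max_v w(H_a(v))$ already takes a supremum over starting vertices and Proposition~\ref{hsb3} is uniform in $v$, no orbit-matching at the cut point is required. Invoking $\nu_{i,j}$ here would not address the strictness failure and would needlessly complicate the length bookkeeping. A second, minor point: the number of terms in the split sum is governed by the window $m-2A-J\le a+b\le m+A+J$ on the integer split lengths, giving $O(m)$ pairs $(a,b)$; Proposition~\ref{prop2} and the span bound $Cm^\eps$ play no role in counting split points in this proposition (they are used inside Proposition~\ref{hsb3}). The conclusion is unaffected since $O(m)$ is still absorbed, but the attribution is off.
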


\begin{proof}[Proof of Proposition \ref{hsb3}]
Let $\pi=(\pi_0=v,\pi_1,\ldots,\pi_n)\in H_m(v)$ 
have graph-length $n$ and $\len$-length $\len(\pi)$.  
Let $n_0=0$, and for $j \ge 1$,  define $S_j=S_j(\pi)$ and $n_j=n_j(\pi)$ recursively as follows:
\begin{equation*}
S_j=\max_{n_{j-1}\leq t\leq n}(-1)^j\bigl[h(\pi_{n_{j-1}})-h(\pi_t)\bigr],
\end{equation*}
and $n_j$ is the largest value of $t$ at which the maximum is attained. 
The recursion is stopped at the smallest integer $k=k(\pi)$ such that $n_k=n$, 
so that $S_{k+1}$ and $n_{k+1}$ are undefined. 
Note that $S_1$ is the span of $\pi$ and, more generally,
$S_{j+1}$ is the span of the SAW $\ol\pi^{j+1} := (\pi_{n_j},\pi_{n_j+1},\dots,\pi_{n_{j+1}})$. 
Moreover, each of the subwalks $\ol\pi^{j+1}$ is either a bridge or
a reversed bridge. 
We observe that $S_1>S_2>\dots>S_k>0$. 

For a decreasing sequence of $k\ge 1$ positive integers $a_1>a_2>\dots>a_k>0$, 
let $B_{m,c}^v(a_1,a_2,\dots,a_k)$ be the  set of  half-space walks 
$\pi$ from $v\in \Ga$ with $\len$-length 
satisfying $\len(\pi)\in [m, m+c)$, and such that 
$k(\pi)=k$, $S_1(\pi)=a_1$, 
$\dots$, $S_k(\pi)=a_k$, and $n_k(\pi)=n$ 
(and hence $S_{k+1}$ is undefined). We abbreviate $B_{m,A}^v = B_m^v$. 
In particular, $B_m^v(p)$ is the
set of bridges $\pi$
from $v$ with span $p$ and $\len$-length $\len(\pi)\in[m,m+A)$.  

Recall the weight $\th_{i,j}$ of the SAW $\nu_{i,j}=\nu(o_i,v_j)$
as after \eqref{eq:defd}. Let $s_{i,j}$ be the $\len$-length of $\nu_{i,j}$, and
\begin{equation}\label{eq:201}
\de_{i,j}=h(v_j)-h(o_i), \qq  s =\lceil\max\{s_{i,j}\}\rceil,
\end{equation}
as in Lemma \ref{l46}.
We shall perform surgery on $\pi$ to obtain a SAW $\pi'$ satisfying
\begin{align}\label{pi'in11}
\pi' &\in 
\begin{cases} B_{m+2s_{i,j}}^v(a_1+a_2+a_3+2\de_{i,j},a_4,\dots,a_k) &\text{if } k \ge 3,\\
B_{m+s_{i,j}}^v(a_1+a_2+\de_{i,j}) &\text{if } k=2,
\end{cases}\\
w(\pi')&=\begin{cases}
\th_{i,j}^2w(\pi) &\text{if } k\ge 3,\\
\th_{i,j}w(\pi) &\text{if } k =2,
\end{cases}
\label{pi'in22}
\end{align}
for some $i$, $j$ depending on $\pi$.
Note that the subscripts in \eqref{pi'in11} may be non-integral.

\begin{figure}[htbp]
\centerline{\includegraphics[width=0.8\textwidth]{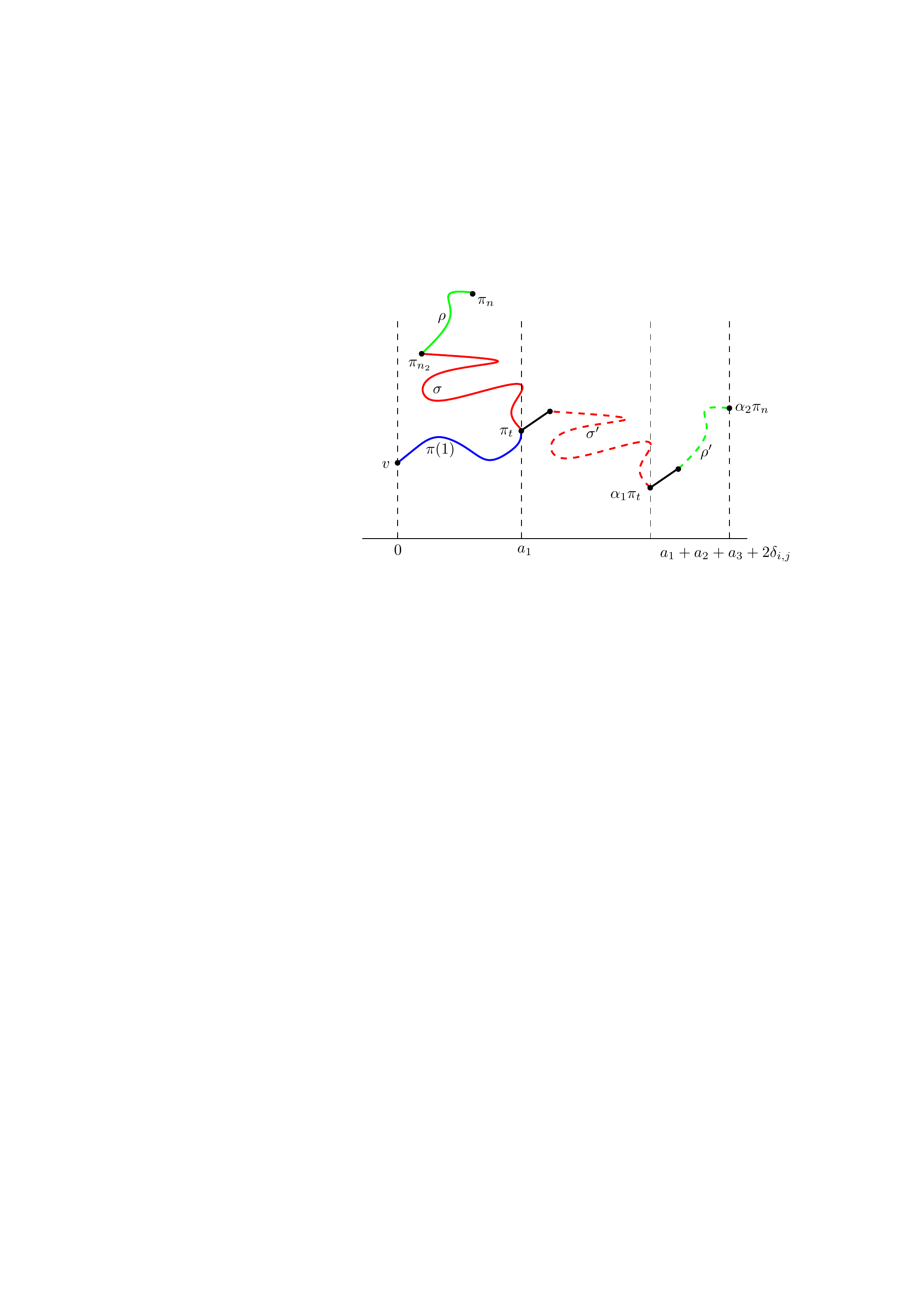}}
     \caption{Two images of $\nu_{i,j}$ are introduced   
     in order to make the required connections. These 
   extra connections are drawn as black straight-line segments, and each has weight
   $\th_{i,j}$, $\len$-length $s_{i,j}$, and span $\de_{i,j}$.} 
   \label{fig:surgery}
\end{figure}

The new SAW $\pi'$ is constructed in the following way. 
Suppose first that $k \ge 3$. In the following, we use the fact that $\sH$ acts 
on $\Ga$ by left-multiplication.
\begin{numlist}
\item Let $t=\min\{u: h(\pi_u)=a_1\}$, and let
$\pi(1)$ be the sub-SAW from $\pi_0=\id$ to the vertex $\pi_t$.
\item Let 
$\si:=(\pi_t,\dots,\pi_{n_2})$ and $\rho=(\pi_{n_2}, \dots, \pi_n)$ be the two sub-SAWs of $\pi$
with the given endvertices.  The type of $\pi_t$ (\resp, $\pi_{n_2}$)
is denoted $o_i$ (\resp, $o_j$). In particular, $\pi_t=\g o_i $ for some $\g\in\sH$.
We map the SAW $\nu_{i,j}=\nu(o_i,v_j)$, given after \eqref{eq:defd}, under $\g$ to
obtain a SAW denoted $\nu(\pi_t,\a_1\pi_{n_2})$, where 
$\a_1=\g v_j\pi_{n_2}^{-1}$ is 
the unique element of $\sH$ such that 
$\a_1 \pi_{n_2} = \g v_j  $. 
Note that $\nu(\pi_t,\a_1\pi_{n_2})$ is
 the single point $\{\pi_t\}$ if $i=j$,
and $\a_1 \pi_{n_2} =\pi_t$ in this case.

The union of the three SAWs $\pi(1)$, $\nu(\pi_t,\a_1\pi_{n_2})$, and 
$\si':=\a_1\si$ (reversed) is a SAW, denoted $\pi(2)$, from $v$ to $\a_1\pi_t$. 
Note that $ h(\a_1\pi_t)= a_1+a_2+\de_{i,j}$ where $\de_{i,j}$ is given in \eqref{eq:201}.

\item 
We next perform a similar construction 
in order to connect $\a_1\pi_t$ to an image of the first vertex $\pi_{n_2}$
of $\rho$. These two vertices have types $o_i$ and $o_j$, as before, and thus we insert
the SAW  $\nu(\a_1\pi_t,\a_2\pi_{n_2}) :=\a_1\g\nu(o_i,v_j)$, where 
$\a_2=\a_1\g v_j\pi_{n_2}^{-1}$ is the unique element of $\sH$
such that $\a_2\pi_{n_2} = \a_1\g v_j$. 
The union of the three SAWs $\pi(2)$, $\nu(\a_1\pi_t,\a_2\pi_{n_2})$, and 
$\rho':=\a_2\rho$ is a SAW denoted $\pi'$ from $\id$ to $\a_2\pi_n$.
\end{numlist}

The resulting SAW  $\pi'$ is a half-space SAW from $v$ with $\len$-length satisfying
$\len(\pi') \in [m+2s_{i,j},m +2s_{i,j}+A)$.
The maximal bridges and reversed bridges that compose $\pi'$ have 
height changes $a_1+a_2+a_3+2\de_{i,j}, a_4,\dots, a_k$. 
Note that $\pi'$ is obtained by (i) re-ordering
the steps of $\pi$, (ii) reversing certain steps, and (iii) adding paths with 
(multiplicative) aggregate $\len$-weight $\th_{i,j}^2$. 
In particular, \eqref{pi'in11}--\eqref{pi'in22} hold.
If $k=2$, the construction ceases after Step 2.

We consider next the multiplicities associated with the map $\pi\mapsto\pi'$. 
The argument in the proof of 
\cite[Sect.\ 7]{GL-loc} may not be used directly since the graph $K_\phi$ 
is not assumed locally finite.
The argument required here is however considerably 
simpler since we are working with Cayley graphs.  

We assume $k \ge 3$ (the case $k=2$ is similar).
In the above construction, we map 
$\pi=(\pi(1),\si,\rho)$ to $ \pi'=(\pi(1),\si',\rho')$ as in Figure \ref{fig:surgery},
where $\si' =\a_1\si$ and $\rho'=\a_2\rho$. Given $\pi'$ and the integers $(a_i)$,
$\pi(\id)$ is given uniquely  as 
the shortest SAW of $\pi'$ starting at $\id$ with span $a_1$.
Having determined $\pi(\id)$, there are no greater than $r+1$ possible choices for
the inserted walk $\nu_1:=\nu(\pi_t,\a_1\pi_{n_2})$, 
where  $r$ is given after \eqref{eq:defd}.
For each such choice of $\nu_1$,  we may identify $\si'$ as the longest sub-SAW 
of $\pi'$ that is a bridge starting at the second endpoint $z$, say, of $\nu_1$ and 
ending at a vertex with height $h(z)+a_2$. 
The second inserted SAW $\nu_2:=\nu(\a_1\pi_t,\a_2\pi_{n_2})$,
is simply $\a_1\nu_1$.
The SAW $\rho'$ is all that remains in $\pi'$. In conclusion, for given
$\pi'$ and $(a_i)$, there are no greater than $r+1$ admissible ways
to express $\pi'$ in the form $(\pi(\id),\si',\rho')$.

Let $(\pi(\id),\si',\rho')$ be such an admissible representation of $\pi'$.
Since $\sH$ acts by left-multiplication, there exists a unique $\a_2\in\sH$
such that $\a_2\pi_n$ equals the final endpoint of $\rho'$. It follows that 
$\rho=\a_2^{-1}\rho'$. By a similar argument, there exists a unique choice for $\si$ that 
corresponds to the given representation.

In conclusion, for given $\pi'$ and $(a_i)$, 
there are no greater than $r+1$ SAWs $\pi$ that give rise to $\pi'$.
By \eqref{pi'in11}--\eqref{pi'in22},
\begin{equation}\label{eq:568}
w(H_m(v)) \le \begin{cases}
\dfrac{r+1}{\thmin^2}
w(B_{m+2s_{i,j}}^v(a_1+a_2+a_3+2\de_{i,j},a_4,\dots,a_k)) 
    &\text{if } k \ge 3,\\
\rule{0pt}{22pt}
\dfrac{r+1}{\thmin}
w(B_{m+s_{i,j}}^v(a_1+a_2+\de_{i,j})) 
    &\text{if } k=2,
\end{cases}
\end{equation}

Since $\len\in\La_\eps(C,\phi)$ where $\eps\in [1,2)$,  by Proposition \ref{prop2} 
the span of any SAW $\pi''$ with $w(\pi'')>0$ is no greater than $C\len(\pi'')^\eps$. 
Write $\sum_a^{(k,T)}$ for the summation
over all finite integer sequences $a_1>\dots>a_k>0$ with 
length $k$ and sum $T$. 

By iteration of \eqref{eq:568}, as in \cite[Lemma 6.1]{GL-loc},
\begin{align*}
w(H_m(v))&\le \sum_{T=1}^{M} \sum_{k=1}^{\sqrt{2T}} \sum_a^{(k,T)} 
w(B_m^v (a_1,\dots,a_k))\\
&\leq \sum_{T=1}^{M} \sum_{k=1}^{\sqrt{2T}} \sum_a^{(k,T)}
  \left[\left(\frac{r+1}{\thmin}\right)^{k-1} 
\, \sum_{t=0}^{(k-1)s} w(\be_{m+t}(v))\right],
\end{align*}
where 
$$
M=C(m+s\sqrt{2M})^\eps \le C(m+Dm^{\eps/2})^\eps,
$$
and $D = D(s)$. 

Assume for the moment that $\be_{\phi,\len} > 1$;
a similar argument is valid when $\be_{\phi,\len}\le 1$. By \eqref{eq:208} and \eqref{eq:207}, 
there exist constants $D=D(\psi,A,a,\thmin,\thmax,r,s)$, which are 
continuous in $\psi$, $A$, $a$, $\thmin$, $\thmax$, such that
\begin{align*}
w(H_m(v)) &\le   
\sum_{T=1}^{M} \sum_{k=1}^{\sqrt{2T}} \sum_a^{(k,T)}  
\left(\frac{r+1}{\thmin}\right)^{k-1}Dks  (\be_{\phi,\len})^{m+ks+A} \\
&\le D\sum_{T=1}^{M} \sum_{k=1}^{\sqrt{2T}} \sum_a^{(k,T)} 
\left(\frac{r+1}{\thmin}  \right)^{\sqrt {2M}}\bigl(s\sqrt{2M}\bigr)(\be_{\phi,\len})^{m+ks+A}\\
&\le D m^{2\eps}e^{D m^{\eps/2}}(\be_{\phi,\len})^{m+Dm^{\eps/2}},
\end{align*}
as required.
\end{proof}

\begin{proof}[Proof of Proposition \ref{swb4}]
We adapt the proof of \cite[Prop.\ 6.6]{GL-loc}.
In preparation, for $u\in\Ga$, there exists $\eta_u\in\Ga$ such that
$e_u:=\langle u\eta_u,u\rangle\in E_\phi$ and $h(u\eta_u)<h(u)$.
Since $\sH$ acts quasi-transitively, we may pick the $\eta_u$ such that
$$
J:=\bigl\lceil\sup\{\len(e_u): u \in \Ga\}\bigr\rceil
$$
satisfies $J<\oo$.

Let $\pi=(\pi_0,\pi_1,\dots, \pi_n)$ be a SAW 
contributing to $\si_m(\id)$, and let $H=\min\{h(\pi_i): 0 \le i \le n\}$
and $I=\max\{i: h(\pi_i) = H\}$.
We add the edge $e_I$
to $\pi$ to obtain two half-space walks, one from $\pi_I\eta_{\pi_I}$ 
to $\pi_0$ and the other from
$\pi_I$ to $\pi_n$, with respective $\len$-lengths $l+\len(e_I)$ and $L-l$ 
for some $L \in [m,m+A)$ and $l\in[0,\len(\pi)]$.

Therefore, $h_a:= \max_v w(H_a(v))$ satisfies
\begin{equation*}
w(\si_m)\leq \sum
h_a h_b,
\end{equation*}
where the sum is over integers $a$, $b$ satisfying $l \in [a-J,a+J+A)$,
$L-l\in [b,b+A)$. By these constraints,
$$
m-2A-J \le a+b\le m+A+J.
$$
The claim follows from Proposition \ref{hsb3} as in \cite{GL-loc}.
\end{proof}

\begin{proof}[Proof of Theorem \ref{thm:group}]
That $\mu_{\phi,\len}=\be_{\phi,\len}$ 
follows immediately from Proposition \ref{swb4},
and we turn to \eqref{eq:sigmalim}. Let $c \ge A$.
Since $w(\si_{m,c}) \ge w(\be_{m,c})$, we have
$$
\liminf_{m\to\oo} w(\si_{m,c})^{1/m}\ge \be_{\phi,\len},
$$ 
and hence $w(\si_{m,c})^{1/m} \to \mu_{\phi,\len}$ as required in part (a) of the theorem.
Part (b) holds by \eqref{eq:cond1} and the definition \eqref{eq:wtprod} of the weight of a SAW.
\end{proof}

\section{Proof of Theorem \ref{thm:cont2}}\label{sec:pfcont}

We begin with a proposition, and then prove Theorem \ref{thm:cont2}. The section ends
with a proof that, as the truncation of a weight function is 
progressively removed, the connective constant converges
to its original value (see \eqref{eq:104} and Proposition \ref{prop:trunc}).

\begin{proposition}\label{lem:lem4}
\mbox{}
\begin{letlist}
\item
Let $(\phi,\len_1),(\nu,\len_2) \in\Phi \circ\La_\eps(C,W)$ for some $\eps\in[1,2)$ and $C,W>0$. If 
\begin{equation}\label{eq:211'}
\de:= \Dsup\bigl((\phi,\len_1), (\nu,\len_2)\bigr) \q\text{satisfies}\q \de\in(0,1),
\end{equation}
then
\begin{equation}\label{eq:212}
\mu_{\nu,\len_2} \le \begin{cases}
B^{\De\log\De}
(\mu_{\phi,\len_1})^{\De} &\text{if } \mu_{\phi,\len_1} > 1,\\
B^{\De\log\De}
(\mu_{\phi,\len_1})^{1/\De} &\text{if } \mu_{\phi,\len_1}\le 1,
\end{cases}
\end{equation}
where
\begin{equation}\label{eq:new949}
\De= \frac{1+\de}{1-\de},\q B=\exp\left(1/\len_{1,\min}\right),\q 
\len_{1,\min}=\min\bigl\{\len_1(\g): \g\in\supp(\phi)\bigr\}.
\end{equation}

\item If $\nu=\phi\in\Phi$, $\len_1,\len_2\in\La$,
and $\de$ is given by \eqref{eq:211'} and satisfies $\de \in (0,1)$, 
then \eqref{eq:212} holds with $B=1$.
\end{letlist}
\end{proposition}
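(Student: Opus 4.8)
The plan is to compare weighted SAW-sums for the two pairs $(\phi,\len_1)$ and $(\nu,\len_2)$ directly, edge by edge. The distance $\Dsup$ of \eqref{eq:dist3} controls, uniformly over $\g\in\supp(\phi)\cup\supp(\nu)$, both the ratio $\nu(\g)/\phi(\g)$ and the ratio $\len_2(\g)/\len_1(\g)$: from $|\tfrac{\phi-\nu}{\phi+\nu}|\le\de$ one gets $\tfrac{1-\de}{1+\de}\le \nu(\g)/\phi(\g)\le \tfrac{1+\de}{1-\de}=\De$ on the common support, and similarly $1/\De\le \len_2(\g)/\len_1(\g)\le \De$. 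First I would fix a SAW $\pi$ with $w_\nu(\pi)>0$ (so all its steps lie in $\supp(\nu)$, hence — because $\de<1$ forces $\supp(\nu)=\supp(\phi)$ — in $\supp(\phi)$ too), with $\len_2(\pi)\in[m,m+c)$ for suitable $c\ge A$. Each step of $\pi$ picks up a $\nu$-weight at most $\De$ times its $\phi$-weight, so $w_\nu(\pi)\le \De^{\,n}w_\phi(\pi)$ where $n$ is the number of edges of $\pi$; and the $\len_1$-length satisfies $\len_1(\pi)\le \De\,\len_2(\pi)< \De(m+c)$, while also $\len_1(\pi)\ge m/\De$. This exhibits $\pi$ (after possibly subdividing the range of $\len_1$-lengths into $O(\De m)$ bins of width $c$) as a member of some $\sigma^{\len_1}_{m',c}$ with $m'\le \De(m+c)$.

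The second ingredient bounds the surviving factor $\De^{\,n}$ in terms of $m$. Using \eqref{lwl} for $\len_1$, namely $n\le \len_1(\pi)/\len_{1,\min}\le \De(m+c)/\len_{1,\min}$, I get $\De^{\,n}\le \bigl(\De^{1/\len_{1,\min}}\bigr)^{\De(m+c)}$, which I would record as $B^{\,\De\log\De\,\cdot(m+c)}$ with $B=\exp(1/\len_{1,\min})$ since $\De^{1/\len_{1,\min}}=\exp(\tfrac{1}{\len_{1,\min}}\log\De)=B^{\log\De}$. Summing over the bins, $w_\nu(\sigma^{\len_2}_{m,c})\le B^{\De\log\De\,(m+c)}\sum_{m'}w_\phi(\sigma^{\len_1}_{m',c})$, the sum running over $O(\De m)$ values $m'\le\De(m+c)+c$. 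Applying the upper bound \eqref{eq:new922} (or equivalently the definition of $\mu_{\phi,\len_1}$ as the limit, valid once $\len\in\La_\eps$, with the elementary estimate $w_\phi(\sigma^{\len_1}_{m',c})\le D'(\mu_{\phi,\len_1})^{m'}$ if $\mu_{\phi,\len_1}>1$, resp.\ $\le D'(\mu_{\phi,\len_1})^{m'-c}$ if $\mu_{\phi,\len_1}\le 1$ — this is where the two cases in \eqref{eq:212} come from, since we must take the worst exponent over $m'\le \De(m+c)$ in one case and $m'\ge m/\De$ in the other), taking $m$-th roots and letting $m\to\oo$ yields \eqref{eq:212}: the polynomial prefactors and the $O(\De m)$ number of bins disappear in the root, and $B^{\De\log\De\,(m+c)/m}\to B^{\De\log\De}$. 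For part (b), when $\nu=\phi$ the factor $\De^{\,n}$ is absent (every step has identical $\phi=\nu$ weight), so the surviving distortion is purely in the length reparametrization $m'\le\De(m+c)$ versus $m'\ge m/\De$; repeating the argument gives \eqref{eq:212} with $B=1$.

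The main obstacle I anticipate is bookkeeping the length-rescaling carefully enough to land in the right $\sigma$-classes: a single step can have its $\len_1$-length as much as $\De$ times, or as little as $1/\De$ times, its $\len_2$-length, so a walk counted in $\sigma^{\len_2}_{m,c}$ is spread across $\len_1$-lengths in an interval of width $\asymp \De m$ rather than $c$, forcing the sum over $O(\De m)$ bins above and the appearance of $\De$ (not just $1+\o(1)$) in the exponent of $\mu_{\phi,\len_1}$. One must also check the degenerate edges do not matter — if a step of $\pi$ lies outside $\supp(\nu)$ then $w_\nu(\pi)=0$ and $\pi$ contributes nothing, and $\de<1$ guarantees $\supp(\phi)=\supp(\nu)$ so no genuine step is lost when passing to the $\phi$-side. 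Finally, the claim that the constants $B^{\De\log\De}$ are the sharp shape (rather than, say, $B^{c\,\De\log\De}$) comes precisely from the root-extraction killing the additive $c$ inside $(m+c)$; I would make sure the statement's $B$ matches by writing $B^{\De\log\De(m+c)/m}\to B^{\De\log\De}$ explicitly.
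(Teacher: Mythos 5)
Your proposal follows essentially the same route as the paper's proof: use $\de<1$ to get $\supp(\phi)=\supp(\nu)$ and the two-sided ratio bounds, estimate $w_\nu(\pi)/w_\phi(\pi)\le\De^n\le\exp(\len_1(\pi)\log\De/\len_{1,\min})$ via \eqref{lwl}, map $\si^{\len_2}_{m,c}$ into a window of $\len_1$-lengths $[m/\De,\De(m+c))$, sum over the $O(\De m)$ bins, take $m$th roots, and invoke \eqref{eq:sigmalim}. One small caution: \eqref{eq:new922} is a \emph{lower} bound on $w(\si_{m,L})$, not an upper bound, and the uniform estimate $w_\phi(\si^{\len_1}_{m',c})\le D'(\mu_{\phi,\len_1})^{m'}$ you mention does not follow from the limit \eqref{eq:sigmalim} alone; what is actually needed (and is what the paper uses implicitly) is the asymptotic version $w_\phi(\si^{\len_1}_{m',c})^{1/m'}\le\mu_{\phi,\len_1}+\eps'$ for $m'$ large, which suffices since the polynomially-many bins disappear under the $m$th root.
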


\begin{proof}
(a) By \eqref{eq:211'}, \eqref{eq:dist3}, and the assumption $\de<1$, we have that
\begin{gather}\label{eq:supp}
\supp(\phi)=\supp(\nu),\\
\label{eq:211}
\frac1\De < \frac{\len_1(\g)}{\len_2(\g)},\frac{\phi(\g)}{\nu(\g)} < \De, 
\qq  \g \in \supp(\phi). 
\end{gather}

Let $\pi=(\pi_0,\pi_1,\dots,\pi_n)$ be such that $w_\phi(\pi)>0$. Then
\begin{align}
\frac{w_{\nu}(\pi)}{w_{\phi}(\pi)} &= \prod_{e \in \pi} \frac {\nu(e)}{\phi(e)}
\le \prod_{e\in\pi} \De  \qq\text{by \eqref{eq:211}}\label{eq:218}\\
&= \De ^n \le \exp\left( \frac{\len_1(\pi)\log\De}{\len_{1,\min}}\right).
\nonumber
\end{align}
By  \eqref{eq:211} and \eqref{eq:218},
\begin{equation}\label{eq:214}
w_\nu(\si_m) \le \exp\left( \frac{R\log\De}{\len_{1,\min}}\right) \sum_{[L,R)} w_\phi(\pi)
\end{equation}
where the summation is over SAWs $\pi$ from $\id$ with $L\le \len_1(\pi)<R$, and 
$$
L=\frac m\De, \qq 
R=(m+A)\De.
$$
We take the $m$th root of \eqref{eq:214} and let $m\to\oo$ to 
obtain \eqref{eq:212} by \eqref{eq:sigmalim}.

(b) When $\phi=\nu\in\Phi$, the left side of \eqref{eq:218} equals $1$, 
and the exponential term in \eqref{eq:214} is replaced by $1$.
\end{proof}

\begin{proof}[Proof of Theorem \ref{thm:cont2}]

Let $\Ga$ be virtually $(\sH,F)$-indicable. Let $C,W>0$, $\eps\in[1,2)$,
and  $(\phi,\len_1)\in \Phi\circ\La_\eps(C,W)$.
We shall assume that $\mu_{\phi,\len_1} \ge 1$ (implying that $W \ge 1$); 
a similar proof is valid otherwise.
We shall show continuity at the point $(\phi,\len_1)$. 

Let $\rho>0$, and write
\begin{equation}\label{eq:new941}
D(\de):=W\Bigl| B^{\De\log\De}W^{\De-1}-1\Bigr|
 +W\Bigl| B^{\De^2\log\De}W^{\De-1}-1\Bigr|,
\end{equation}
where $B$ and $\De=\De(\de)$ are given in \eqref{eq:new949}.
Pick $\de_0 \in (0,1)$ such that
\begin{equation}\label{eq:new942}
D(\de)<\rho\qq\text{whenever} \qq \de\in(0,\de_0).
\end{equation}

Let $(\nu,\len_2) \in \Phi \circ \La_\eps(C,W)$,
write $\de := \Dsup\bigl((\phi,\len_1), (\nu,\len_2)\bigr)$, and suppose $\de<\de_0$.
We shall prove that 
\begin{equation}
\bigl|\mu_{\phi,\len_1} -\mu_{\nu,\len_2}\bigr| < \rho.\label{mm3}
\end{equation}

Note by \eqref{eq:211} that, since $\de<\de_0<1$,
\begin{equation}\label{eq:new779}
\len_{2,\min} \ge \frac{ \len_{1,\min}}\De.
\end{equation}

Assume that $\mu_{\nu,\len_2}\ge 1$; a similar proof is valid otherwise.
By \eqref{eq:new779} and Proposition \ref{lem:lem4}(a),
\begin{align}\label{eq:new908}
\mu_{\phi,\len_1}-
B^{\De\log\De}(\mu_{\phi,\len_1})^{\De}
&\leq \mu_{\phi,\len_1}-\mu_{\nu,\len_2}\\
&\leq B^{\De^2\log\De} (\mu_{\nu,\len_2})^{\De}-\mu_{\nu,\len_2}.
\nonumber
\end{align}
Therefore,
\begin{equation*}
\bigl| \mu_{\phi,\len_1}-\mu_{\nu,\len_2}\bigr| \le D(\de) < \rho,
\end{equation*}
by \eqref{eq:new941} and \eqref{eq:new942}. Inequality \eqref{mm3} is proved.
\end{proof}

Let $\phi\in \Phi$. For $\eta>0$, let $\phie:\Ga\to[0,\oo)$ be the truncated weight function
\begin{equation}\label{eq:104}
\phie(\g) = \begin{cases} \phi(\g) &\text{if } \phi(\g)\ge\eta,\\
0 &\text{otherwise},
\end{cases}
\end{equation}
and let $M=M(\phi)= \sup\{\eta>0: 
\phie \text{ spans } \Ga\}$. By Lemma \ref{l41}, $M>0$.

\begin{definition}\label{def:cts}
Let $\phi\in\Phi$ and $\len\in\La$. We say that the pair $(\phi,\len)$ 
is \emph{continuous at $0$} if $\len(\g)\to\oo$ as $\phi(\g)\to 0$, which is to say that
\begin{equation}\label{eq:ass3}
\text{$\forall K>0,\, \exists \eta>0$ such that: $\forall \g$ satisfying $\phi(\g)\in(0,\eta)$, we have $\len(\g)>K$.}
\end{equation} 
The set of such pairs $(\phi,\len)$  is denoted $\sC$.
%
\end{definition}

\begin{example}
If $\len\in\La$ is such that $\len(\g)=1/\phi(\g)$ on $\supp(\phi)$,
then $(\phi,\len)$ is continuous at $0$.
Recall Example \ref{ex:weighted}.
\end{example}

\begin{proposition}\label{prop:trunc}
Let $\Ga$ be finitely generated and virtually $(\sH,F)$-indicable, and let
$(\phi,\len)\in\Phi\circ\La_\eps(C)$ for some $C>0$ and $\eps\in [1,2)$. 
Assume in addition that \eqref{eq:ass3} holds, in that $(\phi,\len)\in\sC$. Then 
$$
\mu_{\phie,\len} \to \mu_{\phi,\len} \qq\text{as } \eta \to 0.
$$
\end{proposition}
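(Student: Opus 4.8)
The plan is to sandwich $\mu_{\phie,\len}$ between $\mu_{\phi,\len}$ and something converging to it. The easy direction is monotonicity: since $\phie\le\phi$ pointwise, Theorem~\ref{thm:group}(b) gives $\len\in\La_\eps(C,\phie)$ and $\mu_{\phie,\len}\le\mu_{\phi,\len}$ for every $\eta\in(0,M)$. So it suffices to prove $\liminf_{\eta\to0}\mu_{\phie,\len}\ge\mu_{\phi,\len}$. Equivalently, given $\rho>0$, I must produce $\eta>0$ small such that every SAW $\pi$ contributing to $w_\phi(\si_{m,c})$ is matched, up to a controlled multiplicative cost, by SAWs counted in $w_{\phie}(\si_{m',c'})$ with $m'$ not much larger than $m$; then take $m$th roots and let $m\to\oo$ using \eqref{eq:sigmalim}.

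First I would split a weighted SAW $\pi$ (with $w_\phi(\pi)>0$) at its ``long'' steps: a step $e=\langle\pi_{i-1},\pi_i\rangle$ is \emph{good} if $\phi(\pi_{i-1}^{-1}\pi_i)\ge\eta$ (so it survives truncation) and \emph{bad} otherwise. The hypothesis \eqref{eq:ass3}, that $(\phi,\len)\in\sC$, is the crucial input: it guarantees that each bad step has $\len$-length exceeding $K$, where $K=K(\eta)\to\oo$ as $\eta\to0$. Hence in a SAW of total $\len$-length in $[m,m+c)$ there are at most $(m+c)/K$ bad steps, a number that is $\o(m)$ once $\eta$ is small. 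The idea is then to replace each bad step by a detour through $\supp(\phie)$. Since $\phie$ spans $\Ga$ for $\eta<M$, its support is a generating set, so for each bad step $\g$ there is a path in $K_{\phie}$ realizing $\g$; because $S'\subseteq\supp(\phie)$ of Lemma~\ref{l41} is finite, one may choose a detour of bounded graph-length and bounded $\len$-length (say $\le R$ for a fixed constant $R=R(\eta)$), with weight bounded below by a positive constant $q=q(\eta)>0$ (the minimum of $\phie$ over the finitely many edges used). Concatenating $\pi$'s good portions with these detours yields a walk $\wt\pi$ on $K_{\phie}$ which—after pruning to a self-avoiding walk—has $\len$-length at most $m+c+(\text{number of bad steps})\cdot R$, and weight at least $w_\phi(\pi)\cdot q^{\#\text{bad steps}}$, i.e.\ at least $w_\phi(\pi)\cdot q^{(m+c)/K}$.

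The multiplicities must be controlled: several distinct $\pi$'s could collapse to the same self-avoiding $\wt\pi$. Here I would argue as in the proofs of Propositions~\ref{hsb3}--\ref{swb4}: the detours are drawn from a fixed finite family (bounded $\len$-length in $K_{\phie}$), so the number of $\pi$ mapping to a given $\wt\pi$ of bounded length is at most $D(\eta)^{\#\text{bad steps}}$ for some constant $D(\eta)$, again an $\o(m)$-exponential factor. Combining, for $\eta$ fixed and small,
$$
w_\phi(\si_{m,c}) \le D(\eta)^{(m+c)/K}\, q^{-(m+c)/K}\, w_{\phie}\bigl(\si_{m(1+R/K)+c',\,c'}\bigr),
$$
for a suitable $c'\ge A$. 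Taking $m$th roots, sending $m\to\oo$ via \eqref{eq:sigmalim} for both $\phi$ and $\phie$, and finally $\eta\to0$ (so $K\to\oo$, killing the $D/q$ factor and pushing the length-dilation $1+R/K\to1$), gives $\liminf_{\eta\to0}\mu_{\phie,\len}\ge\mu_{\phi,\len}$, as wanted. The main obstacle I anticipate is the bookkeeping in the self-avoidance pruning step—inserting detours can create self-intersections, and one must erase loops without destroying the lower bound on weight or the upper bound on length; the standard device is to note that erasing a loop only deletes edges, hence only increases the weight (weights are $\le$-controlled along the SAW since $w_\phi(\pi)>0$ means every edge weight is positive) and only decreases $\len$-length, so the estimates survive, but stating this cleanly requires a little care about which vertices get revisited.
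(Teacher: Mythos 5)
Your opening move is right (and matches the paper): monotonicity from Theorem~\ref{thm:group}(b) gives $\mu_{\phie,\len}\le\mu_{\phi,\len}$, and the whole content is the reverse inequality. But the detour/surgery construction you use for that direction has a real gap, and it also misses the much simpler mechanism that makes the $\sC$ assumption powerful.

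The gap: you claim each bad step $\g$ (one with $\phi(\g)<\eta$) can be replaced by a detour in $K_{\phie}$ of $\len$-length at most some $R=R(\eta)$ and weight at least some $q=q(\eta)>0$. Neither bound holds. A bad step has $\len(\g)>K$, but $\len(\g)$ can be as large as $m+c$, and the group element $\g$ can be arbitrarily far from $\id$ in $K_{\phie}$; the detour realizing $\g$ through the fixed finite set $S'\subseteq\supp(\phie)$ has graph-length equal to the $G'$-distance from $\id$ to $\g$, which is not uniformly bounded. There is no reason $\len$-distance in $K_{\phie}$ should be comparable to $\len(\g)$, so $R$ genuinely depends on which $\g$ occurs, not only on $\eta$. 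Similarly $q$ is not a single constant: either you allow arbitrary edges of $K_{\phie}$, in which case $\inf\phie$ over its support is $0$ (summability forces $\phi\to0$ along its support), or you restrict to $S'$, in which case the detour length is unbounded and the weight degenerates as $q_0^{(\text{detour length})}$ with the exponent growing with $m$. Either way the multiplicative penalty per bad step is not controlled uniformly in $m$, and the argument does not close. The multiplicity bookkeeping you flag as a worry is actually the lesser problem.

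What the paper does instead is much shorter and avoids detours entirely, precisely by exploiting $\sC$ in the opposite order of quantifiers from yours. Fix $\zeta\in(0,M)$ and take the $\nu_{i,j}$ and the constants $\psi,A,a,\thmin,\thmax,r,s$ from $K_{\phi^\zeta}$, so that the constants in Proposition~\ref{swb4} are the same for every $\phi^\eta$ with $\eta\le\zeta$. Then, for each fixed $m$, choose $\rho=\rho_m\in(0,\zeta)$ so small that $\phi(\g)<\rho$ forces $\len(\g)>m+A$; this is exactly \eqref{eq:ass3}. The point is that a SAW contributing to $\si_{m,A}$ has total $\len$-length less than $m+A$, so it cannot use even one such light edge. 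Hence $w_\phi(\si_{m,A})=w_{\phi^\rho}(\si_{m,A})$ with no surgery, no multiplicity factor, and no weight loss. Applying Proposition~\ref{swb4} to $\phi^\rho$ (with the $m$-independent constants), then $\be_{\phi^\rho,\len}\le\mu_{\phi^\rho,\len}\le\lim_{\eta\to0}\mu_{\phie,\len}$ by monotonicity, taking $m$th roots and letting $m\to\oo$ via \eqref{eq:sigmalim} gives $\mu_{\phi,\len}\le\lim_{\eta\to0}\mu_{\phie,\len}$. In short: you fix $\eta$ and try to control $O(m)$ bad steps as $m\to\oo$, which forces you into uncontrolled detours; the paper fixes $m$ and chooses the truncation level $\rho_m$ after $m$, so that there are \emph{zero} bad steps, and the $m$-uniformity of Proposition~\ref{swb4}'s constants is what lets one still pass to the limit.
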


\begin{proof}

Since $\len\in\La_\eps(C,\phi)$ and $\phi^\eta\le\phi$, we have by
\eqref{eq:cond1} that $\len\in\La_\eps(C,\phi^\eta)$ for $\eta<M$.
Therefore, Theorem \ref{thm:group} may be applied to the pairs $(\phi^\eta,\len)$.

Let $\zeta\in(0,M)$, so that $\phi^\zeta$ spans $\Ga$. 
Let $K_{\phi^\zeta}$ be the locally finite Cayley graph of
$\Ga$ on the edges $e$ for which $\phi^\zeta(e)>0$
(see Lemma \ref{l41}), with corresponding \sghf\ $(h,\sH)$, and let 
$\psi$, $A$, $a$ be as in \eqref{eq:102} with $\phi$ replaced by $\phi^\zeta$.
Working on the graph $K_{\phi^\zeta}$ with the weight function
$\phi^\zeta$, we construct the paths $\nu_{i,j}$ as before \eqref{eq:phimin-}, and we shall stay with these 
particular paths in the rest of the proof. Since $\phi^\eta$ is constant on these
paths for $\eta \le \zeta$,
the values of $\psi$, $A$, $a$, $\thmin$, $\thmax$, $r$, $s$ are unchanged for $\eta\le \zeta$.

Assume $\be_{\phi,\len} >1$; a similar proof holds if 
$\be_{\phi,\len} \le 1$. Take $c=A $ in Theorem \ref{thm:group}, and
let $m\in\NN$; later we shall allow $m\to\oo$. 
By \eqref{eq:ass3}, we may choose $\rho=\rho_m\in(0,\zeta)$, such that
\begin{equation}\label{eq:new777}
\text{for any $\gamma$ satisfying $\phi(\gamma)\in (0,\rho)$, we have $\len(\gamma)>m+A$.}
\end{equation}

We shall write $w_\phi$ since we shall work with more than one weight function.
By \eqref{eq:104}--\eqref{eq:new777},
\begin{align}\label{eq:217}
w_\phi(\si_{m,A}) =w_{\phi^\rho}(\si_{m,A}).
\end{align}
Since $\phi^\rho \le \phi$, we have by \eqref{eq:cond1}  
that $\len\in\La_\eps(C,\phi^\rho)$.
By Proposition \ref{swb4} applied to $\phi^\rho$, 
\begin{equation}\label{eq:2175}
w_{\phi^\rho}(\si_{m,A})\leq g_m (\be_{\phi^\rho,\len})^{m+Dm^{\eps/2}},
\end{equation}
where $g_m=Dm^{4\eps} e^{D m^{\eps/2}}$
with the constants $D=D(\psi,A,a,\thmin,\thmax,r,s)$  
given as in the proposition. Since $\be_{\phi^\rho,\len} \le \mu_{\phi^\rho,\len}$
and (by Theorem \ref{thm:group}(b)) $\mu_{\nu,\len}$ is non-decreasing in the weight function $\nu$, 
we have by \eqref{eq:217}--\eqref{eq:2175} that
\begin{equation}\label{eq:new917}
w_\phi(\si_{m,A}) \le g_m(\mu_{\phi^\rho,\len})^{m+Dm^{\eps/2}}
\le g_m \left(\lim_{\eta\to 0}\mu_{\phie,\len}\right)^{m+Dm^{\eps/2}}.
\end{equation}
Take $m$th roots and let $m\to\oo$, to obtain by \eqref{eq:sigmalim} that
$$
\mu_{\phi,\len} \le \lim_{\eta\to 0} \mu_{\phie,\len},
$$
and the result follows. 
\end{proof}

\section{Proof of Theorem \ref{thm1}}
\label{sec:pf2}

We omit this proof since it lies close to those of Theorem \ref{thm:group}
and \cite[Thm 4.3]{GL-loc}. Here is a summary of the differences. 
Theorem \ref{thm1} differs from
\cite[Thm 4.3]{GL-loc} in that edges are weighted, and this difference is handled
in very much the same manner as in the proof of Theorem \ref{thm:group}.
On the other hand, Theorem \ref{thm1}  differs from Theorem \ref{thm:group} in that
the underlying graph need not be a Cayley graph. This is handled as in
the proof of  \cite[Thm 4.3]{GL-loc}, where the unimodularity of
the automorphism groups in question is utilized. See also Remark \ref{lind1}.

\section*{Acknowledgements}
We thank Agelos Georgakopoulos for enquiring about weighted SAWs on groups.
GRG was supported in part by the EPSRC under grant EP/I03372X/1.
ZL was supported in part by the NSF under grant DMS-1608896.
Some of the work was done during a visit by GRG to Keio University, Tokyo.
The authors are grateful to two referees for their useful comments.

\providecommand{\bysame}{\leavevmode\hbox to3em{\hrulefill}\thinspace}
\providecommand{\MR}{\relax\ifhmode\unskip\space\fi MR }
\providecommand{\MRhref}[2]{%
  \href{http://www.ams.org/mathscinet-getitem?mr=#1}{#2}
}
\providecommand{\href}[2]{#2}


\begin{thebibliography}{10}

\bibitem{CMPR}
M.~Cassandro, I.~Merola, P.~Picco, and U.~Rozikov, \emph{One-dimensional
  {I}sing models with long range interactions: Cluster expansion,
  phase-separating point}, Commun. Math. Phys. \textbf{327} (2014), 951--991.

\bibitem{Chino}
Y.~Chino, \emph{Sharp transition in self-avoiding walk on random conductors on
  a tree},  (2016), \url{https://arxiv.org/abs/1606.08341}.

\bibitem{ChS}
Y.~Chino and A.~Sakai, \emph{The quenched critical point for self-avoiding walk
  on random conductors}, J. Statist. Phys. \textbf{163} (2016), 754--764.

\bibitem{DingS}
J.~Ding and A.~Sly, \emph{Distances in critical long range percolation},
  (2013), \url{https://arxiv.org/abs/1303.3995}.

\bibitem{ds}
H.~Duminil-Copin and S.~Smirnov, \emph{The connective constant of the honeycomb
  lattice equals $\sqrt{2+\sqrt 2}$}, Ann. Math. \textbf{175} (2012),
  1653--1665.

\bibitem{Glaz}
A.~Glazman, \emph{Connective constant for a weighted self-avoiding walk on
  {$\ZZ^2$}}, Elec. Commun. Probab. \textbf{20} (2015), 1--13.

\bibitem{GlazM}
A.~Glazman and I.~Manolescu, \emph{Self-avoiding walk on {$\ZZ^2$} with
  {Yang--Baxter} weights: universality of critical fugacity and $2$-point
  function},  (2017), \url{https://arxiv.org/abs/1708.00395}.

\bibitem{G99}
G.~R. Grimmett, \emph{Percolation}, 2nd ed., Springer, Berlin, 1999.

\bibitem{GL-Cayley}
G.~R. Grimmett and Z.~Li, \emph{Connective constants and height functions for
  {C}ayley graphs}, Trans. Amer. Math. Soc. \textbf{369} (2017), 5961--5980.

\bibitem{GL-amen}
\bysame, \emph{Self-avoiding walks and amenability}, Elect. J. Combin.
  \textbf{24} (2017), paper P4.38, 24 pp.

\bibitem{GrLrev2}
\bysame, \emph{Self-avoiding walks and connective constants},  (2017),
  \url{http://arxiv.org/abs/1704.05884}.

\bibitem{GL-loc}
\bysame, \emph{Locality of connective constants}, Discrete Math. \textbf{341}
  (2018), 3483--3497.

\bibitem{jmhII}
J.~M. Hammersley, \emph{Percolation processes {II. T}he connective constant},
  Proc. Camb. Phil. Soc. \textbf{53} (1957), 642--645.

\bibitem{hm}
J.~M. Hammersley and W.~Morton, \emph{Poor man's {Monte Carlo}}, J. Roy.
  Statist. Soc. B \textbf{16} (1954), 23--38.

\bibitem{HW62}
J.~M. Hammersley and D.~J.~A. Welsh, \emph{Further results on the rate of
  convergence to the connective constant of the hypercubical lattice}, Quart.
  J. Math. Oxford \textbf{13} (1962), 108--110.

\bibitem{HR}
G.~H. Hardy and S.~Ramanujan, \emph{Asymptotic formulae for the distribution of
  integers of various types}, Proc. Lond. Math. Soc. \textbf{16} (1917),
  112--132.

\bibitem{Hill94}
J.~A. Hillman, \emph{{The Algebraic Characterization of Geometric
  {$4$}-Manifolds}}, London Mathematical Society Lecture Note Series, vol. 198,
  Cambridge University Press, Cambridge, 1994.

\bibitem{Hill09}
\bysame, \emph{{Four-Manifolds, Geometries and Knots}}, Geometry and Topology
  Monographs, vol.~5, Mathematical Sciences Publishers, Berkeley, CA, 2002,
  \url{http://arxiv.org/abs/math/0212142}.

\bibitem{Lac1}
H.~Lacoin, \emph{Existence of a non-averaging regime for the self-avoiding walk
  on a high-dimensional infinite percolation cluster}, J. Statist. Phys.
  \textbf{154} (2014), 1461--1482.

\bibitem{Lac2}
\bysame, \emph{Non-coincidence of quenched and annealed connective constants on
  the supercritical planar percolation cluster}, Probab. Th. Rel. Fields
  \textbf{159} (2014), 777--808.

\bibitem{Lind19}
C.~Lindorfer, \emph{A general bridge theorem for self-avoiding walks},  (2019),
  \url{https://arxiv.org/abs/1902.08493}.

\bibitem{LyP}
R.~Lyons and Y.~Peres, \emph{{Probability on Trees and Networks}}, {Cambridge
  University Press}, Cambridge, 2016, \url{http://mypage.iu.edu/~rdlyons/}.

\bibitem{ms}
N.~Madras and G.~Slade, \emph{{Self-Avoiding Walks}}, Birkh\"auser, Boston,
  1993.

\bibitem{trof85}
V.~I. Trofimov, \emph{Automorphism groups of graphs as topological groups},
  Math. Notes \textbf{38} (1985), 717--720.

\end{thebibliography}
\end{document}